\documentclass[11pt, reqno]{amsart}
\usepackage{latexsym, amsmath, amssymb, amsthm, verbatim}
\usepackage{cases}
\usepackage[colorlinks]{hyperref}
\usepackage{hypernat}
\usepackage{xcolor} 
\usepackage{color}
\usepackage{graphicx}

\usepackage[margin=1.5in]{geometry}

\setlength{\topmargin}{-0.2in}
\setlength{\oddsidemargin}{0.3in}
\setlength{\evensidemargin}{0.3in}
\setlength{\textwidth}{5.9in}
\setlength{\rightmargin}{0.7in}
\setlength{\leftmargin}{-0.3in}
\setlength{\textheight}{9.1in}

\newtheorem{thm}{Theorem}

\newtheorem{prop}[thm]{Proposition}

\theoremstyle{remark}
\newtheorem{rmk}[thm]{Remark}


\theoremstyle{definition}

\numberwithin{thm}{section}
\numberwithin{equation}{section}




\title[Flow of Legendre curves]{\protect{Inverse curvature flow of closed Legendre curves}}

\author{Takashi Kagaya}
\address{Takashi Kagaya, Muroran Institute of Technology, Japan}
\email{kagaya@muroran-it.ac.jp}

\author{Masatomo Takahashi}
\address{Masatomo Takahashi, Muroran Institute of Technology, Japan}
\email{masatomo@muroran-it.ac.jp}



\begin{document}

\begin{abstract}
In this paper, we deal with an inverse curvature flow of $\ell$-convex Legendre curves. 
Since the Legendre curve is a natural generalization of regular curve, the flow is a generalization of the classical inverse curvature flow of regular curves. 
For the initial value problem, we study on the unique existence of the flow in global time, the monotonicity of the number of the singular cusps with respect to $t>0$ and the asymptotic behavior of the flow as $t \to \infty$. 
Regarding the asymptotics, the flow asymptotically converges to one of the self similar solutions by scaling appropriately, and the convergence is completely categorized depending on the initial curve.
\end{abstract}

\subjclass[2020]{53E10, 58K55, 53E50}
\keywords{Inverse curvature flow, Legendre curve, unique existence theory, asymptotic behavior}

\maketitle

\section{Introduction}

The flow of plane curves by functions of their curvatures have been studied. 
Typical examples of the flow are the curvature flow, or it is also so-called the curve shortening flow (cf.\ \cite{An2, GaHa, Gr} and their citations), and the inverse curvature flow (cf.\ \cite{Ge, Kr, Ur} and their citations).
Research on these flows has primarily advanced within the frameworks of regular curve (see above references), geometric measure theory (cf.\ \cite{Br} and its citations), level set (cf.\ \cite{CGG, EvSp} and their citations) and etc. 
In particular, for the curvature and inverse curvature flow, solutions starting from sufficiently smooth closed initial curve can be solved as a family of smooth regular curves due to the smoothing effect of the equations. 
The questions addressed in this paper is: what types of flow preserve ``cusp type'' singularities allowing the initial curve has singular points? 
In this paper, to answer this question, we introduce a different framework of curves from the above and present results on the unique existence theory in global time and the asymptotic behavior analysis as $t \to \infty$ for the inverse curvature flow. 
Although, for the inverse curvature flow, its framework of curves has already been introduced and studied by Li and Wang \cite{LiWa}, our results constitute an expansion of their results. 
The difference between their results and our results will be stated later. 

The Legendre curve is known as a framework of curves that allows singularities such as ``cusps.''
Therefore, we introduce the framework. 
For the convenience of the reader, we recall some notions related to the Legendre curve and refer to \cite{FuTa1, FuTa2} for more details. 
An one dimensional map $X=X(u): I \to \mathbb{R}^2$ is called a frontal if there exists a unit vector field $\nu: I \to \mathbb{S}^1$ such that 
\[ \langle \partial_u X(u), \nu(u) \rangle = 0 \quad \text{for} \; \; u \in I, \]
where $\langle \cdot, \cdot \rangle$ is the inner product on $\mathbb{R}^2$. 
In this case, the pair $(X, \nu): I \to \mathbb{R}^2 \times \mathbb{S}^1$ is called a Legendre curve. 
Let $J$ be the anticlockwise rotation operator by $\pi/2$ for 2-dimensional vectors and define $\mu := J\nu$. 
The pair $\{\nu, \mu\}$ is called a moving frame of $X$. 
Then, the Legendre curvature $(\ell, \beta)$ of the Legendre curve can be defined as 
\[ \ell := \langle \partial_u \nu, \mu \rangle, \quad \beta := \langle \partial_u X, \mu \rangle. \]
We note that the Legendre curve may have singular points and $X$ have a singular point at zero points of $\beta$. 
For example, due to $\partial_u X = \beta \mu$, if the sign of $\beta$ is changed at some point, then the direction of the parametrization $X$ and the tangent vector $\mu$ is changed at the point (see Figure \ref{figure-cusp}).  
Moreover, $\ell/|\beta|$ coincides with the classical curvature $\kappa$ at the regular points $\beta \neq 0$. 
If $\beta(u) > 0$ or $\beta(u) < 0$ for any $u \in I$, then the image of $X$ can be parametrized as a regular curve. 
The Legendre curve $(X, \nu)$ is said $\ell$-convex when $\ell(u) > 0$ or $\ell(u) < 0$ for any $u \in I$. 
Since $\ell$-convex Legendre curves can be re-parametrized so that $\ell(u)>0$ for any $u \in I$, we use the term $\ell$-convex to mean that the Legendre curves has positive $\ell$.
Let $I$ hereafter be the periodic interval with periodic $2\pi$ so that $X$ represents a closed curve, and the periodic interval will be denoted by $\mathbb{S}^1_{2\pi}$. 

\begin{figure}[t]
\centering
\includegraphics[width=4.5cm]{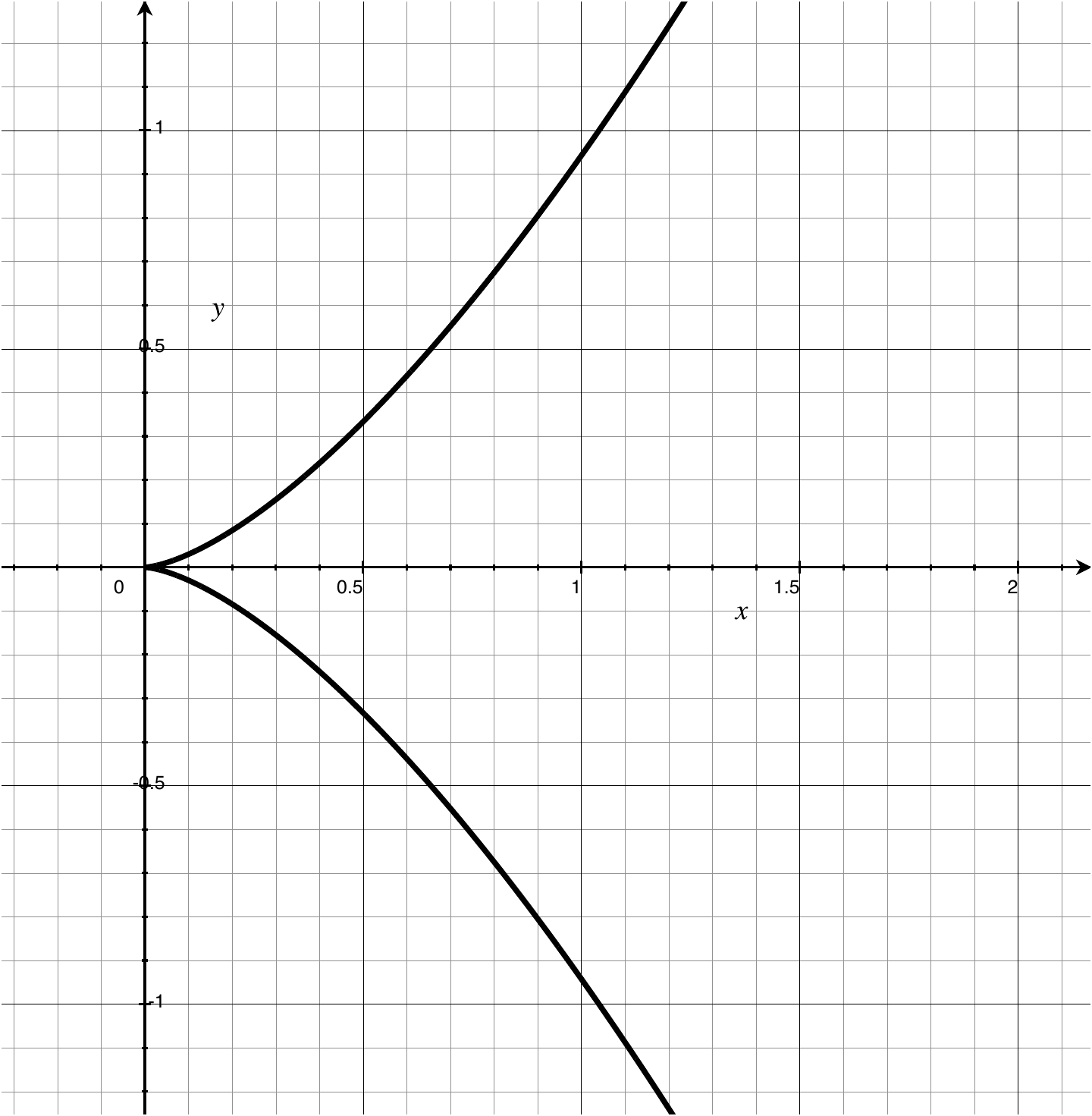}
\caption{The figure shows the image of $X$ for $(2,3)$-cusp $(X, \nu)$ defined by 
$X(u) = \begin{pmatrix}
u^2/2 \\
u^3/3
\end{pmatrix}$ and $\nu = \dfrac{1}{\sqrt{1+u^2}}\begin{pmatrix}
u \\
-1
\end{pmatrix}$. 
The Legendre curvature $(\ell, \beta)$ of this Legendre curve is $\ell(u) = 1/(1+u^2)^{3/2}, \beta(u) = u$. 
Therefore, the Legendre curve is $\ell$-convex and $\beta$ has a sign change at $u=0$.}
\label{figure-cusp}
\end{figure}

For a family of Legendre curves $(X, \nu)=(X(u,t), \nu(u,t)): \mathbb{S}^1_{2\pi} \times [0, \infty) \to \mathbb{R}^2$, we define the normal velocity $N$ and the tangent velocity $T$ as 
\[ N = \langle \partial_t X, \nu \rangle, \quad T = \langle \partial_t X, \mu \rangle. \] 
The family of Legendre curves is called an inverse curvature flow if $(X(\cdot, t), \nu(\cdot, t))$ is $\ell$-convex for any $t>0$ and $(X, \nu)$ satisfies the inverse curvature flow equation $N = \beta/\ell$. 
Notice that, if the Legendre curvature $(\ell, \beta)$ of $(X, \nu)$ satisfies $\beta \neq 0$ on $\mathbb{S}^1_{2\pi} \times (0,\infty)$, then the inverse curvature flow of Legendre curves coincides with the inverse curvature flow of regular curves. 
Our purpose is to prove the unique existence of solution in global time and to analyze the asymptotic behavior of it as $t \to \infty$. 

We first present the unique existence of the inverse curvature flow starting from given initial Legendre curve. 
In order to state the uniqueness of the flow, we introduce an equivalence for Legendre curves as follows: 
We say that two Legendre curves $(X_1, \nu_1), (X_2, \nu_2): \mathbb{S}^1_{2\pi} \to \mathbb{R}^2 \times \mathbb{S}^1$ are equivalent if and only if there exists a $C^1$-diffeomorphism $\phi: \mathbb{S}^1_{2\pi} \to \mathbb{S}^1_{2\pi}$ such that $X_2 = X_1 \circ \phi$. 
Therefore, for example, two Legendre curves $(X_1, \nu_1), (X_2, \nu_2)$ satisfying
\[ X_i(u) = \begin{pmatrix}
\cos(n_i u) \\
\sin(n_i u)
\end{pmatrix} \quad \text{for} \; \; u \in \mathbb{S}^1_{2\pi}, \; n_i \in \mathbb{N}, \; i=1, 2, \]
which describes a circle with $n_i$ revolution, are equivalent if $n_1 = n_2$ and inequivalent if $n_1 \neq n_2$. 
We also use the following functional spaces to state the regularity of the solution: 
Let $Y_1$ and $Y_2$ be norm spaces. 
For $\alpha > 0$, denote by $C(Y_1; Y_2), C^\alpha(Y_1; Y_2)$ and $C^\infty(Y_1; Y_2)$ the space of continuous, $C^\alpha$ and $C^\infty$ functions defined on $Y_1$ with values in $Y_2$, respectively. 
When $Y_2 = \mathbb{R}$, the range $Y_2$ is omitted in the above notions of the functional spaces (e.g.\ $C(Y_1)$). 
Then, the unique existence theory can be stated as follows: 

\begin{thm}\label{thm:initial problem} 
Let $\alpha \in (0, 1)$ and $(X_0, \nu_0) \in C^{1+\alpha} (\mathbb{S}^1_{2\pi}; \mathbb{R}^2 \times \mathbb{S}^1)$ be an $\ell$-convex Legendre curve such that $X_0$ does not describe a point. 
Then, an inverse curvature flow $(X, \nu) \in C([0, \infty); C^1 (\mathbb{S}^1_{2\pi}; \mathbb{R}^2 \times \mathbb{S}^1)) \cap C^\infty (\mathbb{S}^1_{2\pi} \times (0, \infty); \mathbb{R}^2 \times \mathbb{S}^1)$ starting from $(X_0, \nu_0)$ exists in global time and the flow is unique in the following sense: 
If $(\tilde{X}, \tilde{\nu}) \in C([0, \infty); C^1 (\mathbb{S}^1_{2\pi}; \mathbb{R}^2 \times \mathbb{S}^1)) \cap C^\infty (\mathbb{S}^1_{2\pi} \times (0, \infty); \mathbb{R}^2 \times \mathbb{S}^1)$ with $(\tilde{X}(\cdot, 0), \tilde{\nu}(\cdot, 0)) \in C^{1+\alpha}(\mathbb{S}^1_{2\pi}; \mathbb{R}^2 \times \mathbb{S}^1)$ is another inverse curvature flow such that $(\tilde{X}(\cdot, 0), \tilde{\nu}(\cdot, 0))$ is equivalent to $(X_0, \nu_0)$, then $(\tilde{X}(\cdot, t), \tilde{\nu}(\cdot, 0))$ is equivalent to $(X(\cdot, t), \nu(\cdot,t))$ for any $t > 0$. 
In particular, the inverse curvature flow $(X, \nu)$ is equivalent to a special inverse curvature flow $(\tilde{X}, \tilde{\nu}) \in C([0, \infty); C^1 (\mathbb{S}^1_{2\pi}; \mathbb{R}^2 \times \mathbb{S}^1)) \cap C^\infty (\mathbb{S}^1_{2\pi} \times (0, \infty); \mathbb{R}^2 \times \mathbb{S}^1)$ satisfying 
\begin{equation}\label{special inverse}
\partial_t \tilde{X} = \frac{\tilde{\beta}}{\tilde{\ell}} \tilde{\nu} + \frac{\partial_u \tilde{\beta}}{\tilde{\ell}^2} \tilde{\mu} \quad \text{on} \; \; \mathbb{S}^1_{2\pi} \times (0, \infty), 
\end{equation}
where $(\tilde{\ell}, \tilde{\beta})$ is the Legendre curvature of $(\tilde{X}, \tilde{\nu})$ and $\tilde{\mu} = J \tilde{\nu}$. 

Furthermore, letting $(\ell, \beta)$ be the Legendre curvature of $(X, \nu)$, then the zero points of $\beta(\cdot, t)$ are isolated for any $t>0$ and the number of the zero points $z(t)$ of $\beta(\cdot, t)$ is non-increasing with respect to $t > 0$. 
Moreover, if $\beta(u_0, t_0) = 0$ and $\partial_u \beta(u_0, t_0) = 0$ at $(u_0, t_0) \in \mathbb{S}^1_{2\pi} \times (0, \infty)$, then $z(t)$ is strictly decreasing at $t=t_0$, namely, $z(t_0 + \varepsilon) < z(t_0 - \delta)$ for any $\varepsilon > 0$ and $\delta > 0$. 
\end{thm}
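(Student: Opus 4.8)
The plan is to use the freedom in the tangential velocity $T$ to normalise the flow to the ``special'' form \eqref{special inverse}, for which the Legendre curvature satisfies a parabolic system, and then to feed this into standard parabolic theory and the Sturmian (Angenent) count of nodal points. \emph{Step 1 (the special gauge and the curvature system).} From the Frenet-type relations $\partial_u X=\beta\mu$, $\partial_u\nu=\ell\mu$, $\partial_u\mu=-\ell\nu$, writing $\partial_t X=N\nu+T\mu$ and $\partial_t\nu=a\mu$ and imposing $\partial_u\partial_t=\partial_t\partial_u$ on $X$ and $\nu$, one obtains the identities $\partial_t\beta=N\ell+\partial_u T$, $\ a\beta=T\ell-\partial_u N$, $\ \partial_t\ell=\partial_u a$. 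For $N=\beta/\ell$ the gauge $T=\partial_u\beta/\ell^2$ turns the middle identity into $a\beta=\beta\,\partial_u\ell/\ell^2$, so $a=\partial_u\ell/\ell^2$ (on $\{\beta\ne0\}$, hence everywhere once the zeros of $\beta$ are known to be isolated for $t>0$); consequently the curvature of a special flow obeys
\[ \partial_t\ell=\partial_u\!\Big(\frac{\partial_u\ell}{\ell^2}\Big),\qquad \partial_t\beta=\beta+\partial_u\!\Big(\frac{\partial_u\beta}{\ell^2}\Big),\qquad \partial_t\nu=\frac{\partial_u\ell}{\ell^2}\,\mu . \]
Here the $\ell$-equation is quasilinear and uniformly parabolic wherever $\ell$ is pinched between positive constants, and the $\beta$-equation is linear parabolic once $\ell$ is known; equivalently, writing $\partial_u\theta=\ell$ for the normal angle and $h=\langle X,\nu\rangle$ for the support function in the $\theta$-parametrisation, the system collapses to the single linear equation $\partial_t h=\partial_{\theta\theta}h+h$, with $\beta/\ell=h+\partial_{\theta\theta}h$ and the rotation number $\tfrac1{2\pi}\int_0^{2\pi}\ell\,du$ conserved.

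\emph{Step 2 (existence, global time, regularity).} I would solve the system on $\mathbb{S}^1_{2\pi}$ with the initial curvature $(\ell_0,\beta_0)\in C^\alpha(\mathbb{S}^1_{2\pi})$ of $(X_0,\nu_0)$. Local solvability is standard; the maximum principle for $\partial_t\ell=\ell^{-2}\partial_{uu}\ell-2\ell^{-3}(\partial_u\ell)^2$ shows $t\mapsto\min_u\ell$ is non-decreasing and $t\mapsto\max_u\ell$ non-increasing, so $0<\min\ell_0\le\ell\le\max\ell_0$ for all $t$, making the equation uniformly parabolic with bounded coefficients; De~Giorgi--Nash--Moser together with Schauder bootstrapping then give $\ell\in C^\infty(\mathbb{S}^1_{2\pi}\times(0,\infty))$, continuous down to $t=0$, and the linear $\beta$-equation inherits the same regularity, with $\beta(\cdot,t)\not\equiv0$ for every $t$ by backward uniqueness applied to $e^{-t}\beta$ (using that $X_0$ is not a point, i.e.\ $\beta_0\not\equiv0$). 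The two closing conditions survive the flow, $\tfrac{d}{dt}\int_0^{2\pi}\ell\,du=\int_0^{2\pi}\partial_u(\partial_u\ell/\ell^2)\,du=0$ keeping the rotation number integral, and $\tfrac{d}{dt}\int_0^{2\pi}\beta\mu\,du=\int_0^{2\pi}\partial_u(\partial_t X)\,du=0$ keeping $X$ closed, so $\nu$ is recovered from $\partial_u\theta=\ell$ and $X$ from $\partial_u X=\beta\mu$, each with a base-point ODE in $t$; fixing the integration constants so that the reconstruction agrees with $(X_0,\nu_0)$ at $t=0$ and checking that the result is an $\ell$-convex closed Legendre curve in the asserted class solving \eqref{special inverse} completes the existence statement.

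\emph{Step 3 (uniqueness up to equivalence, and the cusp count).} Given another inverse curvature flow $(\tilde X,\tilde\nu)$ whose initial curve is equivalent to $(X_0,\nu_0)$, I would reparametrise it into a special flow: since smoothness for $t>0$ makes $\tilde a:=\langle\partial_t\tilde\nu,\tilde\mu\rangle$ smooth, the compatibility identity gives the cancellation $\partial_u\tilde\beta/\tilde\ell^2-\tilde T=\tilde\beta\,(\partial_u\tilde\ell/\tilde\ell^3-\tilde a/\tilde\ell)$, so the diffeomorphism $\phi(\cdot,t)$ for which $(\tilde X\circ\phi,\tilde\nu\circ\phi)$ is special is governed by the quasilinear parabolic equation $\partial_t\phi=\partial_{uu}\phi/((\tilde\ell\circ\phi)^2(\partial_u\phi)^2)+(\partial_u\tilde\ell/\tilde\ell^3-\tilde a/\tilde\ell)\circ\phi$, whose coefficients remain smooth even across the moving cusps of $\tilde X$; a maximum-principle argument for $\partial_u\phi$ keeps $\phi(\cdot,t)$ a $C^1$-diffeomorphism, and with $\phi(\cdot,0)$ chosen to realise the initial equivalence, uniqueness for the quasilinear $\ell$-equation, the linear $\beta$-equation and the reconstructing ODEs forces $(\tilde X\circ\phi,\tilde\nu\circ\phi)=(X,\nu)$, i.e.\ $(\tilde X(\cdot,t),\tilde\nu(\cdot,t))$ is equivalent to $(X(\cdot,t),\nu(\cdot,t))$. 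Finally, for $t>0$ the function $\beta(\cdot,t)$ solves the linear parabolic equation $\partial_t\beta=\ell^{-2}\partial_{uu}\beta-2\ell^{-3}(\partial_u\ell)\partial_u\beta+\beta$ on $\mathbb{S}^1_{2\pi}$ with smooth coefficients and is $\not\equiv0$, so Angenent's parabolic Sturm theorem yields that its zeros are isolated for every $t>0$, that $z(t)$ is finite and non-increasing, and that $z$ drops strictly at any $t_0$ with $\beta(u_0,t_0)=\partial_u\beta(u_0,t_0)=0$ — exactly the last assertion.

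I expect the main obstacle to be Step~3: the vector field that reparametrises a general inverse curvature flow into the special one is a priori undefined at the zeros of $\beta$, and it is only the algebraic cancellation above that recasts it as a bona fide (degenerate-looking) quasilinear parabolic problem, after which one must still prove $\partial_u\phi>0$ is preserved for all time. A secondary technical point is the gradient/Hölder estimate that upgrades the pointwise bounds on $\ell$ to the uniform parabolicity in the topology needed for the stated smoothness and for continuity down to $t=0$ from merely $C^{1+\alpha}$ initial data.
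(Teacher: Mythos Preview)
Your proposal is correct and follows the same overall architecture as the paper: derive the evolution equations for $(\ell,\beta)$ in the special gauge, solve them, reconstruct the flow, reparametrise any other flow into the special one via a quasilinear parabolic equation for $\phi$, and invoke Angenent's Sturm theory for the cusp count. Your Step~3 matches the paper's Propositions~3.1--3.3 essentially line for line, including the key observation that the discrepancy $\partial_u\tilde\beta/\tilde\ell^2-\tilde T$ factors through $\tilde\beta$, so the $\phi$-equation has smooth coefficients across the cusps.

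The one genuine difference is in Step~2. You treat the $\ell$-equation as an honest quasilinear parabolic problem with general $C^\alpha$ initial data $\ell_0$, relying on the two-sided maximum-principle pinching $\min\ell_0\le\ell\le\max\ell_0$, De~Giorgi--Nash--Moser, and Schauder bootstrapping; then you reconstruct $\nu$ by integrating $\partial_u\theta=\ell$ together with a base-point ODE. The paper instead reparametrises the \emph{initial} curve once (its Proposition~A.1) so that $\ell_0\equiv n$; by uniqueness for the $\ell$-equation this forces $\ell(u,t)\equiv n$ for all time, whence $\nu$ is explicitly $(\sin(nu),-\cos(nu))$, the $\beta$-equation becomes the constant-coefficient linear equation $\partial_t\beta=n^{-2}\partial_{uu}\beta+\beta$ with an explicit Fourier solution, and the reconstruction of $X$ reduces to a time integral with no base-point ODE. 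You actually mention this route yourself (``in the $\theta$-parametrisation the system collapses to $\partial_t h=\partial_{\theta\theta}h+h$''), and that is precisely what the paper commits to. Your approach works but buys generality you do not need at the cost of heavier machinery; the paper's normalisation buys explicit formulas that it then reuses in the self-similar classification and the asymptotic analysis.
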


The properties in Theorem \ref{thm:initial problem} on the zero points of $\beta(\cdot, 0)$ yields that 
\begin{itemize}
\item $\beta$ has degenerate zero points only at finite times $t = t_0$;  
\item $\partial_u \beta \neq 0$ at the zero points of $\beta$ except the above degenerate zero points for $t > 0$. 
\end{itemize}
Therefore, the sign of $\beta$ is changed at the zero points of $\beta$ except the degenerate zero points. 
In particular, it is known that if a frontal has a point such that $\beta=0, \partial_u \beta \neq 0$ and $\ell \neq 0$, then the frontal is diffeomorphic to the $(2,3)$-cusp defined in Figure \ref{figure-cusp} near the singular point (cf.\ \cite{BrGa, FuTa3}). 
This type of singularity is called a singular point of $(2,3)$-cusp. 
Theorem \ref{thm:initial problem} also shows that the number of $(2,3)$-cusps of $(X(\cdot, t), \nu(\cdot, t))$ is non-increasing. 
On the other hand, letting $(\ell_0, \beta_0)$ be the Legendre curvature of $(X_0, \nu_0)$, although $\beta_0$ is not zero function since the initial curve does not describe a point, $\beta_0$ can be $0$ on some interval in $\mathbb{S}^1_{2\pi}$. 
In this case, the initial curve may have singular corners other than cusps on the zero regions of $\beta_0$ (see \cite[Example 3.1]{FuTa2} for an example of Legendre curve such that $\beta$ has a zero region, which describes a corner of a square). 
Theorem \ref{thm:initial problem} also yields that the inverse curvature flow is regular for $t>0$ if $\beta_0(u) \ge 0$ or $\beta_0(u) \le 0$ for $u \in \mathbb{S}^1_{2\pi}$.

Due to the equivalence in Theorem \ref{thm:initial problem}, it is sufficient to study the asymptotic behavior of the special inverse curvature flow satisfying \eqref{special inverse} to obtain the asymptotics of general inverse curvature flow. 
In order to state the asymptotic behavior result, we need to introduce self similar solutions for the inverse curvature flow. 

An inverse curvature flow $(X, \nu)$ is called a self similar solution if $X$ and $\nu$ are formed by 
\[ X(u, t) = \lambda^*(t) X^*(u), \quad \nu(u, t) = \nu^*(u) \quad \text{for} \; \; (u, t) \in \mathbb{S}^1_{2\pi} \times [0, \infty) \]
for some $\ell$-convex Legendre curve $(X^*, \nu^*): \mathbb{S}^1_{2\pi} \to \mathbb{R}^2 \times \mathbb{S}^1$ and some function $\lambda^*: [0, \infty) \to \mathbb{R}$. 
Since an $\ell$-convex Legendre curve $(X, \nu): \mathbb{S}^1_{2\pi} \to \mathbb{R}^2 \times \mathbb{S}^1$ can be re-parametrized so that 
\begin{equation}\label{n-rotation} 
\nu(u) = \begin{pmatrix}
\sin(nu) \\
-\cos(nu)
\end{pmatrix} \quad \text{for} \; \; u \in \mathbb{S}^1_{2\pi} 
\end{equation}
for some $n \in \mathbb{N}$ (see Proposition \ref{prop:re-para} for the proof), the following theorem gives a classification of self similar solutions. 

\begin{thm}\label{thm:self similar}
An inverse curvature flow $(\lambda^*(t) X^*(u), \nu^*(u))$ is a self similar solution satisfying \eqref{n-rotation} replaced $\nu$ by $\nu^*$ for some $n \in \mathbb{N}$ if and only if 
\begin{align} 
&\lambda^*(t) = e^{(1-\frac{m^2}{n^2})t} \quad \text{for} \; \; t \ge 0, \label{self-similar-lambda}\\
&\begin{aligned}X^*(u) = 
& \; C_1 \begin{pmatrix}
\frac{n}{n^2-m^2} \sin(nu)\cos(mu) - \frac{m}{n^2-m^2} \cos(nu)\sin(mu) \\
-\frac{n}{n^2-m^2} \cos(nu)\cos(mu) - \frac{m}{n^2-m^2} \sin(nu)\sin(mu)
\end{pmatrix} \\
+&\; C_2 \begin{pmatrix}
\frac{n}{n^2-m^2} \sin(nu)\sin(mu) + \frac{m}{n^2-m^2} \cos(nu)\cos(mu) \\
-\frac{n}{n^2-m^2} \cos(nu)\sin(mu) + \frac{m}{n^2-m^2} \sin(nu)\cos(mu)
\end{pmatrix} \quad \text{for} \; \; u \in \mathbb{S}^1_{2\pi},
\end{aligned} \label{self-similar-X}
\end{align}
where $m, C_1, C_2$ are constants satisfying the either of the following conditions: 
\begin{itemize}
\item[(i)] $m \in \mathbb{N} \setminus \{n\}$ and $(C_1, C_2) \in \mathbb{R}^2 \setminus \{(0, 0)\}$. 
\item[(ii)] $m = 0, C_1 \neq 0$ and $C_2=0$. 
\end{itemize}
Furthermore, if the self similar solution $(\lambda^*(t) X^*(u), \nu^*(u))$ is represented by \eqref{self-similar-lambda} and \eqref{self-similar-X}, then the Legendre curvature $(l^*, \beta^*)$ of $(X^*, \nu^*)$ satisfies
\[ \ell^*(u) = n, \quad \beta^*(u) = C_1 \cos(mu) + C_2 \sin(mu) \quad \text{for} \; \; u \in \mathbb{S}^1_{2\pi}.  \]
\end{thm}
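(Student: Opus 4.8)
The plan is to turn the self-similar ansatz into an ordinary differential equation for the Legendre curvature and then solve it under the periodicity constraint. First I would record what the normalization $\nu^*(u) = (\sin(nu), -\cos(nu))$ forces: with $\mu^* = J\nu^* = (\cos(nu), \sin(nu))$ one computes $\partial_u\nu^* = n\mu^*$ and $\partial_u\mu^* = -n\nu^*$, so the Legendre curvature automatically satisfies $\ell^* = \langle\partial_u\nu^*,\mu^*\rangle = n$, while $\partial_u X^* = \beta^*\mu^*$ with $\beta^* = \langle\partial_u X^*,\mu^*\rangle$. Since $\nu$ is time-independent along the flow, the Legendre curvature of $(X(\cdot,t),\nu(\cdot,t))$ with $X(u,t)=\lambda^*(t)X^*(u)$ is $(\ell,\beta) = (n,\lambda^*(t)\beta^*(u))$, and the normal velocity is $N = \langle\partial_t X,\nu^*\rangle = \dot\lambda^*(t)\langle X^*(u),\nu^*(u)\rangle$. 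Hence the inverse curvature flow equation $N=\beta/\ell$ becomes the separated identity $\dot\lambda^*(t)\langle X^*(u),\nu^*(u)\rangle = \lambda^*(t)\beta^*(u)/n$ for all $(u,t)$.

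Next I would eliminate the trivial configurations and separate variables. If $X^*$ is constant or $\lambda^*\equiv0$ then the flow describes a point, which is excluded; so $\beta^*\not\equiv0$ and $\lambda^*\not\equiv0$ (the smoothness of the flow for $t>0$ also makes $X^*$ smooth and $\lambda^*$ differentiable on $(0,\infty)$). Picking $u_0$ with $\beta^*(u_0)\neq0$, the separated identity forces $\langle X^*(u_0),\nu^*(u_0)\rangle\neq0$ and $\dot\lambda^* = c\lambda^*$ with $c := \beta^*(u_0)/\bigl(n\langle X^*(u_0),\nu^*(u_0)\rangle\bigr)$, hence $\lambda^*(t) = \lambda^*(0)e^{ct}$ on $[0,\infty)$; since the splitting $X=\lambda^*X^*$ is determined only up to a nonzero multiplicative constant, we may absorb $\lambda^*(0)$ into $X^*$ and take $\lambda^*(0)=1$. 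The identity then reads $\langle X^*(u),\nu^*(u)\rangle = \beta^*(u)/(cn)$ for all $u$, and $c\neq0$ since $\beta^*\not\equiv0$.

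Now I would extract the ODE for $\beta^*$ using the frame relations. Writing $X^* = a\nu^* + b\mu^*$ with $a=\langle X^*,\nu^*\rangle$, $b=\langle X^*,\mu^*\rangle$, the previous relation gives $a=\beta^*/(cn)$, while matching $\partial_u X^* = (a'-bn)\nu^* + (an+b')\mu^*$ with $\beta^*\mu^*$ yields $b=a'/n$ and $an+b'=\beta^*$; substituting gives $\beta^{*\prime\prime} = n^2(c-1)\beta^*$. Imposing $2\pi$-periodicity on $\beta^*$ then determines everything: if $c>1$ the only periodic solution is $\beta^*\equiv0$, a contradiction; if $c=1$ then $\beta^*$ is a nonzero constant $C_1$, giving $m=0$ and case (ii); if $c<1$ then $\beta^* = C_1\cos(mu)+C_2\sin(mu)$ with $m := n\sqrt{1-c}$, and periodicity forces $m\in\mathbb{N}$, with $m\neq n$ because $c\neq0$, giving case (i). In every case $c = 1-m^2/n^2$, which is \eqref{self-similar-lambda}, and $X^* = \frac{\beta^*}{cn}\nu^* + \frac{\beta^{*\prime}}{cn^2}\mu^* = \frac{n\beta^*}{n^2-m^2}\nu^* + \frac{\beta^{*\prime}}{n^2-m^2}\mu^*$; expanding this with the product-to-sum identities reproduces \eqref{self-similar-X} and the asserted form of $(\ell^*,\beta^*)$. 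The converse direction is obtained by running this computation backwards: given $m,C_1,C_2$ as in (i) or (ii), a direct check shows \eqref{self-similar-X} satisfies $\partial_u X^* = \beta^*\mu^*$ with $\beta^* = C_1\cos(mu)+C_2\sin(mu)$ and $\langle X^*,\nu^*\rangle = n\beta^*/(n^2-m^2)$, so that $(\lambda^*(t)X^*(u),\nu^*(u))$ with $\lambda^*$ as in \eqref{self-similar-lambda} is an $\ell$-convex Legendre flow solving $N=\beta/\ell$.

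The step I expect to demand the most care is the periodicity analysis of $\beta^{*\prime\prime} = n^2(c-1)\beta^*$ together with the bookkeeping of the degenerate cases: checking that $c\neq0$ (equivalently $m\neq n$), that the case $c>1$ is genuinely impossible, and that the normalization $\lambda^*(0)=1$ is legitimate. The remaining ingredients — the frame relations, the orthonormal decomposition of $X^*$, and the final trigonometric expansion matching \eqref{self-similar-X} — are routine computations.
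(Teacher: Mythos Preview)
Your argument is correct and complete; the periodicity analysis, the exclusion of $c=0$ (hence $m\neq n$), and the frame reconstruction of $X^*$ all go through as you describe.

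The route you take is genuinely different from the paper's. The paper substitutes $\ell\equiv n$, $\beta=\lambda^*\beta^*$ into the evolution equation $\partial_t\beta=\tfrac{1}{n^2}\partial_u^2\beta+\beta$ (equation \eqref{eq-flow-beta}, valid because $\nu$ is time-independent forces $T=\partial_u\beta/\ell^2$), reads off the separated ODE $(\partial_t\lambda^*)\beta^*=\lambda^*\mathcal{L}\beta^*$, and appeals to the eigenstructure of $\mathcal{L}$; it then excludes $m=n$ via the closed-curve constraint $\int\beta^*\mu^*\,du=0$, and recovers $X^*$ through the integral reconstruction formula \eqref{construct-X} from Section~\ref{sec:existence}. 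You instead work directly from the normal-velocity identity $N=\beta/\ell$ and the orthogonal decomposition $X^*=a\nu^*+b\mu^*$, which yields the second-order ODE for $\beta^*$ and the explicit expression for $X^*$ simultaneously; the exclusion $m\neq n$ falls out of $c\neq 0$ without invoking closedness. Your approach is more self-contained (no reliance on the PDE machinery of Section~\ref{sec:geometric-eq} or the construction in Proposition~\ref{prop:special-flow}), while the paper's approach has the advantage of reusing exactly the tools already built for the existence and asymptotic theory.
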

We note that, in the case (ii), $X^*$ can be re-written as 
\[ X^*(u) = \frac{C_1}{n} \begin{pmatrix}
\sin (nu) \\
-\cos (nu)
\end{pmatrix} \quad \text{for} \; \; u \in \mathbb{S}^1_{2\pi}, \]
and thus $X^*$ describes a circle with $n$ revolution. 
Another remark is the constants $C_1$ and $C_2$ do not affect to the number of the singular cusps of the ``image'' of $X^*$ (see Remark \ref{rmk:self similar} for details). 
Some figures of the image of $X^*$ are listed at the end of Section \ref{sec:self similar}. 
From the form of $\lambda^*$ in \eqref{self-similar-lambda}, the self similar solution shrinks to a point when $m > n$ and expands into the distance when $m < n$.
In order to state that the flow obtained in Theorem \ref{thm:initial problem} asymptotically converges to which self similar solution as $t \to \infty$, we denote by $(X^*_{n, m, C_1, C_2}, \nu^*_n)$ and $\lambda^*_{n, m}$ the profile Legendre curve $(X^*, \nu^*)$ and scaling function $\lambda^*$ which can be constructed from $n, m, C_1$ and $C_2$ as in Theorem \ref{thm:self similar}. 
We then present a convergence result of the inverse curvature flow as follows: 

\begin{thm}\label{thm:asymptotics}
Let $\alpha \in (0,1)$, $(X_0, \nu_0) \in C^{1+\alpha} (\mathbb{S}^1_{2\pi}; \mathbb{R}^2 \times \mathbb{S}^1)$ be an $\ell$-convex Legendre curve, such that $X_0$ does not describe a point, satisfying \eqref{n-rotation} for some $n \in \mathbb{N}$, and $(\ell_0, \beta_0)$ be the Legendre curvature of $(X_0, \nu_0)$. 
Let $(X, \nu)$ be the special inverse curvature flow satisfying \eqref{special inverse} and starting from $(X_0, \nu_0)$ obtained in Theorem \ref{thm:initial problem}. 
Let also $\{a_k\}_{k \in \mathbb{N} \cup \{0\}}$ and $\{b_k\}_{k \in \mathbb{N}}$ be respectively the family of the Fourier cosine coefficients and Fourier sine coefficients of $\beta_0$, namely, $a_0$, $a_k$ and $b_k$ are given by 
\begin{align*}
a_0 = \frac{1}{2\pi} \int_{\mathbb{S}^1_{2\pi}} \beta_0 (u) \; du, \quad a_k = \int_{\mathbb{S}^1_{2\pi}} \beta_0(u) \cos(ku) \; du, \quad b_k = \int_{\mathbb{S}^1_{2\pi}} \beta_0(u) \sin(ku) \; du
\end{align*}
for $k \in \mathbb{N}$ and $\beta_0$ can be decomposed by 
\[ \beta_0(u) = a_0 + \left(\sum_{k=1}^\infty a_k \cos(ku) + b_k \sin(ku) \right) \quad \text{for} \; \; u \in \mathbb{S}^1_{2\pi}. \]
Then, the following properties hold: 
\begin{itemize}
\item[(i)] It holds that $a_n = b_n = 0$. 
\item[(ii)] If $a_0 \neq 0$, then there exists $p \in \mathbb{R}^2$ such that 
\[ \frac{X(\cdot, t) - p}{\lambda^*_{n, 0}(t)} = \frac{X(\cdot, t) - p}{e^t} \to X^*_{n, 0, a_0, 0} \quad \text{in} \; \; C^\infty(\mathbb{S}^1_{2\pi}) \; \; \text{as} \; \; t \to \infty. \]
\item[(iii)] If $a_0 = 0$ and there exists $m \in \mathbb{N} \setminus \{n\}$ such that 
\begin{equation}\label{as-fourier-coe} 
a_k = b_k = 0 \quad \text{for} \; \; k=1, 2, \cdots, m-1, \quad (a_m, b_m) \neq (0,0), 
\end{equation}
then there exists $p \in \mathbb{R}^2$ such that 
\[ \frac{X(\cdot, t) - p}{\lambda_{n, m}^*(t)} = \frac{X(\cdot, t) - p}{e^{(1-\frac{m^2}{n^2})t}} \to X^*_{n, m, a_m, b_m} \quad \text{in} \; \; C^\infty(\mathbb{S}^1_{2\pi}) \; \; \text{as} \; \; t \to \infty. \]
\end{itemize}
\end{thm}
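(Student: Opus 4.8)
The plan is to show that the flow in the statement has an explicit Fourier representation and then to extract the limit from it. First I would build the solution by hand: let $\beta$ solve the linear equation $\partial_t\beta=\tfrac1{n^2}\partial_u^2\beta+\beta$ on $\mathbb S^1_{2\pi}\times(0,\infty)$ with $\beta(\cdot,0)=\beta_0$, and define $X$ by integrating $\partial_tX=\tfrac{\beta}{n}\nu_0+\tfrac{\partial_u\beta}{n^2}\mu_0$ in time from $X(\cdot,0)=X_0$. Using $\partial_u\nu_0=n\mu_0$ and $\partial_u\mu_0=-n\nu_0$ one checks $\partial_t(\partial_uX-\beta\mu_0)\equiv0$; since $\partial_uX_0=\beta_0\mu_0$, this gives $\partial_uX=\beta\mu_0$ for all $t$, so $(X,\nu_0)$ is an $\ell$-convex Legendre curve with Legendre curvature $(n,\beta)$ solving \eqref{special inverse}, and by the uniqueness part of Theorem \ref{thm:initial problem} it is the flow in the statement; in particular \eqref{n-rotation} persists in time. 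Identifying $\mathbb R^2\cong\mathbb C$ so that $\mu_0=e^{inu}$ and $\nu_0=-ie^{inu}$, and writing $\beta_0=\sum_{k}\hat\beta_k(0)e^{iku}$ for the complex Fourier coefficients, we obtain $\beta(u,t)=\sum_k\hat\beta_k(0)\,e^{(1-k^2/n^2)t}e^{iku}$.

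Next I would compute $X$ mode by mode. From $\partial_uX=\beta\,e^{inu}$ one reads $ij\,\hat X_j(t)=\hat\beta_{j-n}(t)$, so
\[
\hat X_j(t)=\frac{\hat\beta_{j-n}(0)}{ij}\,e^{\frac{j(2n-j)}{n^2}t}\qquad(j\neq0),
\]
where I used the identity $1-\tfrac{(j-n)^2}{n^2}=\tfrac{j(2n-j)}{n^2}$; and since $\partial_tX=e^{inu}\big(-\tfrac{i\beta}{n}+\tfrac{\partial_u\beta}{n^2}\big)$, the mean $\hat X_0(t)=\tfrac1{2\pi}\int_{\mathbb S^1_{2\pi}}X(\cdot,t)$ evolves by $\tfrac{d}{dt}\hat X_0=-\tfrac{2i}{n}\hat\beta_{-n}(t)$. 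Closedness of $X_0$ reads $\int_{\mathbb S^1_{2\pi}}\partial_uX_0=\int_{\mathbb S^1_{2\pi}}\beta_0\mu_0=0$, i.e.\ $(a_n,b_n)=(0,0)$ --- this is assertion (i); it also gives $\hat\beta_{\pm n}(t)\equiv0$, whence $\tfrac{d}{dt}\hat X_0\equiv0$ and the barycenter is conserved: $\hat X_0(t)\equiv p:=\tfrac1{2\pi}\int_{\mathbb S^1_{2\pi}}X_0$. Therefore $X(u,t)=p+\sum_{j\neq0}\tfrac{\hat\beta_{j-n}(0)}{ij}e^{\frac{j(2n-j)}{n^2}t}e^{iju}$, and over the ``active'' indices (those $j$ with $\hat\beta_{j-n}(0)\neq0$) the growth rate $\tfrac{j(2n-j)}{n^2}=1-\tfrac{(j-n)^2}{n^2}$ is largest exactly where $|j-n|$ is smallest.

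Now the asymptotics follow by picking out the dominant modes. In case (ii), $\hat\beta_0(0)=a_0\neq0$, so $j=n$ is active with the strict maximal rate $1$; dividing by $\lambda^*_{n,0}(t)=e^t$ kills $p$ and all other modes (rate $<1$), leaving $e^{-t}\hat X_n(t)e^{inu}=\tfrac{a_0}{n}(\sin nu,-\cos nu)=X^*_{n,0,a_0,0}(u)$. In case (iii), since $a_0=0$ and $a_k=b_k=0$ for $k<m$, every active index has $|j-n|\ge m$, with equality exactly at $j=n\pm m$, both active because $(a_m,b_m)\neq(0,0)$ and both sharing the maximal rate $1-\tfrac{m^2}{n^2}$. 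Dividing by $\lambda^*_{n,m}(t)=e^{(1-m^2/n^2)t}$, these two modes combine, as $t\to\infty$, into $X^\sharp(u):=\sum_{\pm}\tfrac{\hat\beta_{\pm m}(0)}{i(n\pm m)}e^{i(n\pm m)u}$, which has zero mean and satisfies $\partial_uX^\sharp=\big(a_m\cos(mu)+b_m\sin(mu)\big)e^{inu}$. Since $X^*_{n,m,a_m,b_m}$ has Legendre curvature $\beta^*(u)=a_m\cos(mu)+b_m\sin(mu)$ with moving frame $e^{inu}$, and --- expanding the products in \eqref{self-similar-X} by the product-to-sum identities, which yield only the nonzero frequencies $n\pm m$ --- also has zero mean, uniqueness of the antiderivative on $\mathbb S^1_{2\pi}$ forces $X^\sharp=X^*_{n,m,a_m,b_m}$. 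As for the translation $p$: when $m<n$ (and in case (ii)) one has $\lambda^*\to\infty$, so $p/\lambda^*\to0$ and any $p$ would do; when $m>n$ one has $X(\cdot,t)\to p$, forcing $p$ to be the conserved barycenter, which is exactly the $p$ above. Finally, $C^\infty(\mathbb S^1_{2\pi})$-convergence of the remainder as $t\to\infty$ is routine: every remaining active index has $|j-n|$ at least one more than the dominant value, so its exponent-difference to the dominant rate is bounded above by a negative constant and tends to $-\infty$ quadratically in $j$; combined with $\sup_j|\hat\beta_j(0)|<\infty$ this yields a Gaussian-in-$j$ bound, uniform for large $t$, that dominates any number of $u$-derivatives.

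The genuinely delicate points are the preservation of the normal form \eqref{n-rotation} --- which is what linearizes the $\beta$-equation and is exactly what the tangential term in \eqref{special inverse} is for --- and the exact matching $X^\sharp=X^*_{n,m,a_m,b_m}$, including pinning down $p$ as the (conserved) barycenter of $X_0$; the computations for the evolution equations and the $C^\infty$ tail estimate are otherwise routine.
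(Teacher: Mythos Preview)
Your argument is correct and rests on the same key observations as the paper's: the special gauge \eqref{special inverse} keeps $\ell\equiv n$ and $\nu\equiv\nu_0$, linearizing the $\beta$-equation into $\partial_t\beta=\tfrac{1}{n^2}\partial_u^2\beta+\beta$; $\beta$ then has an explicit Fourier solution; part (i) follows from closedness of $X_0$; and the barycenter of $X$ is conserved, which fixes $p$.

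The execution differs in how the asymptotics are transferred from $\beta$ to $X$. You complexify, compute the Fourier modes of $X$ directly via $ij\,\hat X_j=\hat\beta_{j-n}$, and read off the limit mode by mode, matching $X^\sharp$ with $X^*_{n,m,a_m,b_m}$ through the ``same derivative, same mean'' argument. The paper instead first estimates $\|\partial_u^i\beta(\cdot,t)-\partial_u^i\tilde\beta(\cdot,t)\|_{L^\infty}$ (where $\tilde\beta$ is the Legendre curvature of the target self-similar solution), then uses $\partial_u\bigl((X-p-\tilde X)/\lambda^*\bigr)=(\beta-\tilde\beta)\mu_0/\lambda^*$ together with the Poincar\'e and Sobolev inequalities to control $(X-p-\tilde X)/\lambda^*$ in $C^i$. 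Your route is more explicit and self-contained, and it makes the role of $p$ transparent (immaterial when $m<n$, forced to be the barycenter when $m>n$); the paper's route avoids the mode-by-mode bookkeeping for $X$ by packaging the passage from $\partial_u\hat X$ to $\hat X$ into two standard functional inequalities.
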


For the inverse curvature flow of embedded strictly convex closed regular curves, it was shown that the flow expands at the speed $e^t$ and converges to a circle when the flow is scaled at the rate of $e^{-t}$ (cf.\ \cite{Kr}). 
Therefore, Theorem \ref{thm:asymptotics} is an expansion of the result above. 
We note that Theorem \ref{thm:initial problem} and (iii) in Theorem \ref{thm:asymptotics} yield that, if $\beta_0$ satisfies $\int_{\mathbb{S}^1_{2\pi}} \beta_0 \; du = 0$, then the flow have $(2,3)$-cusps for any time $t > 0$.

As we mentioned, the inverse curvature flow of Legendre curves was already studied in \cite{LiWa}. 
Therefore, we here compare their results with ours. 
First different point is that they deal with the flow of Legendre curves which can be re-parametrized so that \eqref{n-rotation} holds only for $n=1$. 
Under this setting, they mainly study on a generalized isoperimetric type inequality for $\ell$-convex Legendre curves. 
As an application of it, they proved that an inverse curvature flow $(X, \nu)$ whose motion is governed by 
\[ \partial_t X = \frac{\beta}{\ell} \nu + \frac{1}{\ell} \partial_u\left(\frac{\beta}{\ell}\right) \mu \]
converges a self similar solution after a suitable rescaling. 
Since we deal with the inverse curvature flow of Legendre curves which can be re-parametrized so that \eqref{n-rotation} holds for general integer $n\in \mathbb{N}$, our results are expansion of their results from this perspective. 
Moreover, we precisely analyze the order of the scaling required to the convergence. 
Second different point is that we give a rigorous proof of the unique existence theory for the inverse curvature flow. 
In particular, as far as we know, the results that the flow is unique except for the diffeomorphic transformations when the normal velocity is specified by $N=\beta/\ell$, the inverse curvature flow only has $(2,3)$-cusps except degenerate zero points of $\beta$, and that the number of singularities is non-increasing are new. 

The remained part of this paper is organized as follows: 
In Section \ref{sec:geometric-eq}, we derive differential equations of geometric quantities for smooth flows of Legendre curves. 
In this section, the normal velocity is not restricted to $N=\beta/\ell$, and thus general flows are concerned. 
Furthermore, the $\ell$-convexity of the Legendre curves is not assumed. 
Theorem \ref{thm:initial problem} is proved in Section \ref{sec:existence} by using the geometric differential equations derived in Section \ref{sec:geometric-eq}. 
We first study on some properties related to the diffeomorphic transformations of the inverse curvature flow, which will be used to prove the uniqueness of the flow. 
The unique existence theory is proved by solving the differential equations of the Legendre curvature and constructing the flow of Legendre curves from its solution. 
In Section \ref{sec:self similar}, we prove Theorem \ref{thm:self similar}. 
By substituting the Legendre curvature of self similar solution into the geometric equations used in Section \ref{sec:existence}, we obtain an exact representation of the Legendre curvature, and we then complete the proof. 
Some figures of examples of the self similar solutions are also listed in this section. 
Theorem \ref{thm:asymptotics} is proved in Section \ref{sec:asymptotics} by applying Fouriier's method to the geometric differential equation of $\beta$ and using some functional inequalities to obtain decay estimates.

\subsection*{Acknowledgments} The work of the first author is supported by JSPS KAKENHI Grant No.\ JP23K12992, JP23H00085 and JP23K20802.
The work of the second author is supported by JSPS KAKENHI Grant  No.\ JP24K06728 and JP22KK0034.

\section{Geometric equations for a flow of Legendre curves} \label{sec:geometric-eq}

In this section, we derive geometric differential equations for a smooth flow of Legendre curves $(X, \nu) \in C^\infty(\mathbb{S}^1_{2\pi} \times (0, \infty); \mathbb{R}^2 \times \mathbb{S}^1)$. 
The velocity $\partial_t X$ can be decompose by the normal velocity $N$ and the tangent velocity $T$ as 
\begin{equation}\label{decompose-velocity}
\partial_t X(u,t) = N(u,t) \nu(u, t) + T(u, t) \mu(u, t) \quad \text{for} \; \; (u,t) \in \mathbb{S}^1_{2\pi} \times (0, \infty). 
\end{equation}
The purpose in this section is to derive differential equations of geometric quantities depending on $N$ and $T$. 
We note that the $\ell$-convexity of $(X, \nu)$ is not assumed in this section. 
The differential equations will be used choosing $N = \beta/\ell$ to study the inverse curvature flow later. 
The calculations in this section are based on a standard derivation of geometric differential equations for flows of regular curves (e.g.\ \cite{GaHa}).

Throughout this section, let $(X, \nu)$ be a smooth flow of Legendre curves and denote by $N, T$ and $(\ell, \beta)$ the normal velocity, the tangent velocity and the Legendre curvature of $(X, \nu)$, respectively. 
We further introduce an angle function $\Theta: \mathbb{S}^1_{2\pi} \times (0, \infty) \to \mathbb{S}^1_{2\pi}$ satisfying 
\[ \nu(u, t) = \begin{pmatrix}
\sin \Theta(u, t) \\
- \cos \Theta(u, t)
\end{pmatrix} \quad \text{for} \; \; (u, t) \in \mathbb{S}^1_{2\pi} \times (0, \infty). \]
We here identify the value of $\Theta$ modulus $2\pi$ so that $\Theta$ is smooth on $\mathbb{S}^1_{2\pi} \times (0, \infty)$. 
Notice that, in this setting, $\mu$ can be represented by 
\[ \mu(u, t) = \begin{pmatrix}
\cos \Theta(u, t) \\
\sin \Theta(u,t)
\end{pmatrix} \quad \text{for} \; \; (u, t) \in \mathbb{S}^1_{2\pi} \times (0, \infty). \]

We first derive a geometric differential equation of $\beta$ as follows:

\begin{prop}\label{prop:eq-beta}
The time derivative of $\beta$ satisfies 
\begin{equation}\label{eq-beta}
\partial_t \beta = N\ell + \partial_u T \quad \text{on} \; \; \mathbb{S}^1_{2\pi} \times (0, \infty). 
\end{equation}
\end{prop}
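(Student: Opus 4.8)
The plan is to differentiate the defining relation $\beta = \langle \partial_u X, \mu \rangle$ with respect to $t$ and simplify using the structure equations of the moving frame. First I would compute $\partial_t \beta = \langle \partial_t \partial_u X, \mu \rangle + \langle \partial_u X, \partial_t \mu \rangle$, using that mixed partials commute for the smooth flow. For the first term, substitute the velocity decomposition \eqref{decompose-velocity}, $\partial_t X = N\nu + T\mu$, and differentiate in $u$: $\partial_u \partial_t X = (\partial_u N)\nu + N \partial_u \nu + (\partial_u T)\mu + T \partial_u \mu$. Pairing with $\mu$ and using the orthonormality $\langle \nu, \mu\rangle = 0$, $\langle \mu, \mu \rangle = 1$, together with the Frenet-type relations $\partial_u \nu = \ell \mu$ and $\partial_u \mu = \partial_u(J\nu) = J \partial_u \nu = \ell J\mu = -\ell \nu$ (so $\langle \partial_u \mu, \mu \rangle = 0$), the first term collapses to $N\ell + \partial_u T$.

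For the second term, $\langle \partial_u X, \partial_t \mu \rangle$, I would note that $\partial_u X = \beta \mu$ by the defining Legendre relation, so this equals $\beta \langle \mu, \partial_t \mu \rangle$. Since $\mu$ is a unit vector field, $\langle \mu, \mu \rangle \equiv 1$ gives $\langle \mu, \partial_t \mu \rangle = 0$, hence the second term vanishes entirely. Combining the two contributions yields $\partial_t \beta = N\ell + \partial_u T$, which is exactly \eqref{eq-beta}.

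I would also remark, for completeness, that the relation $\partial_u \mu = -\ell \nu$ can alternatively be obtained directly from $\partial_t \Theta$ or $\partial_u \Theta$ bookkeeping via the angle function $\Theta$ introduced above: since $\nu = (\sin\Theta, -\cos\Theta)$ we have $\partial_u \nu = (\partial_u \Theta)(\cos\Theta, \sin\Theta) = (\partial_u \Theta)\mu$, so $\ell = \partial_u \Theta$, and similarly $\partial_u \mu = (\partial_u\Theta)(-\sin\Theta, \cos\Theta) = -\ell \nu$. This makes the frame derivatives transparent and keeps the argument self-contained.

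There is essentially no hard step here; the computation is short and purely mechanical. The only point requiring a little care is the justification that $\partial_t \partial_u X = \partial_u \partial_t X$, which is immediate from the assumed $C^\infty$ regularity of $(X,\nu)$ on $\mathbb{S}^1_{2\pi}\times(0,\infty)$, and the recognition that the seemingly extra term $\langle \partial_u X, \partial_t \mu\rangle$ drops out because $\partial_u X$ is parallel to $\mu$ while $\partial_t \mu \perp \mu$. Once those two observations are in place, the identity \eqref{eq-beta} follows directly.
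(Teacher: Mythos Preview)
Your proposal is correct and follows essentially the same approach as the paper: differentiate $\beta=\langle\partial_u X,\mu\rangle$ in $t$, use $\langle\mu,\partial_t\mu\rangle=0$ to kill the cross term, commute $\partial_t\partial_u X=\partial_u\partial_t X$, substitute $\partial_t X=N\nu+T\mu$, and apply the Frenet-type relations $\partial_u\nu=\ell\mu$, $\partial_u\mu=-\ell\nu$. The paper's write-up is slightly terser (it disposes of the $\langle\partial_u X,\partial_t\mu\rangle$ term up front and then computes in one line), but the content is identical.
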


\begin{proof}
We first note that $\langle \mu, \partial_t \mu \rangle = 0$ can be obtained by differentiating $|\mu|^2 = 1$ with respect to $t$, which yields $\langle \partial_u X, \partial_t \mu \rangle = 0$. 
Therefore, the decomposition \eqref{decompose-velocity} and the Frenet-Serret type formula $\partial_u \nu = \ell \mu$ and $\partial_u \mu = - \ell \nu$ imply
\begin{align*}
\partial_t \beta =&\; \partial_t \langle \partial_u X, \mu \rangle = \langle \partial_u \partial_t X, \mu \rangle = \langle \partial_u(N\nu + T\mu), \mu \rangle \\
=&\; \langle (\partial_u N) \nu + N\ell \mu + (\partial_u T) \mu - T\ell \nu, \mu \rangle = N\ell + \partial_u T. 
\end{align*}
This derive the equation \eqref{eq-beta}. 
\end{proof}

We next derive geometric differential equations of the moving frame.

\begin{prop}
The time derivative of $\mu$ and $\nu$ satisfies 
\begin{align}
\beta \partial_t \mu = (\partial_u N - T\ell) \nu \quad \text{on} \; \; \mathbb{S}^1_{2\pi} \times (0, \infty), \label{eq-mu}\\
\beta \partial_t \nu = (T\ell - \partial_u N) \mu \quad \text{on} \; \; \mathbb{S}^1_{2\pi} \times (0, \infty). \label{eq-nu}
\end{align}
\end{prop}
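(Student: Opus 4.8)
The plan is to reduce both identities \eqref{eq-mu} and \eqref{eq-nu} to a single scalar computation, namely that of the quantity $\beta\,\partial_t\Theta$, by exploiting the angle function $\Theta$ together with the compatibility relation $\partial_t\partial_u X = \partial_u\partial_t X$ and Proposition~\ref{prop:eq-beta}. First I would record the elementary consequences of the representations $\nu = (\sin\Theta, -\cos\Theta)$ and $\mu = (\cos\Theta, \sin\Theta)$: differentiating in $t$ gives $\partial_t\nu = (\partial_t\Theta)\mu$ and $\partial_t\mu = -(\partial_t\Theta)\nu$, while differentiating in $u$ recovers $\partial_u\nu = (\partial_u\Theta)\mu = \ell\mu$, so that $\ell = \partial_u\Theta$. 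In particular, both claimed equations become equivalent to the single identity $\beta\,\partial_t\Theta = T\ell - \partial_u N$ on $\mathbb{S}^1_{2\pi}\times(0,\infty)$: multiplying $\partial_t\mu = -(\partial_t\Theta)\nu$ by $\beta$ should yield $(\partial_u N - T\ell)\nu$, and multiplying $\partial_t\nu = (\partial_t\Theta)\mu$ by $\beta$ should yield $(T\ell - \partial_u N)\mu$.

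Next I would establish this scalar identity by differentiating the Frenet relation $\partial_u X = \beta\mu$ in $t$ and the decomposition $\partial_t X = N\nu + T\mu$ in $u$, then equating. On one side, $\partial_t(\beta\mu) = (\partial_t\beta)\mu + \beta\,\partial_t\mu = (\partial_t\beta)\mu - \beta(\partial_t\Theta)\nu$. On the other side, using $\partial_u\nu = \ell\mu$ and $\partial_u\mu = -\ell\nu$, one computes $\partial_u(N\nu + T\mu) = (\partial_u N - T\ell)\nu + (N\ell + \partial_u T)\mu$. Comparing the $\nu$-components of $\partial_t\partial_u X = \partial_u\partial_t X$ gives $-\beta\,\partial_t\Theta = \partial_u N - T\ell$, which is exactly the desired identity; comparing the $\mu$-components merely reproduces $\partial_t\beta = N\ell + \partial_u T$, i.e.\ Proposition~\ref{prop:eq-beta}, serving as a consistency check. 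Substituting back then yields \eqref{eq-mu} and \eqref{eq-nu} simultaneously.

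There is essentially no hard step here; it is a bookkeeping computation in the spirit of the proof of Proposition~\ref{prop:eq-beta}. The one point that deserves care is that one must not divide by $\beta$, since $\beta$ may vanish (precisely where $X$ is singular); keeping everything at the level of the vector identity $\partial_u X = \beta\mu$—equivalently, carrying the factor $\beta$ throughout—makes all the equalities valid on the whole of $\mathbb{S}^1_{2\pi}\times(0,\infty)$. An alternative route, which I would mention but not pursue, avoids the angle function altogether: from $|\mu|\equiv 1$ one gets $\partial_t\mu \parallel \nu$, computes $\beta\,\partial_t\mu = \partial_u\partial_t X - (\partial_t\beta)\mu$ directly to obtain \eqref{eq-mu}, and then deduces \eqref{eq-nu} by applying the fixed rotation $J$, using $\nu = -J\mu$ and $\partial_t(J\mu) = J\,\partial_t\mu$.
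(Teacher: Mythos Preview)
Your proposal is correct. The underlying computation---differentiating $\partial_u X = \beta\mu$ in $t$, differentiating $\partial_t X = N\nu + T\mu$ in $u$, and equating via $\partial_t\partial_u X = \partial_u\partial_t X$---is exactly what the paper does. The difference is only in packaging: you route the calculation through the angle function $\Theta$, substituting $\partial_t\mu = -(\partial_t\Theta)\nu$ at the outset and reducing both \eqref{eq-mu} and \eqref{eq-nu} to the scalar identity $\beta\,\partial_t\Theta = T\ell - \partial_u N$, whereas the paper leaves $\partial_t\mu$ unexpanded, reads off the vector identity \eqref{eq-mu} directly from the $\nu$-component, and obtains \eqref{eq-nu} by applying the rotation $J^{-1}$. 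In fact your ``alternative route'' is precisely the paper's argument. The paper then uses \eqref{eq-mu} in the \emph{next} proposition to derive the scalar equation for $\partial_t\Theta$, so you have effectively inverted the order of these two propositions; this is harmless, since the representation $\partial_t\mu = -(\partial_t\Theta)\nu$ needs nothing beyond the definition of $\Theta$. Your remark about not dividing by $\beta$ is well taken and matches the paper's care on this point.
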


\begin{proof}
Differentiating $\partial_u X = \beta \mu$ with respect to $t$, the equation \eqref{eq-beta} yields
\begin{equation}\label{eq-mu1}
\partial_t \partial_u X = (\partial_t \beta) \mu +  \beta \partial_t \mu = (N\ell + \partial_u T) \mu + \beta \partial_t \mu. 
\end{equation}
On the other hand, since $\partial_u$ and $\partial_t$ are commutative, we have by \eqref{decompose-velocity} and the Frenet-Serret type formula
\[ \partial_t \partial_u X = \partial_u (N \nu + T \mu) = (N\ell + \partial_u T) \mu + (\partial_u N - T\ell) \nu. \]
Combining it and \eqref{eq-mu1}, we obtain \eqref{eq-mu}. 
The equation \eqref{eq-nu} can be obtain by rotating the both side of \eqref{eq-mu} clockwise by $\pi/2$. 
\end{proof}

The following proposition present geometric differential equations of the angle function.

\begin{prop}
The derivatives of $\Theta$ satisfies 
\begin{align}
&\beta \partial_t \Theta = T\ell - \partial_u N \quad \text{on} \; \; \mathbb{S}^1_{2\pi} \times (0, \infty), \label{eq-theta-t}\\
&\partial_u \Theta = \ell \quad \text{on} \; \; \mathbb{S}^1_{2\pi} \times (0, \infty). \label{eq-theta-u}
\end{align}
\end{prop}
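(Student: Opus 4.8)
The plan is to read off $\partial_t\Theta$ and $\partial_u\Theta$ directly from the representations
\[
\nu = \begin{pmatrix}\sin\Theta \\ -\cos\Theta\end{pmatrix}, \qquad \mu = \begin{pmatrix}\cos\Theta \\ \sin\Theta\end{pmatrix}
\]
by the chain rule, and then to match the outcome against the Frenet--Serret formula and the already-established evolution equation \eqref{eq-nu}. Since the paper has fixed a smooth branch of $\Theta$ on $\mathbb{S}^1_{2\pi}\times(0,\infty)$, differentiating $\nu$ is legitimate, and the key observation is simply that both $\partial_t\nu$ and $\partial_u\nu$ come out as scalar multiples of the unit vector $\mu$.

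Concretely, I would first differentiate $\nu=(\sin\Theta,-\cos\Theta)^{\top}$ in $u$ and in $t$ to get
\[
\partial_u\nu = (\partial_u\Theta)\begin{pmatrix}\cos\Theta\\ \sin\Theta\end{pmatrix} = (\partial_u\Theta)\,\mu, \qquad \partial_t\nu = (\partial_t\Theta)\,\mu.
\]
For \eqref{eq-theta-u}, I would compare $\partial_u\nu=(\partial_u\Theta)\mu$ with the Frenet--Serret type formula $\partial_u\nu=\ell\mu$ used already in Section \ref{sec:geometric-eq}; since $|\mu|=1$, in particular $\mu\neq 0$, the coefficients must agree, giving $\partial_u\Theta=\ell$. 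For \eqref{eq-theta-t}, I would multiply $\partial_t\nu=(\partial_t\Theta)\mu$ by $\beta$ and substitute the evolution equation \eqref{eq-nu}, namely $\beta\,\partial_t\nu=(T\ell-\partial_u N)\mu$, to obtain $\beta(\partial_t\Theta)\mu=(T\ell-\partial_u N)\mu$; again using $\mu\neq 0$ yields $\beta\,\partial_t\Theta=T\ell-\partial_u N$. (Equivalently one could differentiate $\mu=(\cos\Theta,\sin\Theta)^{\top}$, getting $\partial_t\mu=-(\partial_t\Theta)\nu$ and $\partial_u\mu=-(\partial_u\Theta)\nu$, and compare with \eqref{eq-mu} and $\partial_u\mu=-\ell\nu$; this gives the same two identities.)

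There is essentially no serious obstacle here: the statement is a direct consequence of the angle-function parametrization together with the identities derived in the preceding propositions. The only point that deserves a sentence of care is that $\Theta$ is a priori only defined modulo $2\pi$, but because the excerpt has already chosen $\Theta$ smooth, the chain rule applies unambiguously and no branch issue arises; beyond that the proof is two lines of computation.
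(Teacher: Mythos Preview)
Your proposal is correct and essentially matches the paper's proof: the paper differentiates $\mu$ to get $\partial_t\mu=-(\partial_t\Theta)\nu$ and compares with \eqref{eq-mu}, while you primarily differentiate $\nu$ and compare with \eqref{eq-nu}, but you explicitly note the $\mu$/\eqref{eq-mu} route as an equivalent alternative, which is exactly what the paper does. The argument for \eqref{eq-theta-u} is identical in both.
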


\begin{proof}
The first equation \eqref{eq-theta-t} follows from 
\[ \partial_t \mu = \partial_t \begin{pmatrix}
\cos \Theta \\
\sin \Theta
\end{pmatrix} =
\partial_t \Theta \begin{pmatrix}
- \sin \Theta \\
\cos \Theta
\end{pmatrix} = - (\partial_t \Theta) \nu \]
and the equation \eqref{eq-mu}. 
The remained one \eqref{eq-theta-u} also follows from $\partial_u \nu = (\partial_u \Theta) \mu$ and the definition of $\ell$.  
\end{proof}

We finally derive a geometric differential equation of $\ell$.

\begin{prop}\label{prop:eq-l}
The time derivative of $\ell$ satisfies 
\begin{equation}\label{eq-l}
\beta^2 \partial_t \ell = \beta \partial_u  (T\ell - \partial_u N) - \partial_u \beta (T\ell - \partial_u N) \quad \text{on} \; \; \mathbb{S}^1_{2\pi} \times (0, \infty). 
\end{equation}
\end{prop}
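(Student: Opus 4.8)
The plan is to prove the equation \eqref{eq-l} for $\partial_t \ell$ by differentiating the relation $\partial_u \Theta = \ell$ in \eqref{eq-theta-u} with respect to $t$, exactly parallel to how the moving-frame equations were derived from the structure equations. Since $\partial_u$ and $\partial_t$ commute, we have $\partial_t \ell = \partial_t \partial_u \Theta = \partial_u \partial_t \Theta$, so the computation reduces to differentiating \eqref{eq-theta-t} in $u$.

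First I would rewrite \eqref{eq-theta-t} as $\partial_t \Theta = \beta^{-1}(T\ell - \partial_u N)$ on the open set where $\beta \neq 0$; on that set one may differentiate in $u$ directly. Applying the quotient (product) rule gives
\[
\partial_u \partial_t \Theta = \frac{\partial_u(T\ell - \partial_u N)}{\beta} - \frac{\partial_u \beta \,(T\ell - \partial_u N)}{\beta^2}.
\]
Multiplying through by $\beta^2$ and using $\partial_t \ell = \partial_u \partial_t \Theta$ yields precisely \eqref{eq-l}, initially only on $\{\beta \neq 0\}$.

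The one point requiring care is that \eqref{eq-l} is claimed to hold on all of $\mathbb{S}^1_{2\pi} \times (0,\infty)$, including the zero set of $\beta$, whereas the division argument is valid only where $\beta \neq 0$. I would handle this by noting that both sides of \eqref{eq-l} are continuous (indeed smooth) functions on $\mathbb{S}^1_{2\pi} \times (0,\infty)$, since $(X,\nu)$ is a smooth flow of Legendre curves and all quantities $\ell, \beta, N, T$ and their derivatives are smooth. If the zero set of $\beta(\cdot,t)$ has empty interior for each $t$ — which one expects generically — the identity extends by density and continuity. To avoid relying on such a hypothesis here (the $\ell$-convexity and nondegeneracy of zeros are not assumed in this section), the cleanest route is to instead start from the already-established identity \eqref{eq-mu}, namely $\beta \partial_t \mu = (\partial_u N - T\ell)\nu$, differentiate it in $u$, and combine with $\partial_u \mu = -\ell\nu$, $\partial_u \nu = \ell \mu$ and $\partial_u X = \beta\mu$ to isolate $\beta^2 \partial_t \ell$ without ever dividing by $\beta$; pairing the resulting vector identity against $\mu$ (or computing $\partial_t \partial_u \nu$ two ways) produces \eqref{eq-l} as a polynomial identity in the smooth quantities, valid everywhere.

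The main obstacle is thus not the differentiation itself — that is routine — but organizing the argument so that it genuinely holds on the zero set of $\beta$ without an extra regularity assumption; using \eqref{eq-mu} or \eqref{eq-nu} rather than \eqref{eq-theta-t} as the starting point sidesteps the division and makes the final formula an honest pointwise identity.
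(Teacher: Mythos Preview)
Your first route---differentiate $\ell=\partial_u\Theta$ in $t$ and feed in \eqref{eq-theta-t}---is exactly what the paper does, but the paper avoids the division by $\beta$ that worries you with a one-line algebraic rearrangement.  From the product rule $\beta\,\partial_u(\beta\,\partial_t\Theta)=\beta(\partial_u\beta)\partial_t\Theta+\beta^2\partial_u\partial_t\Theta$ one gets
\[
\beta^2\,\partial_t\ell=\beta^2\,\partial_u\partial_t\Theta=\beta\,\partial_u(\beta\,\partial_t\Theta)-(\partial_u\beta)(\beta\,\partial_t\Theta),
\]
and now \eqref{eq-theta-t} lets you substitute $\beta\,\partial_t\Theta=T\ell-\partial_u N$ in both occurrences.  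This is a polynomial identity in smooth quantities, so it holds on all of $\mathbb{S}^1_{2\pi}\times(0,\infty)$ with no density argument and no appeal to the structure of the zero set of $\beta$.  Your proposed detour through \eqref{eq-mu} would also work, but it is unnecessary: the paper's trick is precisely the missing step that makes your first approach go through pointwise everywhere.
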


\begin{proof}
Due to \eqref{eq-theta-t} and \eqref{eq-theta-u}, we have 
\begin{align*}
\beta^2 \partial_t \ell = \beta^2 \partial_t \partial_u \Theta = \beta \partial_u (\beta \partial_t \Theta) -  \beta \partial_u \beta \partial_t \Theta  = \beta \partial_u (T\ell - \partial_u N) - \partial_u \beta (T\ell - \partial_u N), 
\end{align*}
which derive the equation \eqref{eq-l}. 
\end{proof}

\begin{rmk}
The equation \eqref{eq-l} can be changed to 
\begin{equation}\label{eq-l-changed}
\partial_t \ell = \partial_u \left( \frac{T\ell - \partial_u N}{\beta} \right) 
\end{equation}
except the zero points of $\beta$. 
\end{rmk} 

\section{Unique existence of the inverse curvature flow} \label{sec:existence}

In this section, we study the unique existence of the inverse curvature flow in global time as stated in Theorem \ref{thm:initial problem}. 
The proof is divided step by step. 
We first study on a sufficient condition of the tangent velocity of the inverse curvature flow. 
We then prove that any inverse curvature flow having the tangent velocity satisfying it can be diffeomorphic transformed into the special inverse curvature flow whose velocity formed by \eqref{special inverse}. 
Therefore, the uniqueness of the inverse curvature flow in the sense stated in Theorem \ref{thm:initial problem} can be obtained by proving the uniqueness of the flow $(X, \nu)$ satisfying \eqref{special inverse} and starting from $(X_0, \nu_0)$. 
We thus finally prove the statement in Theorem \ref{thm:initial problem} for the special inverse curvature flow.

We first present the sufficient condition of the tangent velocity of the inverse curvature flow. 

\begin{prop}\label{prop:tangent-velocity}
Let $(X, \nu) \in C^\infty(\mathbb{S}^1_{2\pi} \times (0, \infty); \mathbb{R}^2 \times \mathbb{S}^1)$ be a smooth inverse curvature flow. 
Let $T$ and $(\ell, \beta)$ be respectively the tangent velocity and the Legendre curvature of $(X, \nu)$. 
Then, there exists $F \in C^\infty(\mathbb{S}^1_{2\pi} \times (0, \infty))$ such that 
\begin{equation}\label{form-T} 
T(u, t) = \frac{\partial_u\beta(u, t)}{(\ell(u, t))^2} + F(u, t) \beta(u, t) \quad \text{for} \; \; (u, t) \in \mathbb{S}^1_{2\pi} \times (0, \infty). 
\end{equation} 
\end{prop}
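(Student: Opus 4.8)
The plan is to exploit the geometric differential equation for $\ell$ derived in Proposition~\ref{prop:eq-l} together with the inverse curvature flow condition $N = \beta/\ell$. Substituting $N = \beta/\ell$ into \eqref{eq-l} gives
\begin{equation*}
\beta^2 \partial_t \ell = \beta \partial_u\!\left(T\ell - \partial_u\!\left(\tfrac{\beta}{\ell}\right)\right) - \partial_u \beta \left(T\ell - \partial_u\!\left(\tfrac{\beta}{\ell}\right)\right),
\end{equation*}
so that, away from the zeros of $\beta$, we get $\partial_t \ell = \partial_u\!\big((T\ell - \partial_u(\beta/\ell))/\beta\big)$ by \eqref{eq-l-changed}. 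The first step is to observe that the right-hand side here is an exact $u$-derivative, and to integrate this relation in $u$ over $\mathbb{S}^1_{2\pi}$ against an appropriate test: by $\eqref{eq-theta-u}$ we have $\int_{\mathbb{S}^1_{2\pi}} \ell\,du = \int_{\mathbb{S}^1_{2\pi}} \partial_u \Theta\,du \in 2\pi\mathbb{N}$, a topological quantity that is constant in $t$; hence $\int_{\mathbb{S}^1_{2\pi}} \partial_t \ell\,du = 0$, which is automatic from the exact-derivative form but more importantly tells us the quantity $(T\ell - \partial_u(\beta/\ell))/\beta$ is single-valued (periodic) on the regular set.

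The heart of the argument is to show that $T\ell - \partial_u(\beta/\ell)$ must itself be divisible by $\beta$ in a smooth way. The key step I would carry out is to rewrite $T\ell - \partial_u(\beta/\ell) = T\ell - (\partial_u\beta)/\ell + \beta (\partial_u\ell)/\ell^2$. Comparing with the desired conclusion \eqref{form-T}, set $G := T\ell - \partial_u(\beta/\ell)$; then $T = (\partial_u\beta)/\ell^2 + F\beta$ is equivalent to $G = \ell F \beta + \beta (\partial_u\ell)/\ell^2$, i.e. $G/\beta$ extends smoothly across the zeros of $\beta$. So the real claim reduces to: $G$ vanishes to the appropriate order at every zero of $\beta$, with the ratio $G/\beta$ smooth. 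To see this, return to the unscaled equation \eqref{eq-l}, which holds on all of $\mathbb{S}^1_{2\pi}\times(0,\infty)$ including zeros of $\beta$: it reads $\beta^2 \partial_t \ell = \beta \partial_u G - (\partial_u\beta) G$. At a point $(u_0,t_0)$ with $\beta(u_0,t_0)=0$, this forces $(\partial_u\beta)(u_0,t_0)\,G(u_0,t_0) = 0$; where $\partial_u\beta \neq 0$ this gives $G(u_0,t_0)=0$ directly. Differentiating \eqref{eq-l} in $u$ and evaluating again at the zero controls the next order, and an induction on the order of vanishing of $\beta$ at $u_0$ (using that $\beta \in C^\infty$ and that at a zero of finite order the successive $u$-derivatives of \eqref{eq-l} pin down successive $u$-derivatives of $G$) yields that $G$ vanishes at $u_0$ to at least the order of $\beta$. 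Then $G/\beta$ is smooth there by the smooth division lemma (Malgrange/Hadamard-type), and $F := (G/\beta - \beta(\partial_u\ell)/\ell^2)/\ell$ — well-defined and smooth since $\ell$ is a positive smooth function by $\ell$-convexity — is the sought function, with $T = (\partial_u\beta)/\ell^2 + F\beta$ after unwinding definitions.

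Alternatively, and perhaps more cleanly, I would argue directly: from \eqref{eq-l-changed} the function $H := G/\beta = (T\ell - \partial_u(\beta/\ell))/\beta$ satisfies $\partial_t \ell = \partial_u H$ on the open set where $\beta \neq 0$, and since $\partial_t \ell$ is smooth everywhere while $\int \partial_t\ell\,du = 0$, $H$ extends to a function $\tilde H$ on all of $\mathbb{S}^1_{2\pi}\times(0,\infty)$ with $\partial_u \tilde H = \partial_t \ell$ and $\int \tilde H\,du$ chosen appropriately; one checks $\tilde H$ is smooth (its $u$-derivative is smooth, and a normalization in $t$ can be made smooth), and then defines $F = (\tilde H - \beta(\partial_u\ell)/\ell^2)/\ell$. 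The main obstacle is the extension/regularity step: making sure $G$ really does vanish to high enough order at every zero of $\beta$ — including possibly degenerate zeros — so that $G/\beta$ is genuinely smooth rather than merely continuous. This is where the full force of \eqref{eq-l} holding across the zeros (not just the divided form \eqref{eq-l-changed}) is needed, and where one must be careful whether $\beta$ has isolated zeros or zero intervals; on a zero interval $G$ must vanish identically there, which also follows from \eqref{eq-l} since $\beta \equiv 0$ there forces $\partial_u\beta \equiv 0$ too, and then continuity of $G$ and the structure of \eqref{eq-l} near the endpoints handle the patching.
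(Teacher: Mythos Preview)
Your approach via \eqref{eq-l} has a genuine gap. The relation \eqref{eq-l} is obtained by differentiating \eqref{eq-theta-t} in $u$, so it carries strictly less information; the lost constant of integration is exactly what you are trying to recover. Concretely: on any interval where $\beta \equiv 0$ (which the hypotheses of the proposition do not exclude), both sides of \eqref{eq-l} vanish identically and you learn nothing about $G = T\ell - \partial_u(\beta/\ell) = T\ell$ there. Your claim that ``$G$ must vanish identically there, which also follows from \eqref{eq-l}'' is simply false --- nothing in \eqref{eq-l} forces $T\ell = 0$ on a zero interval of $\beta$. The same difficulty arises at an isolated zero of infinite order: your induction shows at best that $G$ vanishes to all orders, but that alone does not make $G/\beta$ smooth (e.g.\ $\beta = e^{-1/u^2}$, $G = e^{-1/(2u^2)}$), and the division lemma you invoke needs hypotheses you have not verified. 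Your order-of-vanishing argument is sound only at finite-order zeros.

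The paper sidesteps all of this by using \eqref{eq-mu} directly rather than its differentiated consequence \eqref{eq-l}. From $\beta\,\partial_t\mu = (\partial_u N - T\ell)\nu$ one reads off $T\ell - \partial_u N = -\beta\langle\partial_t\mu,\nu\rangle$, where $\langle\partial_t\mu,\nu\rangle$ is manifestly smooth since $\mu$ is. Substituting $N = \beta/\ell$ and expanding $\partial_u(\beta/\ell)$ yields $T = \partial_u\beta/\ell^2 + F\beta$ with the explicit $F = -\partial_u\ell/\ell^3 - \ell^{-1}\langle\partial_t\mu,\nu\rangle$, in one line. The divisibility you are fighting to establish is already built into \eqref{eq-mu} (equivalently, \eqref{eq-theta-t} says $T\ell - \partial_u N = \beta\,\partial_t\Theta$ with $\partial_t\Theta$ smooth); passing to \eqref{eq-l} discards this factorization, and it cannot be reconstructed from \eqref{eq-l} alone.
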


\begin{proof}
Due to \eqref{eq-mu} and $N=\beta/\ell$, we have 
\begin{equation}\label{form-T1} 
T\nu = \frac{1}{\ell} \left\{\partial_u\left(\frac{\beta}{\ell} \right) \nu - \beta \partial_t \mu \right\}  = \frac{\partial_u \beta}{\ell^2} \nu - \frac{\beta \partial_u \ell}{\ell^3} \nu - \frac{\beta}{\ell} \partial_t \mu. 
\end{equation} 
Taking the inner product of both sides of \eqref{form-T1} with $\nu$ yields \eqref{form-T}, where 
\[ F = -\frac{\partial_u \ell}{\ell^3} - \frac{1}{\ell} \langle \partial_t \mu, \nu\rangle. \]
This completes the proof. 
\end{proof}

\begin{rmk}
For a general smooth flow of Legendre curves $(X, \nu)$, due to the equation \eqref{eq-mu}, the relation between the normal velocity $N$ and the tangent velocity $T$ can be written as 
\[ \ell(u, t) T(u, t) = \partial_u N (u, t) + F(u, t) \beta(u, t) \quad \text{for} \; \; (u, t) \in \mathbb{S}^1_{2\pi} \times (0, \infty) \]
for some smooth function $F \in C^\infty (\mathbb{S}^1_{2\pi} \times (0, \infty))$. 
\end{rmk}

We next prove that any inverse curvature flow having the tangent velocity satisfying \eqref{form-T} can be diffeomorphic transformed into the special inverse curvature flow. 

\begin{prop}\label{prop:re-para-time}
Let $\alpha \in (0, 1)$. 
For any smooth function $F \in C^\infty(\mathbb{S}^1_{2\pi} \times (0, \infty))$, let $(X, \nu) \in C([0, \infty); C^1(\mathbb{S}^1_{2\pi}; \mathbb{R}^2 \times \mathbb{S}^1)) \cap C^\infty(\mathbb{S}^1_{2\pi} \times (0, \infty); \mathbb{R}^2 \times \mathbb{S}^1)$ be an inverse curvature flow starting from $(X_0, \nu_0) \in C^{1+\alpha}(\mathbb{S}^1_{2\pi}; \mathbb{R}^2 \times \mathbb{S}^1)$ and satisfying \eqref{form-T}. 
Then, there exists an integer $n \in \mathbb{N}$ and a function $\phi \in  C([0, \infty); C^1(\mathbb{S}^1_{2\pi}; \mathbb{S}^1_{2\pi})) \cap C^\infty(\mathbb{S}^1 \times (0, \infty); \mathbb{S}^1)$ starting from a $C^1$-diffeomorphism $\phi_0 \in C^{1+\alpha} (\mathbb{S}^1_{2\pi}; \mathbb{S}^1_{2\pi})$ such that $\phi(\cdot, t): \mathbb{S}^1_{2\pi} \to \mathbb{S}^1_{2\pi}$ is a diffeomorphism for $t \ge 0$ and the re-parametrized flow $(\tilde{X}(u, t), \tilde{\nu}(u, t)) := (X(\phi(u,t), t), \nu(\phi(u, t), t))$ satisfies 
\begin{equation}\label{re-para-time} 
\tilde{\nu}(u, 0) = \begin{pmatrix}
\sin(nu) \\
-\cos(nu)
\end{pmatrix} \quad \text{for} \; \; u \in \mathbb{S}^1_{2\pi}, \quad \partial_t \tilde{X} = \frac{\tilde{\beta}}{\tilde{\ell}} \tilde{\nu} + \frac{\partial_u \tilde{\beta}}{\tilde{\ell}^2} \tilde{\mu} \quad \text{on} \; \; \mathbb{S}^1_{2\pi} \times (0, \infty), 
\end{equation}
where $\tilde{\mu}$ and $(\tilde{\ell}, \tilde{\beta})$ are respectively the unit tangent vector and the Legendre curvature of $(\tilde{X}, \tilde{\nu})$. 
\end{prop}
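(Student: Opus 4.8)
The plan is to build the reparametrization $\phi$ explicitly out of the angle function rather than to solve a PDE for it. Recall that a smooth flow of Legendre curves has an angle function $\Theta$ with $\partial_u\Theta=\ell$ by \eqref{eq-theta-u}. The point is that the single prescription $\Theta(\phi(u,t),t)=nu$ does two things at once: it normalizes the frame so that $\tilde\nu(u,t)=(\sin nu,-\cos nu)$, and, because the flow satisfies $N=\beta/\ell$ and its tangent velocity has the form \eqref{form-T}, it also forces the reparametrized tangent velocity to be exactly $\partial_u\tilde\beta/\tilde\ell^{2}$; so one reparametrization $\phi$ achieves both conditions in \eqref{re-para-time}.

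First I would fix $n$. Since $(X,\nu)\in C([0,\infty);C^{1}(\mathbb{S}^1_{2\pi};\mathbb{R}^2\times\mathbb{S}^1))$, the map $t\mapsto\nu(\cdot,t)\in C^{0}(\mathbb{S}^1_{2\pi};\mathbb{S}^1)$ is continuous, so its degree is a continuous, integer valued, hence constant function of $t$; by $\ell$-convexity (so $\ell>0$ throughout and $\partial_u\Theta(\cdot,0)=\ell_0>0$) this value is $n=\frac{1}{2\pi}\int_{\mathbb{S}^1_{2\pi}}\ell_0\,du\in\mathbb{N}$. Extending $\Theta$ to $t=0$ and lifting to $\bar\Theta:\mathbb{R}\times[0,\infty)\to\mathbb{R}$ with $\bar\Theta(u+2\pi,t)=\bar\Theta(u,t)+2\pi n$, each $\bar\Theta(\cdot,t)$ is strictly increasing with derivative $\ell>0$; I then define $\phi$ by $\bar\Theta(\phi(u,t),t)=nu$, that is $\phi(u,t)=\bar\Theta(\cdot,t)^{-1}(nu)$. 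The relation $\bar\Theta(v+2\pi,t)=\bar\Theta(v,t)+2\pi n$ gives $\phi(u+2\pi,t)=\phi(u,t)+2\pi$, so $\phi$ descends to $\mathbb{S}^1_{2\pi}$, and differentiating the defining relation in $u$ gives $\ell(\phi(u,t),t)\,\partial_u\phi=n$, so $\partial_u\phi=n/\ell(\phi(u,t),t)>0$ and each $\phi(\cdot,t)$ is an orientation preserving diffeomorphism. The asserted regularity of $\phi$ — smooth for $t>0$, continuous into $C^1$ up to $t=0$, with $\phi_0:=\phi(\cdot,0)\in C^{1+\alpha}$ a $C^1$-diffeomorphism — follows from the inverse and implicit function theorems applied to $\bar\Theta$, using that $\Theta$ inherits the regularity of $\nu$ and that $\ell(\cdot,t)$ is bounded below by a positive constant for each $t$.

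Next I set $\tilde X(u,t):=X(\phi(u,t),t)$ and $\tilde\nu(u,t):=\nu(\phi(u,t),t)$. Then $\tilde\mu=J\tilde\nu=\mu(\phi,t)$, and from $\partial_u\tilde X=\partial_u\phi\,\beta(\phi,t)\mu(\phi,t)$ and $\partial_u\tilde\nu=\partial_u\phi\,\ell(\phi,t)\mu(\phi,t)$ one reads off $\tilde\beta=\partial_u\phi\,\beta(\phi,t)$ and $\tilde\ell=\partial_u\phi\,\ell(\phi,t)$; since $\bar\Theta(\phi,t)=nu$ the angle of $\tilde\nu$ is $nu$, hence $\tilde\nu(u,t)=(\sin nu,-\cos nu)$ for all $t\ge0$ (in particular at $t=0$) and $\tilde\ell\equiv n$. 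The normal velocity of $\tilde X$ is $N(\phi,t)=\beta(\phi,t)/\ell(\phi,t)=\tilde\beta/\tilde\ell$, so the $\tilde\nu$-component of \eqref{re-para-time} holds automatically; moreover $(\tilde X,\tilde\nu)$ is a Legendre curve inheriting the regularity of $(X,\nu)$, and $\tilde\ell\equiv n>0$ makes it $\ell$-convex for $t>0$. For the $\tilde\mu$-component, the tangent velocity of $\tilde X$ is $\tilde T=(\partial_t\phi)\beta(\phi,t)+T(\phi,t)$; inserting \eqref{form-T}, using $\tilde\ell\equiv n$ and computing $\partial_u\tilde\beta$ by the chain rule, the identity $\tilde T=\partial_u\tilde\beta/\tilde\ell^{2}$ becomes equivalent to $\beta(\phi,t)\bigl[\partial_t\phi+F(\phi,t)+(\partial_u\ell)(\phi,t)/\ell(\phi,t)^{3}\bigr]=0$. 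Differentiating $\bar\Theta(\phi(u,t),t)=nu$ in $t$ gives $\ell(\phi,t)\,\partial_t\phi=-(\partial_t\Theta)(\phi,t)$, while \eqref{eq-theta-t} together with $N=\beta/\ell$ and \eqref{form-T} gives $\beta\,\partial_t\Theta=\beta\ell\bigl(F+(\partial_u\ell)/\ell^{3}\bigr)$; multiplying the first relation by $\beta(\phi,t)$, substituting the second, and cancelling the positive factor $\ell(\phi,t)$ yields precisely the required identity. Note that $\beta$ is never divided by, so no hypothesis on the zero set of $\beta$ is needed; this establishes \eqref{re-para-time}.

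The step that needs real care is the regularity bookkeeping near $t=0$: extracting a $C^{1+\alpha}$ lift $\bar\Theta_0$ from $\nu_0$, checking that $\ell_0\in C^{\alpha}$ bounded away from $0$ forces $\bar\Theta_0^{-1}\in C^{1+\alpha}$ (so that $\phi_0$ is a $C^{1+\alpha}$ $C^1$-diffeomorphism), and that the inversion $\bar\Theta(\cdot,t)\mapsto\bar\Theta(\cdot,t)^{-1}$ depends continuously on $t$ in the $C^1$ topology. Everything else is a routine chain rule computation; the only structural input is the observation in the first paragraph, namely that the ansatz $\Theta(\phi(\cdot,t),t)=nu$ simultaneously kills the $F\beta$ term in the tangent velocity and normalizes the frame.
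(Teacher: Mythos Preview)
Your argument is correct, and it takes a genuinely different route from the paper's. The paper proceeds by writing down the quasi-linear parabolic PDE
\[
\partial_t\phi=\frac{\partial_u^2\phi}{(\partial_u\phi)^2\,\ell(\phi,t)^2}-F(\phi,t),\qquad \phi(\cdot,0)=\phi_0,
\]
proving an a~priori two-sided gradient bound for $\partial_u\phi$ via the maximum principle (Proposition~\ref{prop:apriori-grqdient}), and then invoking standard existence theory for uniformly parabolic quasi-linear equations to produce $\phi$. Your construction sidesteps all of this: you take $\phi(\cdot,t)=\bar\Theta(\cdot,t)^{-1}(n\,\cdot)$ directly, so existence, smoothness, and the diffeomorphism property of $\phi$ are immediate from $\partial_u\Theta=\ell>0$ and the inverse function theorem, with no PDE solved at all. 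In fact your $\phi$ is the same function as the paper's---one checks that it satisfies the paper's PDE on $\{\beta\neq0\}$ and hence everywhere by continuity---but you obtain it without the parabolic machinery.

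Your approach also buys a little more: it gives $\tilde\nu(u,t)=(\sin nu,-\cos nu)$ and $\tilde\ell\equiv n$ for \emph{all} $t\ge0$, not just at $t=0$, a fact the paper only recovers later (in the proof of Proposition~\ref{prop:special-flow}) via uniqueness for the equation \eqref{sys-l}. The paper's PDE route, on the other hand, is arguably more portable: it would adapt to tangent-velocity ans\"atze for which the ``one reparametrization fixes the frame for all time'' trick is not available. Your careful handling of the zeros of $\beta$---multiplying $\ell\,\partial_t\phi=-\partial_t\Theta$ by $\beta$ before using \eqref{eq-theta-t}, so that nothing is ever divided by $\beta$---is exactly right and is the one place where a careless version of this argument would fail.
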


The proof of Proposition \ref{prop:re-para-time} will be completed by deriving an initial value problem for a partial differential equation of $\phi$ to satisfies \eqref{re-para-time} under the assumption $\partial_u \phi \neq 0$ on $\mathbb{S}^1_{2\pi} \times [0, \infty)$, and then solving the problem. 
Therefore, it also should be proved that a solution $\phi$ to the problem satisfies $\partial_u \phi \neq 0$ on $\mathbb{S}^1_{2\pi} \times [0, \infty)$ to complete the proof. 

The initial function $\phi_0$ will be chosen as the function obtained in Proposition \ref{prop:re-para} to re-parametrize $(X_0, \nu_0)$. 
We here derive the partial differential equation. 
We have by a simple calculation 
\begin{align*}
&\tilde{\ell}(u,t) = (\partial_u \phi(u, t)) \ell(\phi(u, t), t) \quad \text{for} \; \; (u, t) \in \mathbb{S}^1_{2\pi} \times (0, \infty), \\
&\tilde{\beta}(u, t) = (\partial_u \phi(u, t)) \beta(\phi(u, t), t) \quad \text{for} \; \; (u, t) \in \mathbb{S}^1_{2\pi} \times (0, \infty). 
\end{align*}
Therefore, since $(X, \nu)$ satisfies \eqref{form-T}, we obtain 
\begin{equation}\label{deri-PDE}
\begin{aligned}
\partial_t \tilde{X}(u, t) =&\;  (\partial_t \phi(u, t)) \partial_u X(\phi(u, t), t) + \partial_t X(\phi(u, t), t) \\
=&\; (\partial_t \phi(u, t)) \beta(\phi(u,t), t) \tilde{\mu}(u, t) + \frac{\tilde{\beta}(u, t)}{\tilde{\ell}(u,t)} \tilde{\nu}(u, t) + \frac{\partial_u \tilde{\beta}(u, t)}{(\tilde{\ell}(u, t))^2} \tilde{\mu}(u, t) \\
&\; + \left(F(\phi(u, t), t) - \frac{\partial_u^2 \phi(u, t)}{(\partial_u \phi(u, t))^2 (\ell(\phi(u, t), t))^2} \right) \beta(\phi(u, t), t) \tilde{\mu}(u, t)
\end{aligned}
\end{equation}
for $(u, t) \in \mathbb{S}^1_{2\pi} \times (0, \infty)$. 
It thus is sufficient that $\phi$ satisfies 
\begin{equation}\label{deri-initial-pro} 
\begin{cases}
\partial_t \phi(u,t) = \frac{\partial_u^2 \phi(u, t)}{(\partial_u \phi(u, t))^2 (\ell(\phi(u, t), t))^2} - F(\phi(u, t), t) & \text{for} \; \; (u, t) \in \mathbb{S}^1_{2\pi} \times (0, \infty), \\
\phi(u, 0) = \phi_0(u) & \text{for} \; \; u \in \mathbb{S}^1_{2\pi}. 
\end{cases}
\end{equation}
In order to solve \eqref{deri-initial-pro}, we need a priori gradient estimate as follows: 

\begin{prop}\label{prop:apriori-grqdient}
Let $\alpha \in (0, 1)$ and a $C^1$-diffeomorphism $\phi_0 \in C^{1+\alpha}(\mathbb{S}^1_{2\pi}; \mathbb{S}^1_{2\pi})$ satisfies $\partial_u \phi_0 (u) > 0$ for $u \in \mathbb{S}^1_{2\pi}$. 
Let also $\phi \in C([0, \infty); C^1(\mathbb{S}^1_{2\pi}; \mathbb{S}^1_{2\pi})) \cap C^\infty(\mathbb{S}^1_{2\pi} \times (0, \infty); \mathbb{S}^1_{2\pi})$ be a solution to \eqref{deri-initial-pro}. 
Then, $\phi(\cdot, t): \mathbb{S}^1_{2\pi} \to \mathbb{S}^1_{2\pi}$ is a diffeomorphism and it holds that 
\[ e^{-M(t)} \left(\min_{u \in \mathbb{S}^1_{2\pi}} \partial_u \phi_0 (u) \right) \le \partial_u \phi(u,t) \le e^{M(t)} \left(\max_{u \in \mathbb{S}^1_{2\pi}} \partial_u \phi_0(u)\right) \quad \text{for} \; \; (u, t) \in \mathbb{S}^1_{2\pi} \times [0, \infty), \]
where 
\[ M(t) = \int_0^t \left(\max_{u \in \mathbb{S}^1_{2\pi}} \partial_u F(u, \tau)\right) \; d\tau. \]
\end{prop}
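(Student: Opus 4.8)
The plan is to reduce the whole statement to a two-sided pointwise bound for $v:=\partial_u\phi$. Once $v(\cdot,t)>0$ on $\mathbb S^1_{2\pi}$ is known, $\phi(\cdot,t)$ is an immersion of the circle, and since $\phi(\cdot,t)$ depends continuously on $t$ (in $C^1$, hence in $C^0$) and $\phi(\cdot,0)=\phi_0$ is an orientation‑preserving $C^1$‑diffeomorphism, $\phi(\cdot,t)$ has degree $1$ for every $t$; a degree‑one immersion of $\mathbb S^1_{2\pi}$ into itself is a diffeomorphism. So it suffices to produce the asserted exponential bounds on $v$. A preliminary observation I would record is that on each compact slab $\mathbb S^1_{2\pi}\times[0,T]$ the quantity $\ell(\phi(u,t),t)$ stays between two positive constants: $\ell$ extends continuously to $t=0$ and is strictly positive there by $\ell$‑convexity of the flow, and $\phi(\cdot,t)$ takes values in $\mathbb S^1_{2\pi}$; this is what makes the equation below uniformly parabolic for $t$ in a bounded range, as long as $v$ stays away from $0$ and $\infty$.

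Next I would differentiate the first equation in \eqref{deri-initial-pro} with respect to $u$. With $\partial_u^2\phi=\partial_u v$ this gives, at points where $v>0$,
\[ \partial_t v \;=\; \frac{1}{v^2\,\ell(\phi,t)^2}\,\partial_u^2 v \;+\; \partial_u\!\Big(\tfrac{1}{v^2\,\ell(\phi,\cdot)^2}\Big)\,\partial_u v \;-\; (\partial_u F)(\phi,t)\,v, \]
a parabolic equation for $v$ with strictly positive leading coefficient and, crucially, \emph{no zeroth-order source term}: the only zeroth-order contribution is the linear term $-(\partial_u F)(\phi,t)\,v$. Since $\phi(\cdot,t)$ is onto $\mathbb S^1_{2\pi}$ one has the pointwise estimate $-(\partial_u F)(\phi(u,t),t)\ge-\max_{\mathbb S^1_{2\pi}}\partial_u F(\cdot,t)=-M'(t)$. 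Consequently the spatially constant function $\underline v(t):=\big(\min_{\mathbb S^1_{2\pi}}\partial_u\phi_0\big)\,e^{-M(t)}$ satisfies $\underline v'(t)=-M'(t)\underline v(t)\le-(\partial_u F)(\phi,t)\,\underline v(t)$, i.e.\ it is a subsolution of the equation above with $\underline v(0)\le v(\cdot,0)$, so the comparison (maximum) principle yields $v\ge\underline v$; the upper bound follows symmetrically with a spatially constant supersolution. To make the comparison rigorous without invoking a general theorem for the quasilinear equation, I would instead argue with the Dini derivative of the Lipschitz function $t\mapsto\min_{u}v(u,t)$: at a spatial minimizer $u_*$ one has $\partial_u v(u_*,t)=0$ and $\partial_u^2 v(u_*,t)\ge0$, so the displayed equation forces $\tfrac{d}{dt}\min_u v(u,\cdot)\ge-M'(t)\,\min_u v(u,\cdot)$, and integrating gives the lower bound; the maximum is handled the same way.

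The points needing care are the following. First, parabolicity of the $v$‑equation requires $v>0$, which is part of the conclusion, so the estimate must be set up as a continuation argument: $v>0$ holds on $[0,\varepsilon)$ by continuity from $\partial_u\phi_0>0$, on that interval the barrier bounds give $v\ge\underline v>0$ with constants that do not deteriorate as $\varepsilon$ grows, hence $v$ can neither vanish nor blow up in finite time and the bounds propagate to all of $[0,\infty)$. Second, and this is the main technical obstacle, the regularity is low at $t=0$: $\phi$ is only $C^1$ in $u$ up to $t=0$ (and $C^{1+\alpha}$ initially), so $v$ is merely continuous there and the differentiated equation is available only for $t>0$; one therefore proves the estimate on $(0,T]$ and passes to the limit $t\downarrow0$, using $\phi\in C([0,\infty);C^1)$ to identify the endpoint values with $\partial_u\phi_0$. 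As an alternative that sidesteps the quasilinear bootstrap, passing to the inverse diffeomorphism $\psi(\cdot,t):=\phi(\cdot,t)^{-1}$ turns \eqref{deri-initial-pro} into the genuinely linear, uniformly parabolic equation $\partial_t\psi=\ell(y,t)^{-2}\,\partial_y^2\psi+F(y,t)\,\partial_y\psi$; running the same maximum-principle argument on $\partial_y\psi$ and inverting gives the bounds on $\partial_u\phi=(\partial_y\psi)^{-1}\!\circ\phi$, at the cost of first having to know that $\phi(\cdot,t)$ is invertible — which again is exactly the continuation argument above.
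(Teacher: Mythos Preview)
Your approach is essentially the paper's: differentiate \eqref{deri-initial-pro} in $u$ to obtain a parabolic equation for $v=\partial_u\phi$ and apply the maximum principle for the two-sided bound. The only notable difference is in concluding that $\phi(\cdot,t)$ is a diffeomorphism: you use a topological degree argument, whereas the paper integrates the $v$-equation over $\mathbb{S}^1_{2\pi}$, recognises the integrand as a total $u$-derivative, and deduces $\int_{\mathbb{S}^1_{2\pi}} v(\cdot,t)\,du\equiv 2\pi$; both routes are valid. (One minor slip: your appeal to ``$\phi(\cdot,t)$ is onto'' when bounding $-(\partial_u F)(\phi,t)$ is circular at that stage of the argument, but also unnecessary --- the inequality holds simply because $\phi(u,t)\in\mathbb{S}^1_{2\pi}$.)
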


\begin{proof}
Let $v(u,t) = \partial_u \phi(u, t)$. 
Differentiating the parabolic differential equation of \eqref{deri-initial-pro} with respect to $u$, we have 
\begin{equation}\label{eq-gradient}
\begin{aligned} 
\partial_t v(u, t) =&\; \frac{\partial_u^2 v(u, t)}{(v(u, t))^2 (\ell(\phi(u, t), t))^2} - \frac{2 (\partial_u v(u,t))^2}{(v(u, t))^3 (\ell(\phi(u, t), t))^2} \\
&\; - \frac{2 \partial_u \ell(\phi(u, t), t) \partial_u v(u, t)}{v(u,t) (\ell(\phi(u, t), t))^3} - \partial_u F(\phi(u, t), t) v(u, t). 
\end{aligned}
\end{equation}
Therefore, the maximum principle (cf.\ \cite[Theorem 2.1 in Chapter 1]{LSU}) yields the gradient estimate. 

Since the gradient estimate yields $\partial_u \phi(u, t) > 0$ for $(u, t) \in \mathbb{S}^1_{2\pi} \times (0, \infty)$, we can see that $\phi(\cdot, t)$ is a diffeomorphism by showing 
\[ \phi(2\pi, t) - \phi(0, t) = \int_{\mathbb{S}^1_{2\pi}} v(u, t) \; du = 2\pi \quad \text{for} \; \; t \ge 0. \]
Since $\phi_0: \mathbb{S}^1_{2\pi} \to \mathbb{S}^1_{2\pi}$ is a $C^1$-diffeomorphism, we have 
\begin{equation}\label{apriori-gradient1}
\int_{\mathbb{S}^1_{2\pi}} v(u, 0) \; du = 2\pi. 
\end{equation}
On the other hand, \eqref{eq-gradient} yields 
\begin{align*}
\partial_t \int_{\mathbb{S}^1_{2\pi}} v(u, t) \; du = &\; \int_{\mathbb{S}^1_{2\pi}} \frac{\partial_u^2 v(u, t)}{(v(u, t))^2 (\ell(\phi(u, t), t))^2} - \frac{2 (\partial_u v(u,t))^2}{(v(u, t))^3 (\ell(\phi(u, t), t))^2} \\
&\; \qquad - \frac{2 \partial_u \ell(\phi(u, t), t) \partial_u v(u, t)}{v(u,t) (\ell(\phi(u, t), t))^3} - \partial_u F(\phi(u, t), t) v(u, t) \; du \\
=&\; \left[ \frac{\partial_u^2 \phi(u, t)}{(\partial_u \phi(u, t))^2 (\ell(\phi(u, t), t))^2} - F(\phi(u, t), t) \right]_{u=0}^{2\pi} = 0. 
\end{align*}
Therefore, \eqref{apriori-gradient1} implies $\int_{\mathbb{S}^1_{2\pi}} v(u, t) \; du = 2\pi$ for $t \ge 0$. 
\end{proof}

Due to the a priori estimate in Proposition \ref{prop:apriori-grqdient}, we can apply a standard existence theory for quasi-linear parabolic equations to prove Proposition \ref{prop:re-para-time} as follows: 

\begin{proof}[Proof of Proposition \ref{prop:re-para-time}] 
Due to Proposition \ref{prop:re-para}, there exist $n\in \mathbb{N}$ and a $C^1$-diffeomorphism $\phi_0 \in C^{1+\alpha}(\mathbb{S}^1_{2\pi}; \mathbb{S}^1_{2\pi})$ such that $(\tilde{X}_0, \tilde{\nu}_0) := (X_0 \circ \phi_0, \nu_0 \circ \phi_0)$ satisfies 
\[ \tilde{\nu}_0(u) = \begin{pmatrix}
\sin(nu) \\
-\cos(nu)
\end{pmatrix} \quad \text{for} \; \; u \in \mathbb{S}^1_{2\pi}. \]
Then, since Proposition \ref{prop:apriori-grqdient} and the $\ell$-convexity imply uniformly parabolicity of \eqref{deri-initial-pro}, a standard existence theory for quasi-linear parabolic equations (cf.\ \cite[Section 6 in Chapter XII]{Li}) yields that a global-in-time solution $\phi \in C([0, \infty); C^1(\mathbb{S}^1_{2\pi}; \mathbb{S}^1_{2\pi})) \cap C^\infty(\mathbb{S}^1 \times (0, \infty); \mathbb{S}^1_{2\pi})$ to \eqref{deri-initial-pro} uniquely exists. 
Proposition \ref{prop:apriori-grqdient} also implies that $\phi(\cdot, t):\mathbb{S}^1_{2\pi} \to \mathbb{S}^1_{2\pi}$ is a diffeomorphism for $t \ge 0$. 
The calculation as in \eqref{deri-PDE} shows that the re-formulated Legendre curve $(\tilde{X}, \tilde{\nu})$ satisfies the latter equality in \eqref{re-para-time}. 
The former one in \eqref{re-para-time} follows from the choice of $\phi_0$. 
\end{proof}

In order to prove Theorem \ref{thm:initial problem}, due to Proposition \ref{prop:re-para-time}, it is sufficient to study on inverse curvature flows satisfying 
\begin{equation}\label{re-flow-eq} 
\partial_t X = \frac{\beta}{\ell} \nu + \frac{\partial_u \beta}{\ell^2} \mu \quad \text{on} \; \; \mathbb{S}^1_{2\pi} \times (0, \infty) 
\end{equation}
and assuming that the initial Legendre curve $(X_0, \nu_0)$ satisfies 
\begin{equation}\label{initial-as} 
\nu_0(u) = \begin{pmatrix}
\sin(nu) \\
-\cos(nu)
\end{pmatrix} \quad \text{for} \; \; u \in \mathbb{S}^1_{2\pi} 
\end{equation}
for some $n \in \mathbb{N}$. 
Proposition \ref{prop:eq-beta} and \ref{prop:eq-l} also yield that the flow satisfies 
\begin{align} 
& \partial_t \beta = \partial_u \left(\frac{\partial_u \beta}{\ell^2}\right) + \beta \quad \text{on} \; \; \mathbb{S}^1_{2\pi} \times (0, \infty), \label{eq-flow-beta}\\
& \partial_t \ell = \partial_u \left(\frac{\partial_u l}{\ell^2} \right) \quad \text{on} \; \; \{(u,t) \in \mathbb{S}^1_{2\pi} \times (0, \infty): \partial_u \beta(u, t) \neq 0\}. \label{eq-flow-l}
\end{align}
Therefore, in order to prove the unique existence result stated in Theorem \ref{thm:initial problem}, we first solve an initial value problem of a system corresponding to the above equations, and we then construct the flow $(X, \nu)$ from its solution $(\ell, \beta)$. 
We finally prove the construction of $(X, \nu)$ from $(\ell, \beta)$ yields the uniqueness of the flow. 
The properties on the zero points of $\beta(\cdot, t)$ stated in Theorem \ref{thm:initial problem} follows from the zero number diminishing property for parabolic equations (cf.\ \cite{An, Ma}). 

We first present the unique existence theory for the following system: 
\begin{numcases}{}
\partial_t \beta = \partial_u \left(\tfrac{\partial_u \beta}{\ell^2}\right) + \beta & \text{on $\mathbb{S}^1_{2\pi} \times (0, \infty)$}, \label{sys-beta}\\
\partial_t \ell = \partial_u \left(\tfrac{\partial_u \ell}{\ell^2} \right) & \text{on $\mathbb{S}^1_{2\pi} \times (0, \infty)$}, \label{sys-l}\\
\beta(\cdot, 0) = \beta_0, \; \ell(\cdot, 0) \equiv n & \text{on $\mathbb{S}^1_{2\pi}$}. \label{sys-initial}
\end{numcases}
Here, $\beta_0 \in C^\alpha(\mathbb{S}^1_{2\pi})$ and $n \in \mathbb{N}$ correspond to the Legendre curvature of $(X_0, \nu_0) \in C^{1+\alpha}(\mathbb{S}^1_{2\pi}; \mathbb{R}^2 \times \mathbb{S}^1)$. 

\begin{prop}\label{prop:existence-system}
Let $\beta_0 \in C^\alpha(\mathbb{S}^1_{2\pi})$ be a non-zero function and $n \in \mathbb{N}$. 
Then, there exists a unique global-in-time solution $(\ell, \beta) \in C(\mathbb{S}^1_{2\pi} \times [0, \infty); \mathbb{R}^2) \cap C^\infty(\mathbb{S}^1_{2\pi} \times (0, \infty); \mathbb{R}^2)$ to \eqref{sys-beta}--\eqref{sys-initial} whose concrete representation is given by 
\begin{equation} \label{ex-form-l-beta} 
\ell(u, t) = n, \; \; \beta(u,t) = e^{t} a_0 + \sum_{k=1}^\infty e^{(1-\frac{k^2}{n^2})t} \left(a_k \cos(ku) + b_k \sin(ku)\right) 
\end{equation}
for $(u,t) \in \mathbb{S}^1_{2\pi} \times [0, \infty)$, where $\{a_k\}_{k \in \mathbb{N} \cup \{0\}}$ and $\{b_k\}_{k \in \mathbb{N}}$ are respectively the family of the Fourier cosine coefficient and Fourier sine coefficient of $\beta_0$. 

Furthermore, the zero points of $\beta(\cdot, t)$ are isolated for any $t>0$ and the number of the zero points $z(t)$ of $\beta(\cdot, 0)$ is non-increasing with respect to $t > 0$. 
Moreover, if $\beta(u_0, t_0) = 0$ and $\partial_u \beta(u_0, t_0) = 0$ at $(u_0, t_0) \in \mathbb{S}^1_{2\pi} \times (0, \infty)$, then $z(t)$ is strictly decreasing at $t=t_0$ in the sense stated in Theorem \ref{thm:initial problem}. 
\end{prop}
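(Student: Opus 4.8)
The plan is to exploit the fact that the system \eqref{sys-beta}--\eqref{sys-initial} decouples. The equation \eqref{sys-l} for $\ell$ is closed and is solved by the constant $\ell \equiv n$, which matches the initial datum $\ell(\cdot,0)\equiv n$; since $n>0$, equation \eqref{sys-l} is uniformly parabolic near this constant solution, so the maximum principle keeps any solution positive and standard parabolic uniqueness (cf.\ \cite{Li, LSU}) forces $\ell \equiv n$. Substituting $\ell\equiv n$ into \eqref{sys-beta} turns it into the linear equation $\partial_t\beta = \tfrac{1}{n^2}\partial_u^2\beta + \beta$ on $\mathbb{S}^1_{2\pi}\times(0,\infty)$ with $\beta(\cdot,0)=\beta_0$. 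First I would solve this by Fourier's method: each mode of the expansion $\beta_0 = a_0 + \sum_{k\ge1}(a_k\cos ku + b_k\sin ku)$ evolves by the scalar factor $e^{(1-k^2/n^2)t}$, which yields precisely \eqref{ex-form-l-beta}. For $t>0$ the Gaussian decay $e^{-k^2t/n^2}$ of these factors, together with the boundedness of $\{a_k\},\{b_k\}$ (from $\beta_0\in C^\alpha\subset L^2$), makes the series and all its $u$- and $t$-derivatives converge absolutely and uniformly on $\mathbb{S}^1_{2\pi}\times[\delta,\infty)$ for every $\delta>0$; hence $\beta\in C^\infty(\mathbb{S}^1_{2\pi}\times(0,\infty))$ solves the equation classically, and $\beta(\cdot,t)\to\beta_0$ as $t\to0^+$ by convergence of the Fourier series of $\beta_0\in C^\alpha$. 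Uniqueness in the class $C(\mathbb{S}^1_{2\pi}\times[0,\infty))\cap C^\infty(\mathbb{S}^1_{2\pi}\times(0,\infty))$ is the maximum principle for this linear equation. This proves the first paragraph of the statement.

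For the second paragraph I would apply the zero-number (Sturmian) theory for one-dimensional linear parabolic equations of Angenent \cite{An} and Matano \cite{Ma} to $v=\beta$, which solves $\partial_t v = \tfrac{1}{n^2}\partial_u^2 v + v$ with smooth coefficients on the circle. The only input needed is that $\beta(\cdot,t)\not\equiv0$ for every $t$: since $\beta_0\not\equiv0$, at least one of $a_0$ and the pairs $(a_k,b_k)$ is non-zero, and the corresponding mode carries a never-vanishing exponential factor, so $\beta(\cdot,t)$ is a non-trivial real-analytic function of $u$ for $t>0$ and its zero set is therefore finite, i.e.\ $z(t)<\infty$ and the zeros are isolated. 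The zero-number theorem then gives that $z(t)$ is non-increasing, and that $z$ drops strictly across any time $t_0$ at which $\beta(\cdot,t_0)$ has a multiple zero, i.e.\ a point $u_0$ with $\beta(u_0,t_0)=\partial_u\beta(u_0,t_0)=0$; this is exactly the claimed $z(t_0+\varepsilon)<z(t_0-\delta)$ for all $\varepsilon,\delta>0$.

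I expect the genuinely delicate points to be bookkeeping rather than conceptual. The two items requiring care are: (a) verifying that \eqref{ex-form-l-beta} really delivers a classical solution attaining the $C^\alpha$ datum continuously — a routine Fourier estimate, but one should record the precise convergence used for the smoothing and the $t\to0^+$ limit; and (b) matching the exact hypotheses of the zero-number theorem in \cite{An} on the periodic domain, in particular that the non-degeneracy conclusion ``multiple zero at $t_0$ implies a strict drop of $z$'' applies here — this is immediate since the coefficients $1/n^2$ and $1$ are constant and smooth. The step that $\ell\equiv n$ is forced is the one place where one must stay within the regularity class where the quasilinear equation \eqref{sys-l} is well-posed, but the a priori positivity of $\ell$ from the maximum principle makes this standard.
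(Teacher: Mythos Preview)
Your proposal is correct and follows essentially the same route as the paper: first force $\ell\equiv n$ from \eqref{sys-l} with constant initial datum, reduce \eqref{sys-beta} to the linear equation $\partial_t\beta=\tfrac{1}{n^2}\partial_u^2\beta+\beta$, obtain \eqref{ex-form-l-beta} by Fourier's method, and then invoke the Angenent--Matano zero-number theory for the claims on zeros. Your write-up is if anything a bit more explicit than the paper's (e.g.\ you spell out why $\beta(\cdot,t)\not\equiv0$ and why the series converges in $C^\infty$ for $t>0$), but the argument is the same.
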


\begin{proof}
By a standard existence theory for quasi-linear parabolic equations, the equation \eqref{sys-l} with $\ell(\cdot, 0) \equiv n$ has a smooth unique solution and it is obviously $\ell \equiv n$ on $\mathbb{S}^1_{2\pi} \times [0, \infty)$. 
Then, \eqref{sys-beta} can be re-written as 
\begin{equation}\label{sys-beta2}
\partial_t \beta = \frac{1}{n^2} \partial_u^2 \beta + \beta \quad \text{on} \; \; \mathbb{S}^1_{2\pi} \times (0, \infty). 
\end{equation}
Therefore, the maximum principle yields a priori estimate 
\[ |\beta(u,t)| \le e^t \sup_{u \in \mathbb{S}^1_{2\pi}} |\beta_0(u)| \quad \text{for} \; \; (u, t) \in \mathbb{S}^1_{2\pi} \times [0, \infty), \]
and thus the linear equation \eqref{sys-beta2} with the initial condition $\beta(\cdot, 0) = \beta_0$ has a unique solution $\beta \in C(\mathbb{S}^1_{2\pi} \times [0, \infty)) \cap C^\infty(\mathbb{S}^1_{2\pi} \times (0, \infty))$ due to a standard existence theory for linear parabolic equations (cf. \cite[Chapter 5]{Lu}). 
Furthermore, Fourier's method can be applied to obtain the representation of $\beta$ in \eqref{ex-form-l-beta}. 
Indeed, letting $\mathcal{L}: \mathcal{D}(\mathcal{L}) \subset L^2(\mathbb{S}^1_{2\pi}) \to L^2(\mathbb{S}^1_{2\pi})$ be 
\begin{equation}\label{def-operator} 
\mathcal{L} f := \frac{1}{n^2} \partial_u^2 f + f, \quad \mathcal{D}(\mathcal{L}) := H^2(\mathbb{S}^1_{2\pi}), 
\end{equation}
where $H^2$ is the Sobolev space $W^{2, 2}$, the eigenvalues $\{\lambda_k\}_{k \in \mathbb{N}\cup\{0\}}$ of $\mathcal{L}$ and the corresponding eigenfunctions $f_0$ and $\{f_{k, i}\}_{k \in \mathbb{N}, i=1,2}$ are given by 
\begin{equation}\label{eigen}
\lambda_k = 1 - \frac{k^2}{n^2}, \quad f_0(u) = 1, \quad f_{k,1}(u) = \cos (ku), \quad f_{k, 2}(u) = \sin (ku). 
\end{equation}
Thus, a standard Fourier's method (cf.\ \cite[Section 10.1]{St} and \cite[Section 22]{We}) yields the representation of $\beta$ in \eqref{ex-form-l-beta}. 
Therefore, the unique existence and the representation of $(\ell, \beta)$. 
The remained claim on the zero points of $\beta$ can be obtained by applying the zero number diminishing theory as in \cite{An, Ma} to \eqref{sys-beta2}. 
\end{proof}

We next uniquely construct a flow satisfying \eqref{re-flow-eq} from the solution $(\ell, \beta)$ obtained in Proposition \ref{prop:existence-system}. 

\begin{prop}\label{prop:special-flow}
Let $\alpha \in (0, 1)$ and $(X_0, \nu_0) \in C^{1+\alpha}(\mathbb{S}^1_{2\pi}; \mathbb{R}^2 \times \mathbb{S}^1)$ be a Legendre curve satisfying \eqref{initial-as} for some $n \in \mathbb{N}$, such that $X_0$ does not describe a point. 
Then, there exists a unique flow $(X, \nu) \in C([0, \infty); C^1(\mathbb{S}^1_{2\pi}; \mathbb{R}^2 \times \mathbb{S}^1)) \cap C^\infty(\mathbb{S}^1_{2\pi} \times (0, \infty); \mathbb{R}^2 \times \mathbb{S}^1)$ satisfying \eqref{re-flow-eq} and starting from $(X_0, \nu_0)$. 
\end{prop}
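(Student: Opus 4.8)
The plan is to prove existence by constructing the flow explicitly from the solution $(\ell,\beta)$ produced by Proposition \ref{prop:existence-system}, and to prove uniqueness by showing that the Legendre curvature of \emph{any} flow in the stated class satisfying \eqref{re-flow-eq} from $(X_0,\nu_0)$ must be that same $(\ell,\beta)$; this then pins down $\nu$, and finally $X$, via integration of \eqref{re-flow-eq}.

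For existence, let $(\ell_0,\beta_0)$ be the Legendre curvature of $(X_0,\nu_0)$. Since $\nu_0$ has the form \eqref{initial-as} one computes $\ell_0\equiv n$, and $\beta_0\in C^\alpha(\mathbb{S}^1_{2\pi})$ is non-zero because $X_0$ does not describe a point (if $\partial_u X_0\equiv0$ then $X_0$ is a point); so Proposition \ref{prop:existence-system} applies and produces $(\ell,\beta)$ with $\ell\equiv n$ and $\beta$ given by \eqref{ex-form-l-beta}. I would then define the time-independent fields
\[ \nu(u,t):=\begin{pmatrix}\sin(nu)\\-\cos(nu)\end{pmatrix},\qquad \mu(u,t):=J\nu(u,t)=\begin{pmatrix}\cos(nu)\\\sin(nu)\end{pmatrix}, \]
and set
\[ X(u,t):=X_0(u)+\int_0^t\left(\frac{\beta(u,s)}{n}\,\nu(u,s)+\frac{\partial_u\beta(u,s)}{n^2}\,\mu(u,s)\right)ds. \]
The integral converges because the parabolic smoothing estimates $\|\partial_u^j\beta(\cdot,s)\|_\infty\lesssim s^{(\alpha-j)/2}\|\beta_0\|_{C^\alpha}$ for $j\le2$ are integrable in $s$ near $0$. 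Differentiating in $u$ under the integral and using $\partial_u\nu=n\mu$, $\partial_u\mu=-n\nu$ together with $\partial_t\beta=\tfrac{1}{n^2}\partial_u^2\beta+\beta$ (equation \eqref{sys-beta2}), the integrand of $\partial_u X$ becomes $\partial_s(\beta\mu)$; combined with $\partial_u X_0=\beta_0\mu$ and $\beta(\cdot,0)=\beta_0$ this telescopes to $\partial_u X(u,t)=\beta(u,t)\mu(u,t)$. Hence $\langle\partial_u X,\nu\rangle=0$, so $(X,\nu)$ is a family of Legendre curves with moving frame $\{\nu,\mu\}$ and Legendre curvature precisely $(\ell,\beta)$; the continuity and smoothness claims then follow from $\partial_u^k X=\partial_u^{k-1}(\beta\mu)$ and $\partial_t X=\tfrac{\beta}{n}\nu+\tfrac{\partial_u\beta}{n^2}\mu$, which are continuous up to $t=0$ in $C^1$ and smooth for $t>0$. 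Since $\ell\equiv n>0$ the flow is $\ell$-convex, it starts from $(X_0,\nu_0)$, and \eqref{re-flow-eq} holds by construction.

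For uniqueness, let $(X,\nu)$ be any flow in the stated class satisfying \eqref{re-flow-eq} from $(X_0,\nu_0)$, with Legendre curvature $(\ell,\beta)$ and angle function $\Theta$. Here $N=\beta/\ell$ and $T=\partial_u\beta/\ell^2$, so $T\ell-\partial_u N=\beta\,\partial_u\ell/\ell^2$, and substituting this into \eqref{eq-l} gives the identity $\beta^2\big(\partial_t\ell-\partial_u(\partial_u\ell/\ell^2)\big)=0$ on all of $\mathbb{S}^1_{2\pi}\times(0,\infty)$. Because $\beta(\cdot,0)=\beta_0\not\equiv0$, the zero-number theory applied to the parabolic equation \eqref{eq-flow-beta} for $\beta$ shows the zeros of $\beta(\cdot,t)$ are isolated for $t>0$, so $\{\beta\ne0\}$ is dense; by continuity $\partial_t\ell=\tfrac{1}{\ell^2}\partial_u^2\ell-\tfrac{2}{\ell^3}(\partial_u\ell)^2$ then holds everywhere. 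A spatial maximum-principle argument (at a spatial extremum of $\ell(\cdot,t)$ one has $\partial_u\ell=0$, so $\partial_t\ell$ has the sign of $\partial_u^2\ell$; together with $\ell(\cdot,0)\equiv n$) forces $\max_u\ell(\cdot,t)\le n\le\min_u\ell(\cdot,t)$, i.e. $\ell\equiv n$. With $\partial_u\ell\equiv0$, \eqref{eq-theta-t} gives $\beta\,\partial_t\Theta=0$, hence $\partial_t\Theta\equiv0$ by the density of $\{\beta\ne0\}$ and continuity; since $\nu$ is continuous up to $t=0$ with $\nu(\cdot,0)=\nu_0$ of the form \eqref{initial-as}, this means $\nu$ equals the time-independent field above. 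Consequently $\beta$ solves the linear equation \eqref{sys-beta2} with datum $\beta_0$, so it is uniquely the function \eqref{ex-form-l-beta}, and integrating \eqref{re-flow-eq} in $t$ from $X_0$ shows $X$ is uniquely determined; hence $(X,\nu)$ coincides with the flow constructed above.

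I expect the main obstacles to be the two degeneracies. First, in the uniqueness step one must promote identities that hold a priori only on $\{\beta\ne0\}$ (where division by $\beta$ is legitimate in \eqref{eq-l} and \eqref{eq-theta-t}) to identities valid everywhere; this rests on the zeros of $\beta(\cdot,t)$ being isolated for $t>0$, which itself uses that $\beta\not\equiv0$ is propagated from $t=0$. Handling this so as to conclude $\ell\equiv n$ cleanly — the degeneracy of \eqref{eq-l} at the zeros of $\beta$ — is the step I would be most careful about. Second, the low regularity of the data ($\beta_0\in C^\alpha$ only) means the $C([0,\infty);C^1)$ regularity and joint smoothness of $X$ for $t>0$ must be read off from $\partial_u X=\beta\mu$ and $\partial_t X=\tfrac{\beta}{n}\nu+\tfrac{\partial_u\beta}{n^2}\mu$ rather than from termwise differentiation of $\int_0^t\beta\,ds$, whose higher $u$-derivatives are not directly controlled near $s=0$; the integrability of the $s^{(\alpha-j)/2}$ smoothing bounds for $j\le2$ is exactly what is needed to make the construction well defined.
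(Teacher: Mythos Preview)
Your proposal is correct and follows essentially the same strategy as the paper's proof: construct $\nu$ as the time-independent field \eqref{initial-as}, define $X$ by integrating the velocity \eqref{re-flow-eq} from $X_0$, verify $\partial_u X=\beta\mu$ via the telescoping computation using \eqref{sys-beta2}, and for uniqueness use the zero-number theory on \eqref{eq-flow-beta} to make $\{\beta\ne0\}$ dense so that the equation for $\ell$ extends everywhere, forcing $\ell\equiv n$ and hence $\partial_t\nu=0$. The only cosmetic differences are that the paper packages the step $\ell\equiv n$ as the uniqueness assertion of Proposition~\ref{prop:existence-system} rather than spelling out the maximum-principle argument you give, and it does not dwell on the $t\downarrow0$ regularity issues you flag (your integrability remarks there are accurate and a useful addition).
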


\begin{proof}
We first construct $(X, \nu)$. 
The uniqueness will be proved later. 

Let $(\ell_0, \beta_0)$ be the Legendre curvature of $(X_0, \nu_0)$. 
Then, $\ell_0 \equiv n$ on $\mathbb{S}^1_{2\pi}$ and $\beta_0 \in C^{1+\alpha}(\mathbb{S}^1_{2\pi})$. 
Furthermore, since $X_0$ does not describe a point, $\beta_0$ is non-zero function. 
Therefore, due to Proposition \ref{prop:existence-system}, we can obtain the unique solution $(\ell, \beta)$ to \eqref{sys-beta}--\eqref{sys-initial}. 
We define $(X, \nu)$ by 
\begin{align}
&\nu(u, t) := \begin{pmatrix}
\sin(nu) \\
-\cos(nu)
\end{pmatrix} \quad \text{for} \; \; (u, t) \in \mathbb{S}^1_{2\pi} \times [0, \infty), \label{construct-nu}\\
&X(u, t) := \int_0^t \frac{\beta(u, \tau)}{\ell(u, \tau)} \nu(u, \tau) + \frac{\partial_u \beta(u, \tau)}{(\ell(u, \tau))^2} \mu(u, \tau) \; d\tau  + X(u, 0) \quad \text{for} \; \; (u, t) \in \mathbb{S}^1_{2\pi}, \label{construct-X}
\end{align}
where 
\[ \mu(u,t) := J\nu(u, t) = \begin{pmatrix}
\cos(nu) \\
\sin(nu)
\end{pmatrix} \quad \text{for} \; \; (u, t) \in \mathbb{S}^1_{2\pi} \times [0, \infty). \]
Then, we can easily see that 
\[ \partial_u \nu = n \mu = \ell \mu \quad \text{on} \; \; \mathbb{S}^1_{2\pi} \times [0, \infty). \]
Furthermore, due to $\ell \equiv n$ and \eqref{sys-beta}, we have 
\begin{align*}
\partial X(u, t) =&\; \int_0^t \partial_u \left(\frac{\beta(u, \tau)}{\ell(u, \tau)} \nu(u, \tau) + \frac{\partial_u \beta(u, \tau)}{(\ell(u, \tau))^2} \mu(u, \tau)\right) \; d\tau + \partial_u X(u, 0)\\
=&\; \int_0^t \left(\beta(u, \tau) + \frac{\partial_u^2 \beta(u, \tau)}{(\ell(u, \tau))^2} \right) \mu(u, \tau)  \; d\tau + \beta(u, 0) \mu(u, 0) \\
=&\; \int_0^t (\partial_t\beta(u, \tau)) \mu(u, \tau) \; d\tau + \beta(u, 0) \mu(u, 0) = \beta(u, t) \mu(u, t) 
\end{align*}
for $(u, t) \in \mathbb{S}^1_{2\pi} \times (0, \infty)$. 
Therefore, $(X, \nu)$ is a family of Legendre curves and $(\ell, \beta)$ is the Legendre curvature of $(X, \nu)$. 
Moreover, due to the construction of $X$ as in \eqref{construct-X}, $(X, \nu)$ obviously satisfies \eqref{re-flow-eq}, which yields the existence of the flow. 

We next assume that $(\tilde{X}, \tilde{\nu})$ is another flow satisfying \eqref{re-flow-eq} and starting from $(X_0, \nu_0)$ to prove $(\tilde{X}, \tilde{\nu}) \equiv (X, \nu)$ on $\mathbb{S}^1_{2\pi} \times [0, \infty)$. 
Then, the Legendre curvature $(\tilde{\ell}, \tilde{\beta})$ of $(\tilde{X}, \tilde{\nu})$ satisfies \eqref{eq-flow-beta} and \eqref{eq-flow-l}. 
Therefore, the zero number diminishing theory can be applied to \eqref{eq-flow-beta} to see that the zero points of $\tilde{\beta}(\cdot, t)$ are isolated. 
Thus, due to the positivity of $\tilde{\ell}$ and the continuity of $\tilde{\ell}$ and its derivatives, we can see that $\tilde{\ell}$ satisfies \eqref{eq-flow-l} on $\mathbb{S}^1_{2\pi} \times (0, \infty)$. 
Then, the uniqueness result in Proposition \ref{prop:existence-system} yields that $(\tilde{\ell}, \tilde{\beta}) \equiv (\ell, \beta)$. 
Since $\ell \equiv n$ and the zero points of $\beta(\cdot, t)$ are isolated, \eqref{eq-nu} yields $\partial_t \tilde{\nu} = 0$ on $\mathbb{S}^1_{2\pi} \times (0, \infty)$. 
Therefore, $\tilde{\nu}$ is given by \eqref{construct-nu}. 
Since $(\tilde{X}, \tilde{\nu})$ satisfies \eqref{re-flow-eq}, we can also see that $\tilde{X}$ is given by \eqref{construct-X}. 
We thus obtain $(\tilde{X}, \tilde{\nu}) \equiv (X, \nu)$ on $\mathbb{S}^1_{2\pi} \times [0, \infty)$, which complete the proof of the uniqueness of the flow. 
\end{proof}

Theorem \ref{thm:initial problem} can be proved by combining Propositions \ref{prop:tangent-velocity}, \ref{prop:re-para-time}, \ref{prop:existence-system} and \ref{prop:special-flow}. 

\section{Self similar solutions}\label{sec:self similar}

In this section, we study on the self similar solutions as stated in Theorem \ref{thm:self similar}. 
Since the Legendre curvature of self similar solutions satisfying \eqref{n-rotation} is formed by $\ell(u,t) \equiv n$ and $\beta(u,t) = \lambda^*(t) \beta^*(u)$, by substituting it into the differential equation \eqref{sys-beta} of $\beta$, we can obtain an exact representation of $\lambda^*$ and $\beta^*$ due to the spectrum of $\mathcal{L}$ as in \eqref{eigen}. 
The self similar solutions can be obtained in line with the construction method of the flow of Legendre curves from the Legendre curvature as in the proof of Theorem \ref{thm:initial problem}. 
The rigorous proof is given below.

\begin{proof}[Proof of Theorem \ref{thm:self similar}]
Let $(\lambda^*(t) X^*(u), \nu^*(u))$ be a self similar solution satisfying \eqref{n-rotation}. 
We prove that $\lambda^*$ and $X^*$ satisfy \eqref{self-similar-lambda} and \eqref{self-similar-X}, respectively. 
Since the Legendre curvature $(\ell, \beta)$ of $(\lambda^*(t) X^*(u), \nu^*(u))$ is given by 
\[ \ell(u, t) = n, \; \; \beta(u, t) = \lambda^*(t) \beta^*(u) \quad \text{for} \; \; (u, t) \in \mathbb{S}^1_{2\pi}. \]
Therefore, by substituting $\ell \equiv n$ and $\beta = \lambda^* \beta^*$ into \eqref{eq-flow-beta}, we have the following differential equation of $\lambda^*$ and $\beta^*$: 
\begin{equation}\label{eq-self-similar}
(\partial_t \lambda^*(t)) \beta^*(u) = \lambda^*(t) \left(\frac{1}{n^2} \partial_u^2 \beta^*(u) + \beta^*(u)\right) \quad \text{for} \; \; (u, t) \in \mathbb{S}^1_{2\pi} \times (0, \infty). 
\end{equation}
Since the eigenvalues and eigenfunctions of the operator $\mathcal{L}$ defined by \eqref{def-operator} are given by \eqref{eigen}, the solutions to \eqref{eq-self-similar} are given by 
\begin{equation}\label{self-similar1} 
\begin{aligned}
&\lambda^*_{n, m, \lambda_0} (t) = \lambda_0 e^{(1-\frac{m^2}{n^2})t}, \quad \beta^*_{0, C_1, 0} (u) = C_1, \\
&\beta^*_{m, C_1, C_2}(u) = C_1\cos(mu) + C_2 \sin(mu) 
\end{aligned}
\end{equation}
for $m \in \mathbb{N} \cup \{0\}$ and $\lambda_0, C_1, C_2 \in \mathbb{R}$. 
Since our purpose is to construct $X(u,t) = \lambda^*(t) X^*(u)$, $\lambda_0$ hereafter can be normalized so that $\lambda_0 = 1$ without loss of generality, and thus $\lambda^*$ satisfies \eqref{self-similar-lambda}. 
We also hereafter assume $(C_1, C_2) \in \mathbb{R}^2 \setminus\{(0, 0)\}$ when $m \in \mathbb{N}$ and $C_1 \neq 0$ when $m = 0$ for $X^*$ to be not a point. 

We first prove that $\beta^*$ does not satisfy \eqref{self-similar1} with $m = n$. 
Assume $\beta^*$ satisfies \eqref{self-similar1} with $m=n, \lambda_0 = 1$ and $(C_1, C_2) \in \mathbb{R}^2 \setminus\{(0, 0)\}$ for a contradiction. 
Let 
\[ \mu^*(u) := J\nu^*(u) = \begin{pmatrix}
\cos(nu) \\
\sin(nu)
\end{pmatrix} \quad \text{for} \; \; u \in \mathbb{S}^1_{2\pi}. \]
Since $X^*$ is a closed curve, we have 
\begin{equation}\label{self-similar-closed} 
0 = X^*(2\pi) - X^*(0) = \int_{\mathbb{S}^1_{2\pi}} \beta^*(u) \mu^*(u) \; du. 
\end{equation}
On the other hand, substituting the formula \eqref{self-similar1} of $\beta^*$ with $n=m$ into the right hand side, we have 
\[ \int_{\mathbb{S}^1_{2\pi}} \beta^*(u) \mu^*(u) \; du = \int_{\mathbb{S}^1_{2\pi}} (C_1\cos(nu) + C_1\sin(nu)) \begin{pmatrix}
\cos(nu) \\
\sin(nu)
\end{pmatrix} \; du \neq 0, \]
which contradicts to \eqref{self-similar-closed}. 
Therefore, the claim holds. 

We next prove that $X^*$ satisfies \eqref{self-similar-X} when $\lambda^*$ and $\beta^*$ are given by \eqref{self-similar1} with $m \in \mathbb{N}\setminus\{n\}, \lambda_0 = 1$ and $(C_1, C_2) \in \mathbb{R}^2 \setminus\{(0, 0)\}$. 
Since $(\lambda^*(t) X^*(u), \nu^*(u))$ is an inverse curvature flow, the construction \eqref{construct-X} of the inverse curvature flow from its Legendre curvature as in the proof of Proposition \ref{prop:special-flow}, we can see that $X^*$ satisfies
\begin{align*}
\lambda^*(t) X^*(u) =&\; \int_0^t \frac{\lambda^*(\tau) \beta^*(u)}{n} \nu^*(u) + \frac{\lambda^*(\tau) \partial_u \beta^*(u)}{n^2} \mu^*(u) \; d\tau + X^*(u) \\
=&\; (\lambda^*(t) - 1)\Bigg(\frac{1}{n(1-\frac{m^2}{n^2})} (C_1\cos(mu) + C_2\sin(mu)) \begin{pmatrix} 
\sin(nu)\\
-\cos(nu)
\end{pmatrix} \\
&\; \qquad + \frac{m}{n^2(1-\frac{m^2}{n^2})} (-C_1 \sin(mu) + C_2 \cos(mu)) \begin{pmatrix}
\cos(nu)\\
\sin(nu)
\end{pmatrix}\Bigg) + X^*(u). 
\end{align*}
Therefore, we have 
\begin{align*}
X^*(u) = &\; \frac{n}{n^2-m^2} (C_1\cos(mu) + C_2\sin(mu)) \begin{pmatrix} 
\sin(nu)\\
-\cos(nu)
\end{pmatrix} \\
&\; + \frac{m}{n^2-m^2} (-C_1 \sin(mu) + C_2 \cos(mu)) \begin{pmatrix}
\cos(nu)\\
\sin(nu)
\end{pmatrix},
\end{align*}
which yields \eqref{construct-X}. 
When $\lambda^*$ and $\beta^*$ satisfies \eqref{self-similar1} with $m=0$, a similar argument yields \eqref{construct-X} with $m=0$ and $C_2=0$. 

Due to the construction of $\lambda^*$ and $X^*$ as above, we obtain the equivalence stated in Theorem \ref{thm:self similar}. 
\end{proof}

\begin{rmk}\label{rmk:self similar}
Due to the form of $\beta^*$ in \eqref{self-similar1}, we can see that $\beta^*$ has $2m$ zero points if $m \in \mathbb{N}$. 
On the other hand, by using the trigonometric addition formula, $X^*$ can be re-written as 
\begin{align*} 
X^* (u) =&\; \frac{C_1}{2} \begin{pmatrix}
\frac{1}{n+m} \sin(mu + nu) + \frac{1}{n-m} \sin(nu - mu) \\
-\frac{1}{n+m} \cos(nu + mu) - \frac{1}{n-m} \cos(nu - mu)
\end{pmatrix} \\
&\; + \frac{C_2}{2} \begin{pmatrix}
- \frac{1}{n+m} \cos(nu + mu) + \frac{1}{n-m} \cos(nu - mu) \\
- \frac{1}{n+m} \sin(nu + mu) + \frac{1}{n-m} \sin(nu - mu). 
\end{pmatrix}
\end{align*}
Therefore, the greatest common divisor of $n+m$ and $|n-m|$, which is denoted by ${\rm gcd}(n+m, |n-m|)$, represents the number of laps of $X^*$. 
In particular, if both $n$ and $m$ are either even or odd, then the number of laps of $X^*$ is at least $2$. 
These facts show that the number of $(2,3)$-cusps of $X^*$ is given by $2m/{\rm gcd}(n+m, |n-m|)$, and thus the constants $C_1$ and $C_2$ do not affect to the number of the singular cusps of ``image'' of $X^*$. 
See also Figures \ref{figure1}--\ref{figure4} below for examples. 
\end{rmk}

\begin{figure}[h]
\begin{minipage}{0.32 \hsize}
\centering
\includegraphics[width=4.5cm]{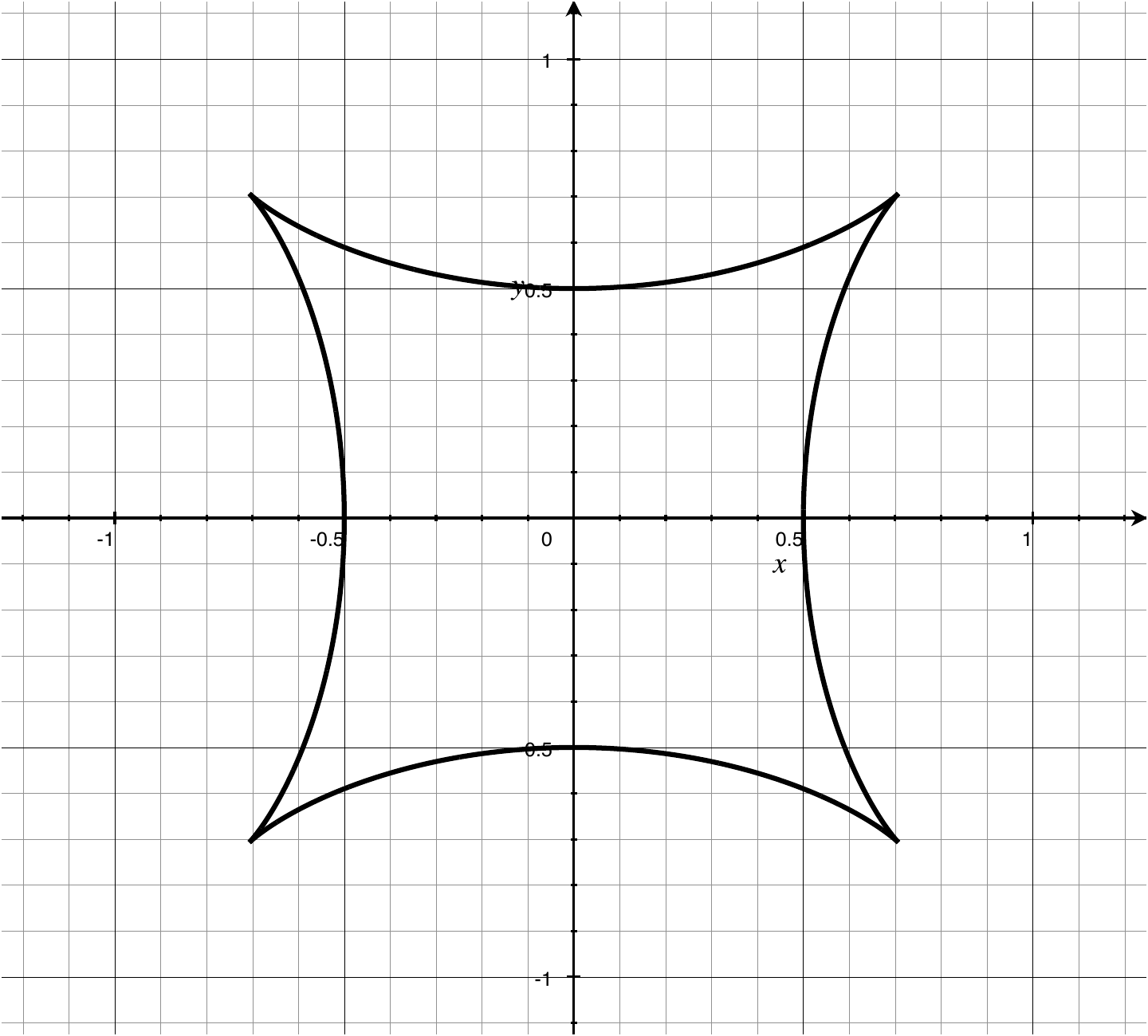}
$m=2, C_1=1.5$
\end{minipage}
\begin{minipage}{0.32 \hsize}
\centering
\includegraphics[width=4.5cm]{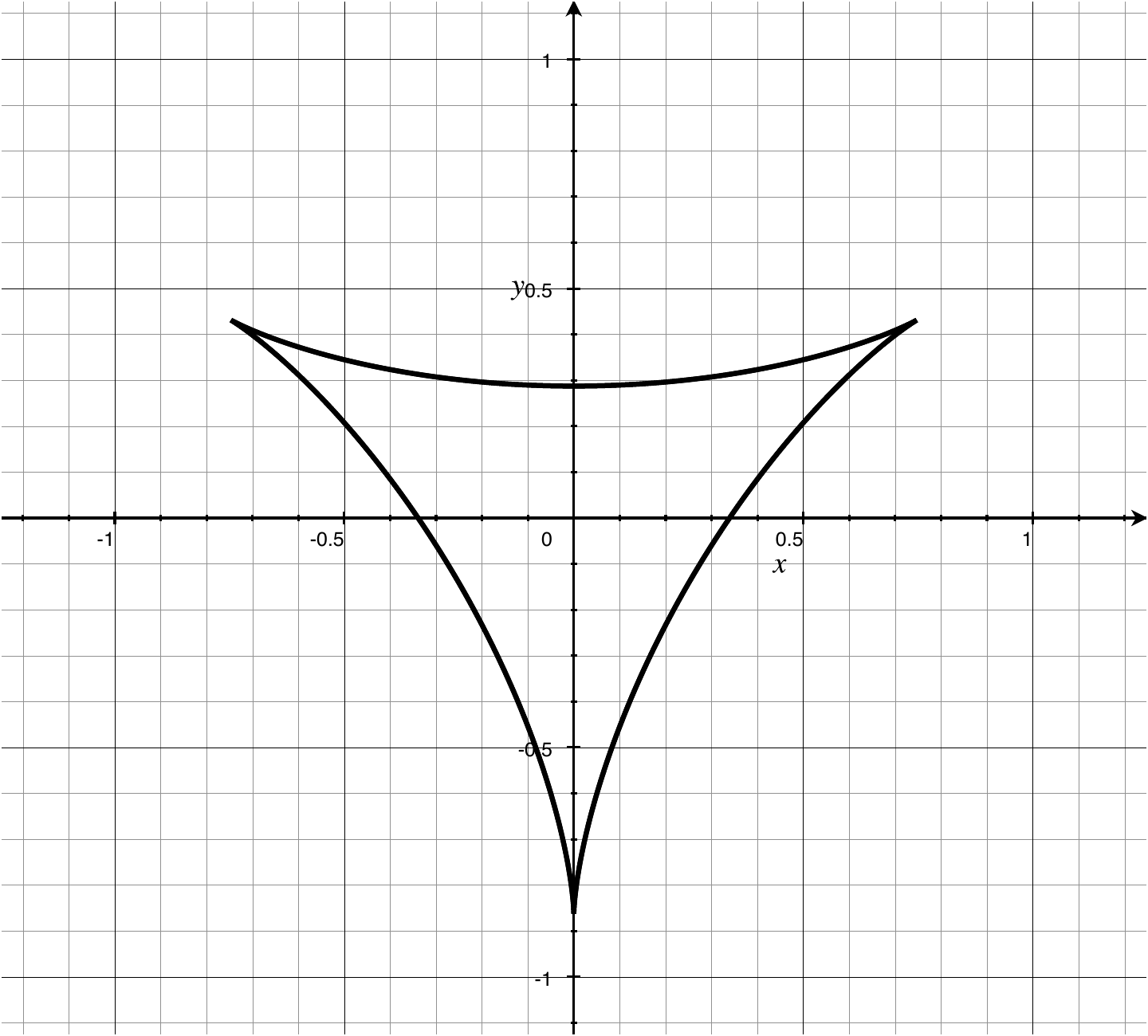}
$m=3, C_1=2.3$
\end{minipage}
\begin{minipage}{0.32 \hsize}
\centering
\includegraphics[width=4.5cm]{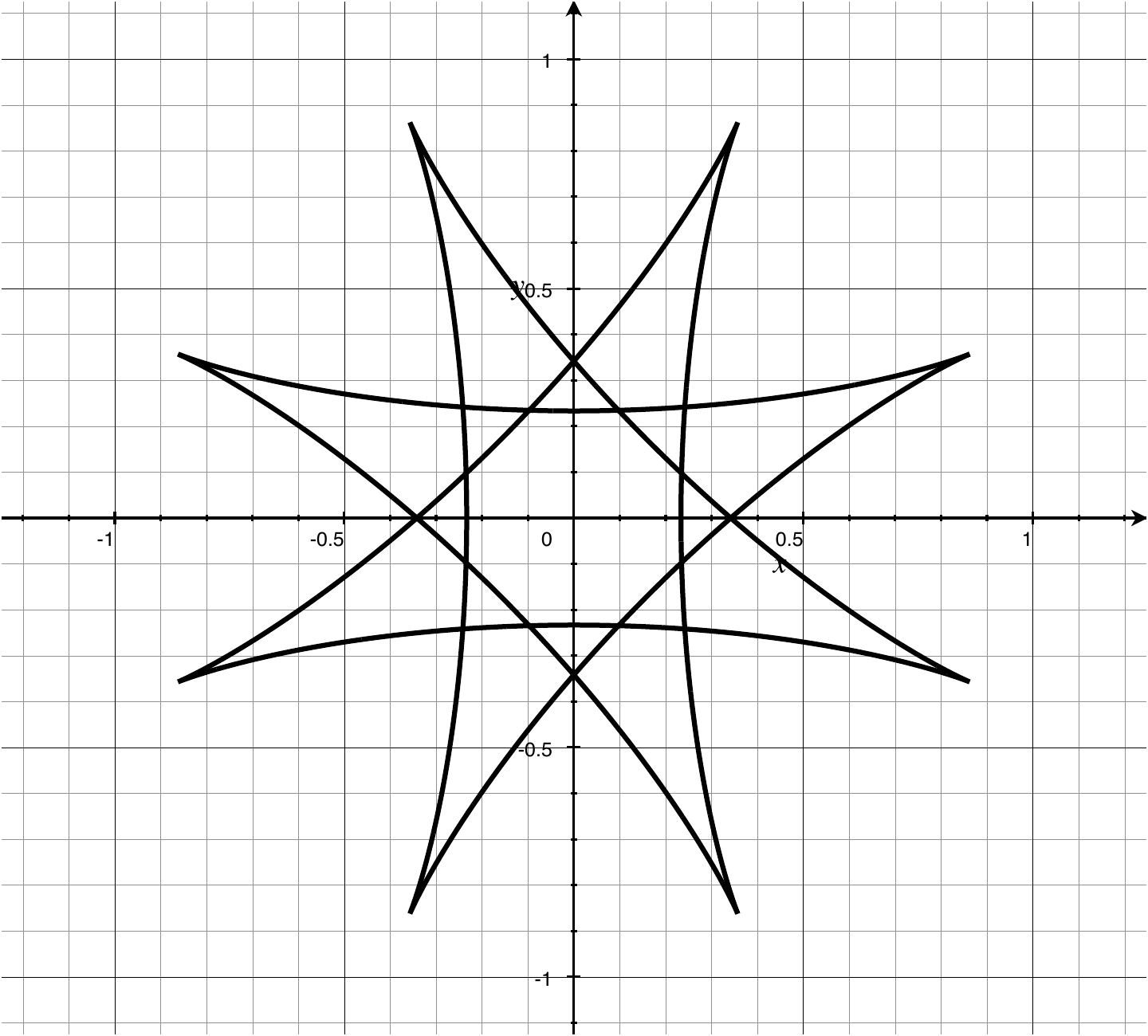}
$m=4, C_1=3.5$
\end{minipage}
\caption{All figures show the image of $X^*$ when $n=1$ and $C_2=0$. The values of the remained parameters are listed below each figure. Only the second one has a lap count of 2.} 
\label{figure1}
\end{figure}

\begin{figure}[h]
\begin{minipage}{0.32 \hsize}
\centering
\includegraphics[width=4.5cm]{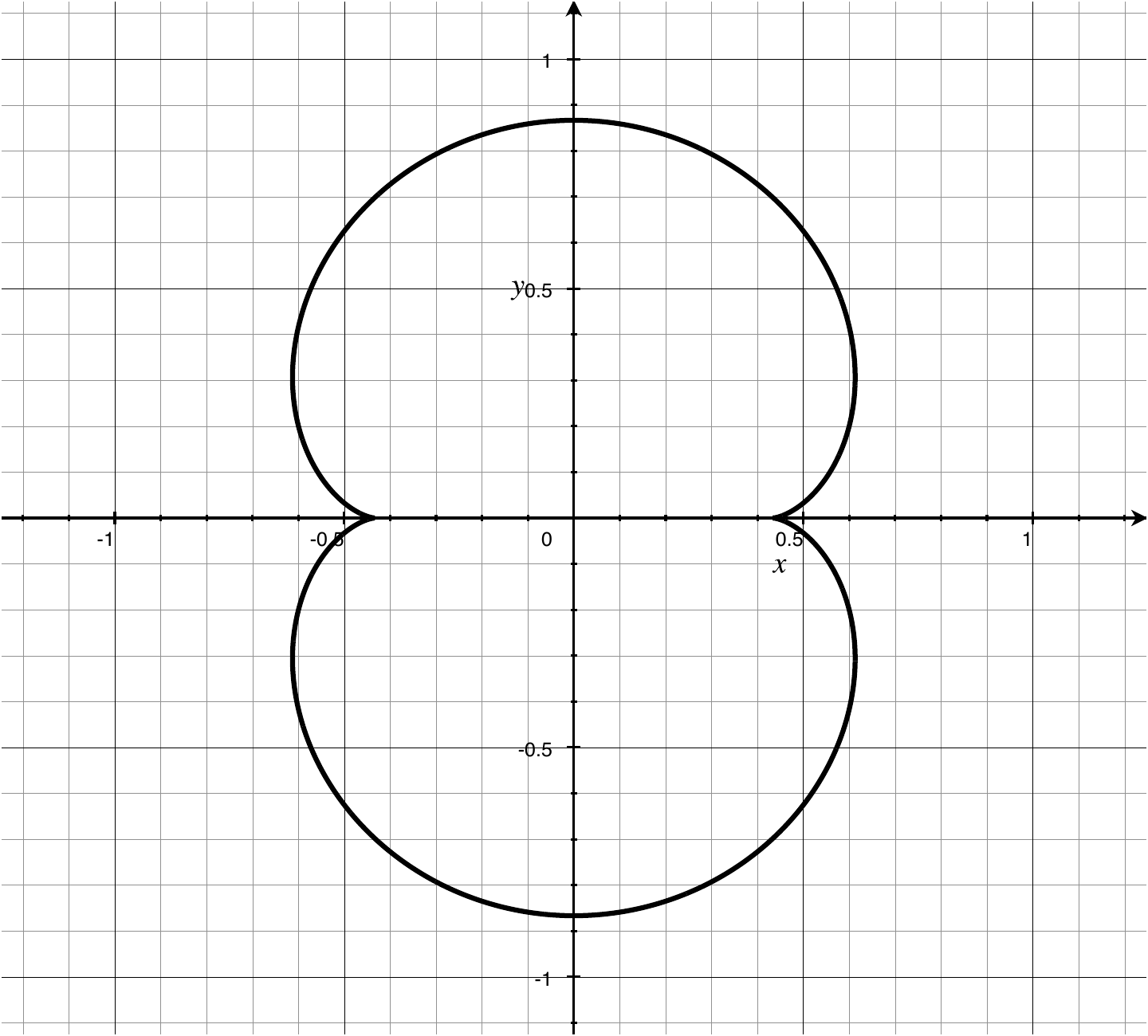}
$m=1, C_1=1.3$
\end{minipage}
\begin{minipage}{0.32 \hsize}
\centering
\includegraphics[width=4.5cm]{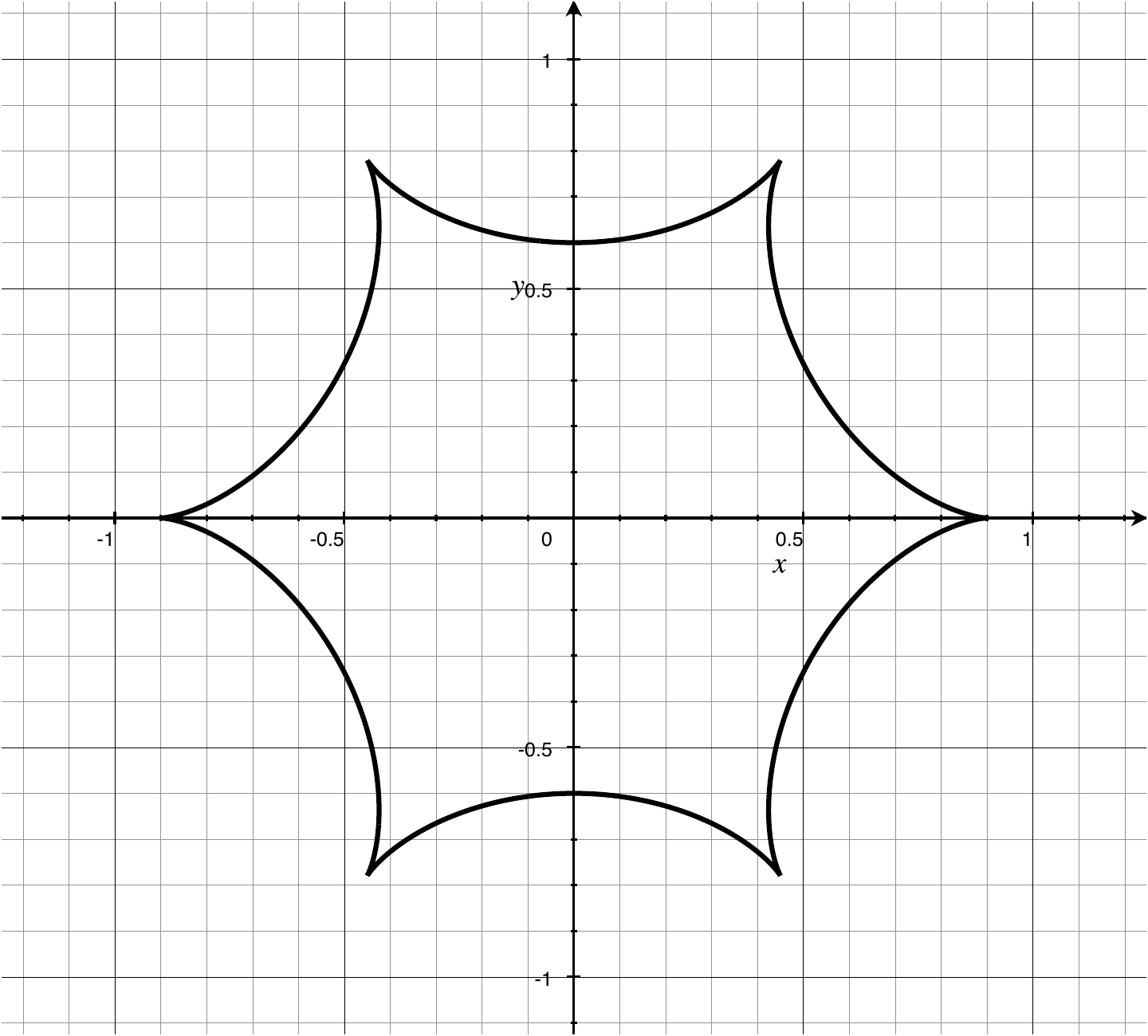}
$m=3, C_1=1.5$
\end{minipage}
\begin{minipage}{0.32 \hsize}
\centering
\includegraphics[width=4.5cm]{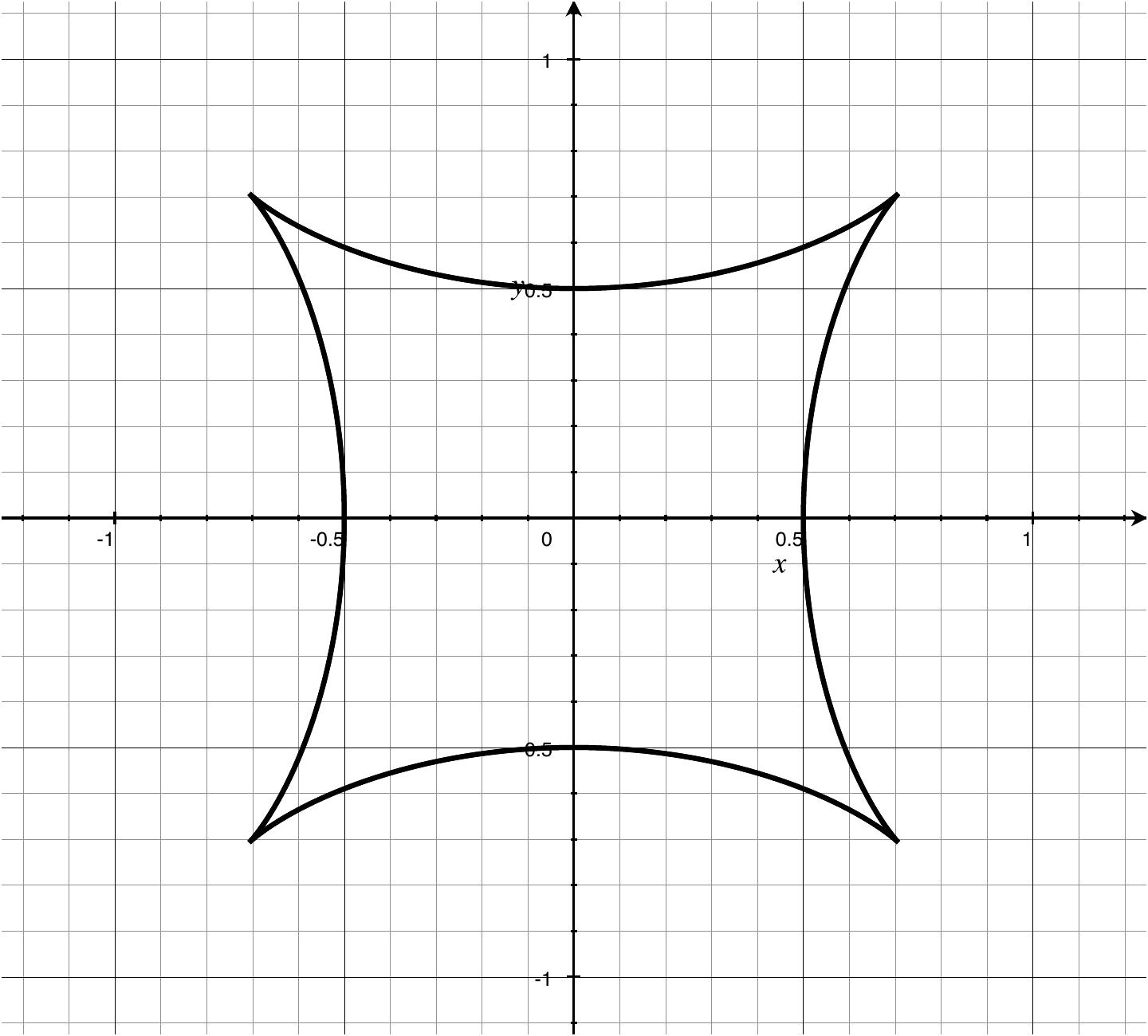}
$m=4, C_1=3$
\end{minipage}
\caption{All figures show the image of $X^*$ when $n=2$ and $C_2=0$. The values of the remained parameters are listed below each figure. Only the third one has a lap count of 2 and the image of $X^*$ coincides with the first one in Figure \ref{figure1}.}
\label{figure2}
\end{figure}

\begin{figure}[h]
\begin{minipage}{0.32 \hsize}
\centering
\includegraphics[width=4.5cm]{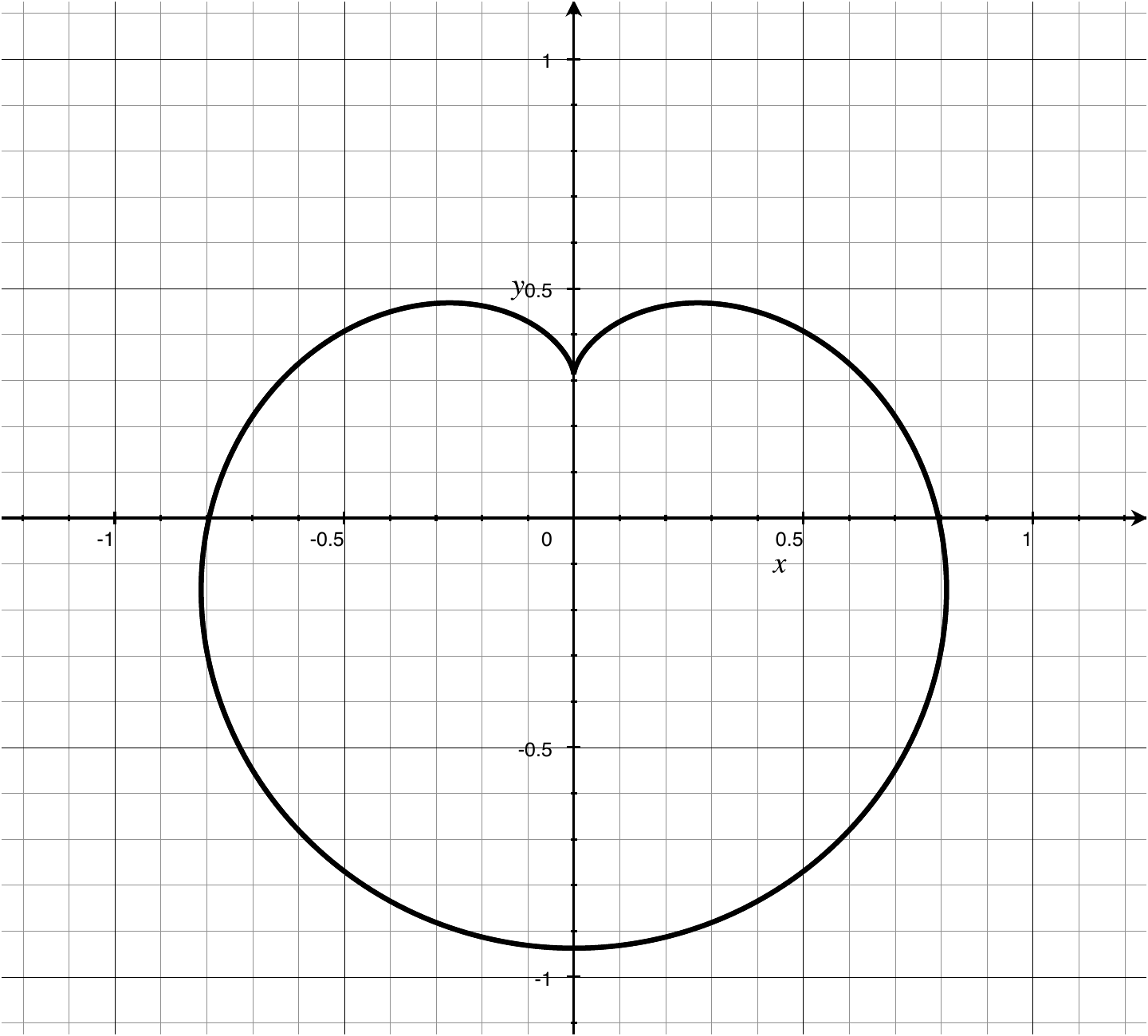}
$m=1, C_1=2.5$
\end{minipage}
\begin{minipage}{0.32 \hsize}
\centering
\includegraphics[width=4.5cm]{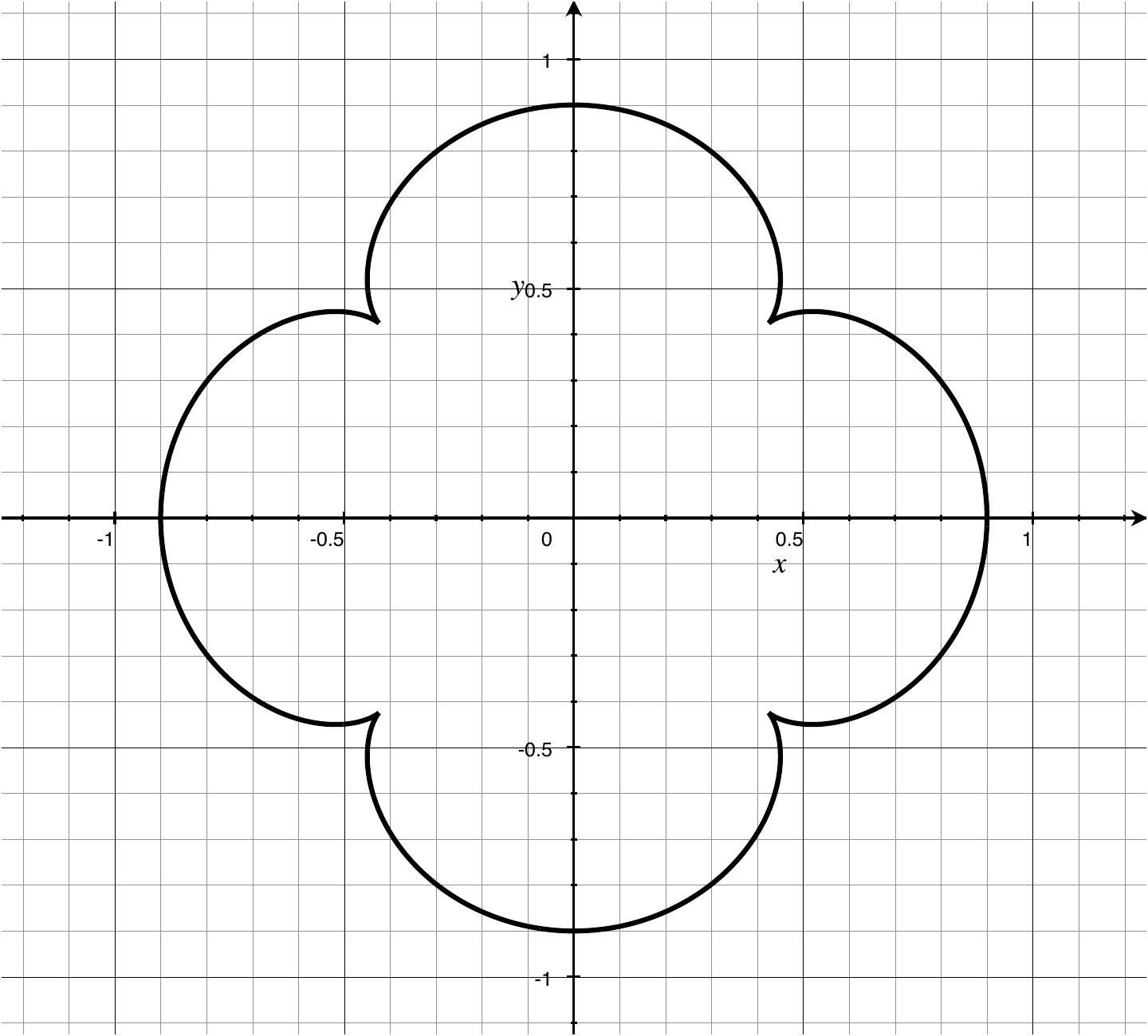}
$m=2, C_1=1.5$
\end{minipage}
\begin{minipage}{0.32 \hsize}
\centering
\includegraphics[width=4.5cm]{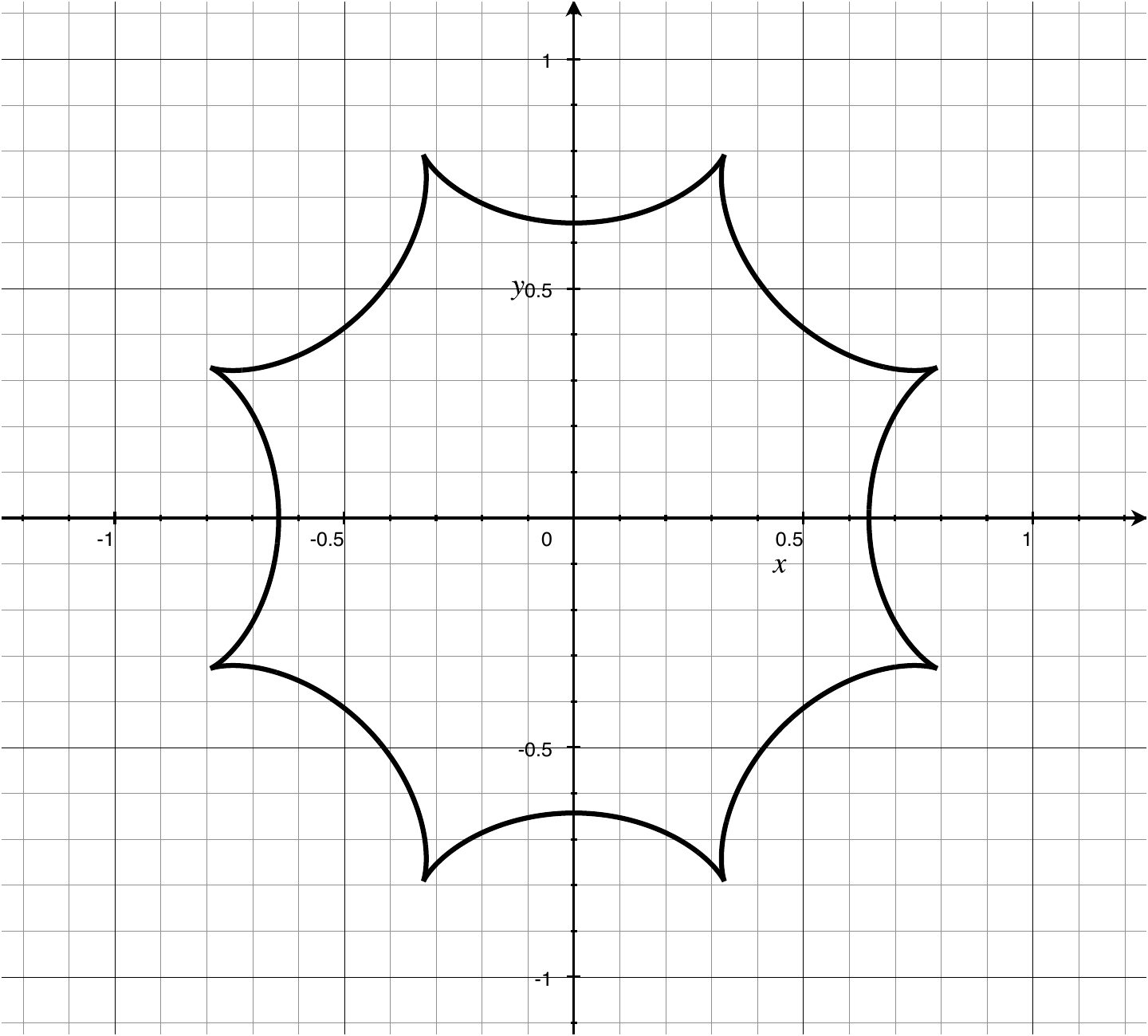}
$m=4, C_1=5$
\end{minipage}
\caption{All figures show the image of $X^*$ when $n=3$ and $C_2=0$. The values of the remained parameters are listed below each figure. Only the first one has a lap count of 2.}
\label{figure3}
\end{figure}

\begin{figure}[h]
\begin{minipage}{0.32 \hsize}
\centering
\includegraphics[width=4.5cm]{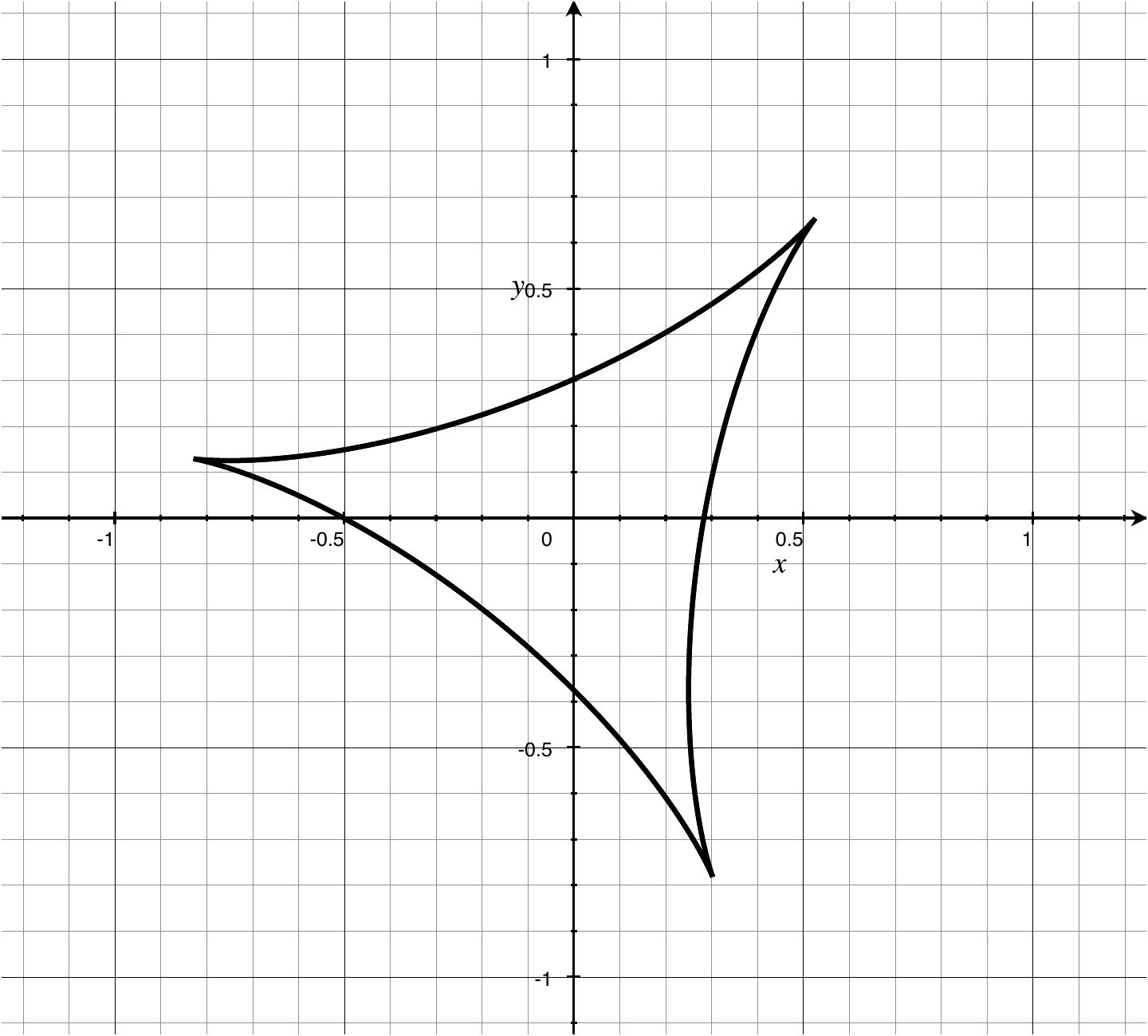}
$C_1=1, C_2=2$
\end{minipage}
\begin{minipage}{0.32 \hsize}
\centering
\includegraphics[width=4.5cm]{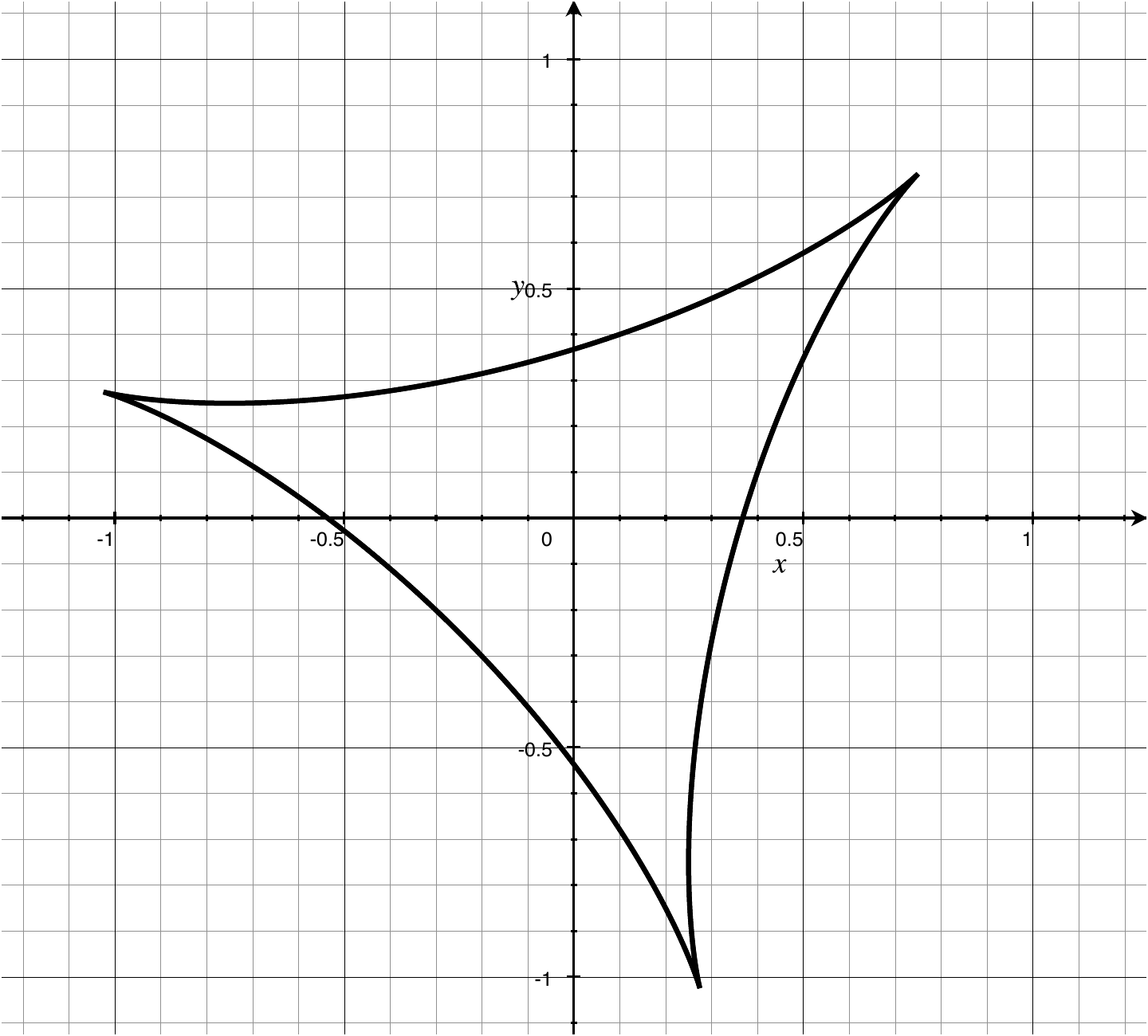}
$C_1=2, C_2=2$
\end{minipage}
\begin{minipage}{0.32 \hsize}
\centering
\includegraphics[width=4.5cm]{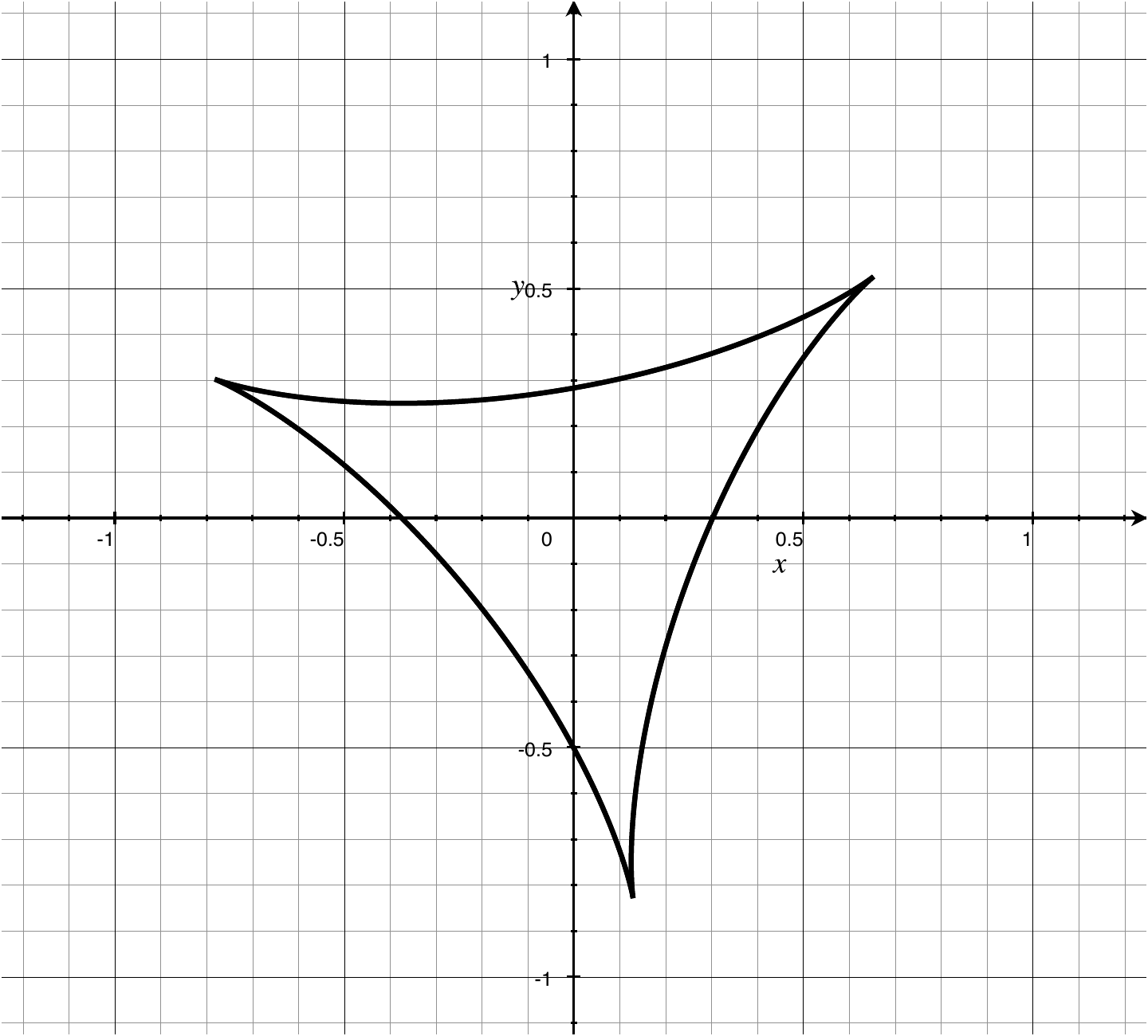}
$C_1=2, C_2 = 1$
\end{minipage}
\caption{All figures show the image of $X^*$ when $n=1$ and $m=3$. Therefore, the number of laps for all curves is 2. To confirm that $C_1$ and $C_2$ do not affect ``visual'' singular cusps, the images output the curves with the values of the remained parameters listed below each figure.}
\label{figure4}
\end{figure}

\section{Asymptotic behavior}\label{sec:asymptotics}

In this section, we analyze the asymptotic behavior of the inverse curvature flow. 
We first present a proposition to prove the statement (i) in Theorem \ref{thm:asymptotics}. 

\begin{prop}\label{prop:fourier-coefficient}
Let $\alpha \in (0,1)$ and $(X_0, \nu_0) \in C^{1+\alpha} (\mathbb{S}^1_{2\pi}; \mathbb{R}^2 \times \mathbb{S}^1)$ be an $\ell$-convex Legendre curve satisfying \eqref{n-rotation} for some $n \in \mathbb{N}$ and $(\ell_0, \beta_0)$ be the Legendre curvature of $(X_0, \nu_0)$. 
Let also $\{a_k\}_{k \in \mathbb{N} \cup \{0\}}$ and $\{b_k\}_{k \in \mathbb{N}}$ be respectively the family of Fourier cosine coefficients and Fourier sine coefficients of $\beta_0$. 
Then, $a_n = b_n = 0$. 
\end{prop}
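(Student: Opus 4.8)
The plan is to exploit the fact that the initial curve $X_0$ is a closed curve together with the specific normalized form \eqref{n-rotation} of $\nu_0$. Since $(X_0,\nu_0)$ is a Legendre curve with Legendre curvature $(\ell_0,\beta_0)$ and $\ell_0\equiv n$ by \eqref{n-rotation}, we have $\partial_u X_0=\beta_0\mu_0$, where
\[
\mu_0(u)=J\nu_0(u)=\begin{pmatrix}\cos(nu)\\ \sin(nu)\end{pmatrix}.
\]
The closedness of $X_0$ forces $\int_{\mathbb{S}^1_{2\pi}}\partial_u X_0\,du=X_0(2\pi)-X_0(0)=0$, hence
\[
\int_{\mathbb{S}^1_{2\pi}}\beta_0(u)\begin{pmatrix}\cos(nu)\\ \sin(nu)\end{pmatrix}du=\begin{pmatrix}0\\0\end{pmatrix}.
\]

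First I would write out the two scalar components of this vector identity. The first component gives $\int_{\mathbb{S}^1_{2\pi}}\beta_0(u)\cos(nu)\,du=0$, which is precisely $a_n=0$ by the definition of the Fourier cosine coefficient in Theorem \ref{thm:asymptotics}. The second component gives $\int_{\mathbb{S}^1_{2\pi}}\beta_0(u)\sin(nu)\,du=0$, which is $b_n=0$. That completes the argument.

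The only subtlety worth addressing is a regularity/justification point rather than a genuine obstacle: one should note that $\beta_0\in C^\alpha(\mathbb{S}^1_{2\pi})$ (indeed $C^{1+\alpha}$ here, though mere continuity suffices), so $\beta_0$ is Riemann integrable against the smooth weights $\cos(nu)$ and $\sin(nu)$, and the integration of $\partial_u X_0=\beta_0\mu_0$ over the period is legitimate because $X_0\in C^{1+\alpha}$ is genuinely $C^1$ and periodic. Thus the main step — and essentially the whole proof — is simply recognizing that the period-closure condition on $X_0$, read through the Frenet-type relation $\partial_u X_0=\beta_0\mu_0$ with $\mu_0$ in the normalized frame, is exactly the vanishing of the $n$-th Fourier mode of $\beta_0$. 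No PDE machinery is needed; this is a purely static compatibility condition satisfied by any closed $\ell$-convex Legendre curve in the form \eqref{n-rotation}.
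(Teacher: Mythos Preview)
Your proposal is correct and follows essentially the same argument as the paper: both use the closedness of $X_0$ together with $\partial_u X_0=\beta_0\mu_0$ and the normalized form of $\mu_0$ to conclude that $\int_{\mathbb{S}^1_{2\pi}}\beta_0(u)\cos(nu)\,du=\int_{\mathbb{S}^1_{2\pi}}\beta_0(u)\sin(nu)\,du=0$. The extra regularity remark you include is harmless and not needed for the argument, but otherwise the proofs coincide.
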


\begin{proof}
Let 
\[ \mu_0(u) := J\nu_0(u) = \begin{pmatrix}
\cos(nu) \\
\sin(nu)
\end{pmatrix} \quad \text{for} \; \; u \in \mathbb{S}^1_{2\pi}. \]
Since $X_0$ is a closed curve, we have 
\[ 0 = X_0(2\pi) - X_0(0) = \int_{\mathbb{S}^1_{2\pi}} \beta_0(u) \mu_0(u) \; du = \int_{\mathbb{S}^1_{2\pi}} \beta_0(u)  \begin{pmatrix}
\cos(nu) \\
\sin(nu)
\end{pmatrix} \; du. \]
Therefore, the Fourier coefficients 
\[ a_n = \int_{\mathbb{S}^1_{2\pi}} \beta_0(u) \cos(nu) \; du, \quad b_n = \int_{\mathbb{S}^1_{2\pi}} \beta_0(u) \sin(nu) \; du \]
are $0$. 
\end{proof}

In order to obtain the asymptotic behavior of the inverse curvature flow, we need to choice the center point $p$ of the scaling as stated in (ii) and (iii) in Theorem \ref{thm:asymptotics}. 
The following proposition will be used to choose the center point $p$. 

\begin{prop}
Let $(X, \nu)$ be the special inverse curvature flow satisfying \eqref{special inverse} and starting from $(X_0, \nu_0)$ obtained in Theorem \ref{thm:initial problem}. 
Then, 
\begin{equation}\label{preserve-int} 
\int_{\mathbb{S}^1_{2\pi}} X(u, t) \; du = \int_{\mathbb{S}^1_{2\pi}} X_0 (u) \; du \quad \text{for} \; \; t \ge 0. 
\end{equation}
\end{prop}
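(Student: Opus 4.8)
The plan is to differentiate the quantity $\int_{\mathbb{S}^1_{2\pi}} X(u,t)\,du$ with respect to $t$ and show the derivative vanishes, so that the integral is constant in time and equals its value at $t=0$. Since $(X,\nu)$ is the special inverse curvature flow, it satisfies \eqref{special inverse}, so that
\[ \partial_t X = \frac{\beta}{\ell}\nu + \frac{\partial_u \beta}{\ell^2}\mu \quad\text{on}\;\;\mathbb{S}^1_{2\pi}\times(0,\infty), \]
and moreover, because the initial curve is re-parametrized so that \eqref{initial-as} holds, Proposition \ref{prop:special-flow} (and its construction \eqref{construct-nu}--\eqref{construct-X}) tells us $\ell \equiv n$ and $\nu(u,t) = (\sin(nu), -\cos(nu))^{\mathrm{T}}$, $\mu(u,t) = (\cos(nu), \sin(nu))^{\mathrm{T}}$ are independent of $t$. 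Hence
\[ \partial_t X(u,t) = \frac{\beta(u,t)}{n}\nu(u) + \frac{\partial_u\beta(u,t)}{n^2}\mu(u). \]

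Next I would integrate this over $\mathbb{S}^1_{2\pi}$ and justify interchanging $\partial_t$ with $\int_{\mathbb{S}^1_{2\pi}}$, which is legitimate by the smoothness of $(X,\nu)$ on $\mathbb{S}^1_{2\pi}\times(0,\infty)$ and the uniform bounds on $\beta$ and its derivatives coming from the explicit representation \eqref{ex-form-l-beta}. The first term $\frac{1}{n}\int_{\mathbb{S}^1_{2\pi}} \beta(u,t)\nu(u)\,du$ equals $\frac{1}{n}\int_{\mathbb{S}^1_{2\pi}} \beta(u,t)(\sin(nu),-\cos(nu))^{\mathrm{T}}\,du$. Using the Fourier representation $\beta(u,t) = e^t a_0 + \sum_{k\ge 1} e^{(1-k^2/n^2)t}(a_k\cos(ku)+b_k\sin(ku))$ from \eqref{ex-form-l-beta}, the only modes that could survive integration against $\sin(nu)$ or $\cos(nu)$ are the $k=n$ modes, whose coefficients $a_n, b_n$ vanish by Proposition \ref{prop:fourier-coefficient}; hence the first term is zero. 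For the second term $\frac{1}{n^2}\int_{\mathbb{S}^1_{2\pi}} \partial_u\beta(u,t)\mu(u)\,du$, integrate by parts in $u$: since everything is $2\pi$-periodic there is no boundary term, and $\partial_u\mu(u) = -n\nu(u)$, so this term becomes $\frac{1}{n^2}\int_{\mathbb{S}^1_{2\pi}} \beta(u,t)\cdot n\,\nu(u)\,du = \frac{1}{n}\int_{\mathbb{S}^1_{2\pi}}\beta(u,t)\nu(u)\,du$, which is again zero by the same reasoning. Therefore $\partial_t \int_{\mathbb{S}^1_{2\pi}} X(u,t)\,du = 0$ for $t>0$, and by continuity of $X$ up to $t=0$ we conclude \eqref{preserve-int}.

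There is in fact no serious obstacle here; the computation is short. The one point that requires a little care is the interchange of differentiation and integration at $t=0$: one should argue that $t\mapsto \int_{\mathbb{S}^1_{2\pi}} X(u,t)\,du$ is continuous on $[0,\infty)$ (from $(X,\nu)\in C([0,\infty);C^1(\mathbb{S}^1_{2\pi};\mathbb{R}^2\times\mathbb{S}^1))$) and $C^1$ on $(0,\infty)$ with vanishing derivative, so it is constant on $[0,\infty)$. Alternatively—and perhaps more cleanly—one can bypass the time-derivative argument entirely: from the integral construction \eqref{construct-X}, $X(u,t) - X_0(u) = \int_0^t \big(\frac{\beta(u,\tau)}{n}\nu(u) + \frac{\partial_u\beta(u,\tau)}{n^2}\mu(u)\big)\,d\tau$, and integrating this identity over $u\in\mathbb{S}^1_{2\pi}$, applying Fubini and then the two vanishing-integral observations above for each fixed $\tau$, gives $\int_{\mathbb{S}^1_{2\pi}}(X(u,t)-X_0(u))\,du = 0$ directly. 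I would present the second route, as it makes the role of Proposition \ref{prop:fourier-coefficient} and the periodicity transparent and avoids any delicate justification of differentiating under the integral sign.
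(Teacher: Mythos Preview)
Your proof is correct and follows the same overall strategy as the paper: differentiate $\int_{\mathbb{S}^1_{2\pi}} X(u,t)\,du$ in time, integrate by parts to reduce everything to $\frac{2}{n}\int_{\mathbb{S}^1_{2\pi}} \beta(u,t)\,\nu(u)\,du$, and show this vanishes. The only substantive difference is in that last step: the paper observes directly that
\[
\int_{\mathbb{S}^1_{2\pi}} \beta\,\nu\,du \;=\; J^{-1}\!\int_{\mathbb{S}^1_{2\pi}} \beta\,\mu\,du \;=\; J^{-1}\!\int_{\mathbb{S}^1_{2\pi}} \partial_u X\,du \;=\; J^{-1}\bigl(X(2\pi,t)-X(0,t)\bigr) \;=\; 0
\]
using the closedness of $X(\cdot,t)$ itself, whereas you route through the explicit Fourier representation \eqref{ex-form-l-beta} together with Proposition~\ref{prop:fourier-coefficient}. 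Both arguments are valid and ultimately rest on the same fact (periodicity of the curve), but the paper's version is more self-contained and avoids invoking the Fourier expansion.
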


\begin{proof}
Let $(\ell, \beta)$ be the Legendre curvature of $(X, \nu)$ and $\mu:= J \nu$. 
Then, since $\ell \equiv n$ and $\nu$ is given by \eqref{construct-nu} for some $n \in \mathbb{N}$, we have 
\begin{align*} 
\partial_t \int_{\mathbb{S}^1_{2\pi}} X(u, t) \; du =&\; \int_{\mathbb{S}^1_{2\pi}} \frac{\beta(u,t)}{n} \nu(u, t) + \frac{\partial_u \beta(u, t)}{n^2} \mu(u, t) \; du \\
=&\; \int_{\mathbb{S}^1_{2\pi}} \partial_u \left( \frac{\beta(u,t)}{n^2} \mu(u, t) \right) \; du + \frac{2}{n} \int_{\mathbb{S}^1_{2\pi}} \beta (u, t) \nu(u,t) \; du \\
=&\; \frac{2}{n} J^{-1} \left( \int_{\mathbb{S}^1_{2\pi}} \beta(u, t) \mu(u,t) \; du \right) \\
=&\; \frac{2}{n} J^{-1} \left(X(2\pi, t) - X(0, t)\right) = 0 \quad \text{for} \; \; t > 0, 
\end{align*}
which yields \eqref{preserve-int}. 
\end{proof}

Let us prove Theorem \ref{thm:asymptotics}.

\begin{proof}[Proof of Theorem \ref{thm:asymptotics}]
The statement (i) follows from Proposition \ref{prop:fourier-coefficient}. 
We only prove the statement (iii) since the statement (ii) can be proved by a similar argument. 
Therefore, we assume $a_0 = \frac{1}{2\pi}\int_{\mathbb{S}^1_{2\pi}} \beta_0(u) \; du =0$ and there exists $m \in \mathbb{N}$ satisfying \eqref{as-fourier-coe}. 
Let $(\ell, \beta)$ be the Legendre curvature of $(X, \nu)$. 
Due to Proposition \ref{prop:existence-system}, $\beta$ is formed by 
\[ \beta(u, t) = \sum_{k=m}^\infty e^{(1-\frac{k^2}{n^2})t}(a_k \cos(ku) + b_k \sin(ku)) \quad \text{for} \; \; (u, t) \in \mathbb{S}^1_{2\pi} \times [0, \infty). \]
Let $(\tilde{X}, \tilde{\nu}) = (\lambda^*_{n, m}(t) X^*_{n,m,a_m, b_m}(u), \nu^*_n)$ be the self similar solution obtained in Theorem \ref{thm:self similar}. 
Then, the Legendre curvature $(\tilde{\ell}, \tilde{\beta})$ of $(\tilde{X}, \tilde{\nu})$ satisfies 
\[ \tilde{\ell}(u,t) = n, \; \; \tilde{\beta}(u, t) = \lambda^*_{n, m}(t) \beta^*_{n, m, a_m, b_m}(u) = e^{(1-\frac{m^2}{n^2})t} (a_m \cos(mu) + b_m \sin(mu)) \]
for $(u, t) \in \mathbb{S}^1_{2\pi} \times [0, \infty)$. 
Therefore, the Parseval's identity (cf.\ \cite[Section 16]{We}) yields
\[ |a_k| + |b_k| \le 2\left(\sum_{k=m}^\infty (a_k^2 + b_k^2)\right)^{1/2} = \frac{2}{\sqrt{\pi}} \|\beta_0\|_{L^2(\mathbb{S}^1_{2\pi})} = \frac{2}{\sqrt{\pi}} \left(\int_{\mathbb{S}^1_{2\pi}} (\beta_0(u))^2 \; du\right)^{1/2} \]
for $k \ge m,  \varepsilon \in (0, 1)$ and $\delta > 0$, and thus there exists $M_0 > 0$ such that 
\begin{align*} 
e^{-(1-\frac{m^2 + \varepsilon}{n^2})t} |\beta(u, t) - \tilde{\beta}(u, t)| =&\; \left|\sum_{k=m+1}^\infty e^{-\frac{k^2 - m^2 - \varepsilon}{n^2}t}(a_k \cos(ku) + b_k \sin(ku))\right| \\
\le&\; \frac{2\|\beta_0\|_{L^2(\mathbb{S}^1_{2\pi})}}{\sqrt{\pi}} \sum_{k=m+1}^\infty e^{-\frac{k^2 - m^2 - \varepsilon}{n^2}\delta} \le M_0
\end{align*}
for $(u, t) \in \mathbb{S}^1_{2\pi} \times [\delta, \infty)$. 
We thus obtain 
\begin{equation}\label{decay-beta}
\|\beta(\cdot, t) - \tilde{\beta}(\cdot, t)\|_{L^\infty(\mathbb{S}^1_{2\pi})} \le M_0 e^{(1-\frac{m^2+\varepsilon}{n^2})t} \quad \text{for} \; \; t \ge \delta. 
\end{equation}
A similar argument also yields that for any $i \in \mathbb{N}$ there exists $M_i > 0$ such that 
\begin{equation}\label{decay-beta-i}
\|\partial_u^i \beta(\cdot, t) - \partial_u^i \tilde{\beta}(\cdot, t)\|_{L^\infty(\mathbb{S}^1_{2\pi})} \le M_i e^{(1-\frac{m^2+\varepsilon}{n^2})t} \quad \text{for} \; \; t \ge \delta. 
\end{equation}

We here choose a point $p \in \mathbb{R}^2$ so that 
\[ p = \frac{1}{2\pi} \int_{\mathbb{S}^1_{2\pi}} X_0(u) \; du \]
and define 
\[ \hat{X}(u,t) := \frac{X(u, t) - p - \tilde{X}(u, t)}{\lambda^*(t)} = \frac{X(u,t) - p}{\lambda^*(t)} -X^*(u) \quad \text{for} \; \; (u, t) \in \mathbb{S}^1_{2\pi} \times [0, \infty). \]
Then, from $\int_{\mathbb{S}^1_{2\pi}} X^*(u) \; du = 0$, we have by \eqref{preserve-int}
\[ \int_{\mathbb{S}^1_{2\pi}} \hat{X}(u, t) \; du = 0 \quad \text{for} \; \; t \ge 0. \]
Therefore, the Poincar\'e inequality (cf.\ \cite[Section 5.8]{Ev}) and the Sobolev inequality (cf.\ \cite[Section 5.6]{Ev}) implies 
\begin{align*} 
\sup_{u \in \mathbb{S}^1_{2\pi}} |\hat{X}(u, t)| =:&\; \|\hat{X}(\cdot, t)\|_{L^\infty(\mathbb{S}^1_{2\pi})} \le C \|\partial_u \hat{X}(\cdot, t)\|_{L^2(\mathbb{S}^1_{2\pi})} \\
=&\; C \left(\int_{\mathbb{S}^1_{2\pi}} \frac{|\beta(u, t) - \tilde{\beta}(u, t)|^2}{(\lambda^*(t))^2} \; du \right) \le 2\pi C M_0 e^{-\frac{\varepsilon}{n^2} t}
\end{align*}
for $t \ge \delta$ and some $C > 0$. 
We here have used the estimate \eqref{decay-beta} and $\lambda^*(t) = e^{(1-\frac{m^2}{n^2})t}$ on the final inequality. 
Thus, the scaled curve $\frac{X(\cdot, t)-p}{\lambda^*(t)}$ converges to $X^*$ in $L^\infty(\mathbb{S}^1_{2\pi})$ as $t \to \infty$. 
The higher order estimate \eqref{decay-beta-i} yields the convergence in $C^i(\mathbb{S}^1_{2\pi})$ for any $i \in \mathbb{N}$, and thus it completes the proof. 
\end{proof}

\appendix

\section{Re-parametrization}

Although it is well-known that $\ell$-convex Legendre curves can be re-parametrized so that $\ell$ is a constant function, we give a proof of it for the convenience of the reader.

\begin{prop}\label{prop:re-para}
Let $\alpha \in (0, 1)$ and $(X, \nu) \in C^{1+\alpha} (\mathbb{S}^1_{2\pi}; \mathbb{R}^2 \times \mathbb{S}^1)$ be $\ell$-convex Legendre curve. 
Then, there exists an integer $n \in \mathbb{N}$ and a $C^1$-diffeomorphism $\phi \in C^{1+\alpha}(\mathbb{S}^1_{2\pi}; \mathbb{S}^1_{2\pi})$ satisfying $\partial_u \phi (u) > 0$ for $u \in \mathbb{S}^1_{2\pi}$ such that the re-parametrized Legendre curve $(\tilde{X}, \tilde{\nu}) := (X \circ \phi, \nu \circ \phi)$ satisfies 
\begin{equation}\label{re-para-nu} 
\tilde{\nu}(u) = \begin{pmatrix}
\sin (nu) \\
-\cos (nu) 
\end{pmatrix} \quad \text{for} \; \; u \in \mathbb{S}^1_{2\pi}. 
\end{equation}
Furthermore, the Legendre curvature $(\tilde{\ell}, \tilde{\beta})$ of $(\tilde{X}, \tilde{\nu})$ satisfies 
\begin{equation}\label{re-para-l} 
\tilde{\ell}(u) = n \quad \text{for} \; \; u \in \mathbb{S}^1_{2\pi}. 
\end{equation}
\end{prop}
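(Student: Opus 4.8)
The plan is to read off the reparametrization directly from the angle function of $\nu$. Since $(X,\nu)$ is $\ell$-convex, I may assume $\ell(u)>0$ for all $u\in\mathbb{S}^1_{2\pi}$. First I would produce a continuous lift $\Theta:\mathbb{R}\to\mathbb{R}$ of the angle of $\nu$, i.e.\ a continuous function with
\[ \nu(u)=\begin{pmatrix}\sin\Theta(u)\\ -\cos\Theta(u)\end{pmatrix} \quad \text{for} \; \; u\in\mathbb{R}, \qquad \Theta(u+2\pi)=\Theta(u)+2\pi n, \]
where the periodicity increment is necessarily an integer multiple $2\pi n$ of $2\pi$ because $\nu$ is $2\pi$-periodic as a map into $\mathbb{S}^1$. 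Differentiating $\partial_u\nu=(\partial_u\Theta)\mu$ and comparing with $\ell=\langle\partial_u\nu,\mu\rangle$ gives $\partial_u\Theta=\ell$, so $\Theta\in C^{1+\alpha}(\mathbb{R})$ is strictly increasing; in particular $2\pi n=\Theta(2\pi)-\Theta(0)=\int_0^{2\pi}\ell(u)\,du>0$, which forces $n\in\mathbb{N}$.

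Next I would invert $\Theta$. Because $\Theta$ is a strictly increasing $C^{1+\alpha}$ bijection of $\mathbb{R}$ with $\partial_u\Theta=\ell>0$, its inverse $\Theta^{-1}:\mathbb{R}\to\mathbb{R}$ is again $C^{1+\alpha}$ by the inverse function theorem (indeed $(\Theta^{-1})'=1/(\ell\circ\Theta^{-1})$ is a positive $C^\alpha$ function), and the periodicity of $\Theta$ translates into $\Theta^{-1}(s+2\pi n)=\Theta^{-1}(s)+2\pi$. Setting $\Phi(u):=\Theta^{-1}(nu)$ then gives $\Phi(u+2\pi)=\Phi(u)+2\pi$, so $\Phi$ descends to a map $\phi:\mathbb{S}^1_{2\pi}\to\mathbb{S}^1_{2\pi}$, and $\partial_u\Phi(u)=n/\ell(\Phi(u))>0$ shows that $\phi\in C^{1+\alpha}(\mathbb{S}^1_{2\pi};\mathbb{S}^1_{2\pi})$ is an orientation-preserving $C^1$-diffeomorphism with $\partial_u\phi>0$.

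Finally I would check the two conclusions for $(\tilde X,\tilde\nu):=(X\circ\phi,\nu\circ\phi)$. The chain rule gives $\langle\partial_u\tilde X,\tilde\nu\rangle=(\partial_u\phi)\,\langle(\partial_uX)\circ\phi,\,\nu\circ\phi\rangle=0$, so $(\tilde X,\tilde\nu)$ is again a Legendre curve, and it lies in $C^{1+\alpha}$ since composition of $C^{1+\alpha}$ maps preserves this class. By construction the angle function of $\tilde\nu$ is $\Theta\circ\Phi(u)=nu$, which is exactly \eqref{re-para-nu}. Then from $\tilde\nu(u)=(\sin(nu),-\cos(nu))$ one has $\partial_u\tilde\nu(u)=n(\cos(nu),\sin(nu))=n\,\tilde\mu(u)$, whence $\tilde\ell=\langle\partial_u\tilde\nu,\tilde\mu\rangle=n$, which is \eqref{re-para-l}.

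The argument is essentially topological/regularity bookkeeping rather than analysis: the only points requiring care are producing a globally defined continuous lift $\Theta$ and tracking its $2\pi$-periodicity so that $\phi$ is a genuine degree-one circle diffeomorphism, together with the $C^{1+\alpha}$ regularity of $\Theta^{-1}$. The $\ell$-convexity hypothesis enters precisely here, as it is what makes $\Theta$ strictly monotone (so that the inverse exists) and forces $n\ge 1$.
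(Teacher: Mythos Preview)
Your proof is correct and follows essentially the same approach as the paper: both introduce the angle lift $\Theta$ with $\partial_u\Theta=\ell>0$, identify the winding number $n$, and invert to produce the reparametrization. The only organizational difference is that the paper inverts $\psi_1(v)=\tfrac{1}{n}\int_0^v\ell$, which equals $\tfrac{1}{n}(\Theta(v)-\Theta(0))$ and therefore leaves a residual phase $\theta_0=\Theta(0)$ that must be removed by a further translation $\phi_2(u)=u-\theta_0/n$; by inverting $\Theta$ itself and setting $\phi(u)=\Theta^{-1}(nu)$ you absorb that phase automatically, so your version is slightly more streamlined but not a different idea.
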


\begin{proof}
Let $(\ell, \beta)$ be the Legendre curvature of $(X, \nu)$. 
We first choose an integer $n$. 
Let $\Theta \in C^{1+\alpha} ([0, 2\pi])$ be a function satisfying 
\[ \nu(u) = \begin{pmatrix}
\sin \Theta(u) \\
-\cos \Theta(u)
\end{pmatrix} \quad \text{for} \; \; u \in [0, 2\pi]. \]
Due to the periodicity of $\nu$, there exists an integer $n' \in \mathbb{N}$ such that 
\[ \Theta(2\pi) - \Theta(0) = 2n' \pi \]
and the integer $n$ is chosen as this integer $n'$. 
Since $\partial_u \Theta = \ell$ follows from a similar argument to obtain \eqref{eq-theta-u}, it holds that $\int_{\mathbb{S}^1} \ell(u) \; du = 2n \pi$. 

We next construct a re-parametrized Legendre curve $(\tilde{X}_1, \tilde{\nu}_1)$ satisfying $\tilde{\ell}_1(u) = n$ for $u \in \mathbb{S}^1$, where $(\tilde{\ell}_1, \tilde{\beta}_1)$ is the Legendre curvature of $(\tilde{X}_1, \tilde{\nu}_1)$. 
Define $\psi_1 \in C^{1+\alpha}(\mathbb{S}^1_{2\pi}; \mathbb{S}^1_{2\pi})$ by 
\[ \psi_1(v) := \frac{1}{n} \int_0^{v} \ell(v') \; dv' \quad \text{for} \; \; v \in \mathbb{S}^1. \]
We note that, due to the $\ell$-convexity of $(X, \nu)$, it holds that $\partial_v \psi_1(v) > 0$ for $v \in \mathbb{S}^1_{2\pi}$ and $\psi_1(2\pi) - \psi_1(0) = \frac{1}{n} \int_{\mathbb{S}^1_{2\pi}} \ell(v) \; dv = 2\pi$, which yields that $\psi_1: \mathbb{S}^1_{2\pi} \to \mathbb{S}^1_{2\pi}$ a is $C^1$-diffeomorphism. 
Therefore, the inverse function $\phi_1 \in C^{1+\alpha}(\mathbb{S}^1_{2\pi}; \mathbb{S}^1_{2\pi})$ of $\psi_1$ is well-defined and its derivative is 
\begin{equation}\label{posi-deri} 
\partial_u \phi_1(u) = \frac{n}{\ell\circ \phi_1(u)} \quad \text{for} \; \; u \in \mathbb{S}^1_{2\pi}. 
\end{equation}
Thus, letting $(\tilde{X}_1, \tilde{\nu}_1) := (X \circ \phi_1, \nu \circ \phi_1)$, we can see that 
\begin{equation}\label{re-para-l1} 
\tilde{\ell}_1(u) = \partial_u \phi_1(u) \ell\circ \phi_1(u) = n \quad \text{for} \; \; u \in \mathbb{S}^1_{2\pi}. 
\end{equation}
Notice that $\partial_u \tilde{\Theta}_1 = \tilde{\ell}_1$ yields 
\begin{equation}\label{re-para-nu1} 
\tilde{\nu}_1(u) = \begin{pmatrix}
\cos(nu + \theta_0) \\
-\sin(nu + \theta_0)
\end{pmatrix} \quad \text{for} \; \; u \in \mathbb{S}^1_{2\pi} 
\end{equation}
for some $\theta_0 \in \mathbb{R}$. 

We finally construct the function $\phi$ to define the re-parametrized Legendre curve $(\tilde{X}, \tilde{\nu})$. 
Define a $C^1$-diffeomorphism $\phi_2\in C^\infty(\mathbb{S}^1_{2\pi}; \mathbb{S}^1_{2\pi})$ by $\phi_2(u) := u - \frac{\theta_0}{n}$ and let $\phi := \phi_1 \circ \phi_2$. 
Then, from $(\tilde{X}, \tilde{\nu}) = (\tilde{X}_1 \circ \phi_2, \tilde{\nu}_1 \circ \phi_2)$, the properties \eqref{re-para-nu} and \eqref{re-para-l} follows from \eqref{re-para-l1} and \eqref{re-para-nu1}. 
The positivity of $\partial_u \phi$ follows from \eqref{posi-deri} and the $\ell$-convexity of $(X, \nu)$. 
Since $\phi_1$ and $\phi_2$ are $C^1$-diffeomorphisms, $\phi$ is also a $C^1$-diffeomorphism. 
\end{proof}

\bigskip

\noindent
{\bf Data availability}: There is no conflict of interest. 


\end{document}